\numberwithin{equation}{section}
\newcommand*{\rom}[1]{\expandafter\@slowromancap\romannumeral #1@}
\theoremstyle{plain}
\newtheorem{theorem}{Theorem}[section]
\newtheorem{lemma}[theorem]{Lemma}
\newtheorem{cor}[theorem]{Corollary}
\newtheorem{definition}[theorem]{Definition}
\definecolor{darkred}{rgb}{1, 0.1, 0.3}
\newcommand{\myparagraph}[1]	{{\vspace*{0.08in}\noindent{\bf #1~}}}
\newcommand {\mm}[1] {\ifmmode{#1}\else{\mbox{\(#1\)}}\fi}
\newcommand{\denselist}{\itemsep 0pt\parsep=1pt\partopsep 0pt}
\newcommand{\myER}			{{Erd\H{o}s--R\'enyi}}
\newcommand{\myprob}			{\mathbb{P}} %{\mathrm{P}}
\newcommand{\myProb}				{\mathbb{P}}
\newcommand{\R}				{\mathbb{R}}
\newcommand{\myWSP}		{{well-separated clique-partitions family}}
\newcommand{\cliqueP}		{{clique-partition}}
\newcommand{\Bconst}		{{Besicovitch constant}}
\newcommand{\myBC}		{{\beta}}
\newcommand{\aC}			{{\mathrm{C}}}
\newcommand{\aK}			{{\mathsf{K}}}
\newcommand{\myE}			{{\mathbb{E}}}
\newcommand{\aF}		{{\mathrm{F}}}
\newcommand{\aI}		{{\mathrm{I}}}
\newcommand{\aY}		{{\mathrm{Y}}}
\newcommand{\origin}    {\mathbf{o}}
\newcommand{\pp}    {{q}}
\newcommand{\qqq}    {{p}}
\newcommand\norm[1]{\left\lVert#1\right\rVert}
\newcommand{\mysetting}  {{the standard-setting-R}}
\DeclarePairedDelimiter\ceil{\lceil}{\rceil}
\DeclarePairedDelimiter\floor{\lfloor}{\rfloor}
\DeclareMathOperator*{\argmax}{arg\,max}
\begin{document}

\title{On the clique number of noisy random geometric graphs}

\author{Matthew Kahle}
\address{Department of Mathematics, The Ohio State University, USA}
\email{mkahle@math.osu.edu}

\author{Minghao Tian}
\address{Computer Science and Engineering Dept., The Ohio State University, USA}
\email{tian.394@osu.edu}

\author{Yusu Wang}
\address{Hal{\i}c{\i}o\u{g}lu Data Science Institute, University of California San Diego, San Diego, CA, USA}
\email{yusuwang@ucsd.edu}

\date{\today}
\thanks{This work is partially supported by National Science Foundation grants DMS-1547357, DMS-1352386, CCF-1740761, and RI-1815697.}

%\affil[1]{Department of Mathematics, The Ohio State University, USA}
%\texttt{mkahle@math.osu.edu}}
%\affil[2]{Computer Science and Engineering Dept., The Ohio State University, USA}
%\texttt{tian.394@osu.edu, yusu@cse.ohio-state.edu}}

%\yusu{(1) why is abstract at the end? (2) I moved ``Acknowledgement" to the end of the paper. }

%\minghao{I followed the author's guideline of Random Structures \& Algorithms. It says that every submitted paper should be arranged in such way. Check section 3 in: \url{https://onlinelibrary.wiley.com/page/journal/10982418/homepage/forauthors.html}. Anyway, I just moved the abstract back.}

%\subjclass[2010]{05C80}

%\keywords{clique number, random geometric graphs, well-separated clique-partitions family, generalised scan statistic}

\begin{abstract}
Let $G_n$ be a random geometric graph, and then for $\pp,\qqq \in [0,1)$ we construct a \emph{$(\pp,\qqq)$-perturbed noisy random geometric graph} $G_n^{\pp,\qqq}$ where each existing edge in $G_n$ is removed with probability $\pp$, while and each non-existent edge in $G_n$ is inserted with probability $\qqq$.  We give asymptotically tight bounds on the clique number $\omega\left(G_n^{\pp,\qqq}\right)$ for several regimes of parameter.
\end{abstract}

\maketitle

\section{Introduction and statement of results}

The \emph{random geometric graph} $G(\mathcal{X}_n; r) = G_{\mathbb{R}^d}(X_1, X_2, \cdots, X_n; r)$ has vertices $\mathcal{X}_n = \{X_1, X_2, \cdots, X_n\}$ where the $X_i$ are $d$-dimensional variables sampled from a common probability distribution $\nu$ on $\mathbb{R}^d$, and an edge $(X_i, X_j)$ is added whenever $X_i$ and $X_j$ are within Euclidean distance $r = r(n) > 0$ to each other. Often, only mild assumptions on the underlying probability distribution on $\mathbb{R}^d$ are required, for example a bounded, measurable, density function. See Penrose's monograph \cite{penrose2003random} for an overview of the subject.

Random geometric graphs are  useful in  applications, e.g.\ wireless networks, transportation networks \cites{nekovee2007worm,blanchard2008mathematical}), etc.
We are mainly interested here in adding noise to a random geometric graph, in the sense of randomly adding either long-range edges, deleting short-range edges, or both.
Such graphs also arise naturally in applications, e.g., in modeling biological epidemics and collective social processes \cite{taylor2015topological}.
See \cite{mahler2018analysis} for work on contagion dynamics on noisy geometric networks. 

%Random geometric graphs have formed a central fields in random graph theory for some decades, partly because of the challenging open problems and partly because of its wide applications in real world where the physical locations of objects involved play an important role (e.g., wireless networks, transportation networks \cites{nekovee2007worm,blanchard2008mathematical}). 

The \emph{clique number} of a graph $G$, denoted $\omega(G)$, is the order of the largest complete subgraph in $G$. Clique numbers of various models of random graphs are well studied. For example, it is a well-known result of Matula \cite{matula1976largest} and independently Bollob\'as and Erd\H{o}s \cite{BE1976} that the clique number of the Erd\H{o}s--R\'enyi random graph $G(n,p)$ is concentrated on at most two values.

Penrose \cite{penrose2003random}, M{\"u}ller \cite{muller2008two} and McDiarmid \cite{mcdiarmid2011chromatic} , among others, studied the clique number $\omega\left(G\left(\mathcal{X}_n; r\right)\right)$ of random geometric graphs $G(\mathcal{X}_n; r)$ for different ranges of $r$. For example, M{\"u}ller showed that the clique number $\omega\left(G(\mathcal{X}_n; r)\right)$ satisfies a similar two-point concentration if $nr^d = o(\log n)$ \cite{muller2008two}. In \cites{mcdiarmid2011chromatic}, McDiarmid and M\"uller compared clique number and chromatic number for random geometric graphs, and showed that there is a sharp threshold between these quantities being nearly equal and the chromatic number being much larger. They use methods which inspire some of our work here. The case of random geometric graphs when the dimension $d$ tends to infinity with the number of vertices $n$ has also been studied, for example, in \cite{devroye2011high}, and for $d$ growing quickly it is known that they are close in total variation distance to the Erd\H{o}s--R\'enyi model

Here we consider the clique number for noisy random geometric graphs, studied earlier in \cite{ParthasarathyST17} where they were called ``ER-perturbed random geometric graphs''. More precisely, we call $G^{\pp,\qqq}(\mathcal{X}_n; r)$ a \emph{$(\pp,\qqq)$-perturbed noisy random geometric graph} if a \emph{$(\pp,\qqq)$-perturbation} is added to a random geometric graph $G(\mathcal{X}_n; r)$; that is, each edge in $G(\mathcal{X}_n; r)$ is deleted with a uniform probability $\pp$, while each possible edge between two unconnected nodes $u, v$ is inserted independently, with a uniform probability $\qqq$. % 
See Chapter 6 in \cite{penrose2003random} for a comprehensive overview of clique numbers of random geometric graphs, including laws of large numbers. For random geometric graphs sampled from well-behaved probability distributions on metric spaces (e.g., from the uniform distribution on a cube $[0,1]^d$), the clique number and the maximum degree are of the same order of magnitude. We will see that this does not generally hold for noisy random geometric graphs.
 
\subsection{Some definitions and notation}\label{subsec:definitions_notations}
Before we state our main results, we first state some more precise definitions.

\begin{definition}[Random geometric graph \cite{mcdiarmid2011chromatic}]\label{def:RGG}
Given a sequence of independent random points $X_1, X_2, \cdots$ in $\R^d$ sampled from a common probability distribution $\nu$ with bounded density function $f$ (that is, for any Borel set $A \subseteq \mathbb{R}^d$, $\nu(A) = \int_A f(x) dx$), and a positive distance $r=r(n)>0$, we construct a random geometric graph $G(\mathcal{X}_n;r)$ with vertex set $\mathcal{X}_n = \{X_1, \cdots, X_n\}$, where distinct $X_i$ and $X_j$ are adjacent when $\norm{X_i - X_j}\leq r$. Here $\norm{\cdot} $ may be any norm on $\R^d$. 
%We also use notation $G(\mathcal{X}_n;r) = (V, E)$, where each $v_i$ in the vertex set $V = \{v_1, v_2, \cdots, v_n\}$ corresponds to the random point $X_i$, while an edge $(v_i, v_j)\in E$ if and only if $\norm{X_i - X_j} \le r$.
%We also use notation $G(\mathcal{X}_n;r) = (V, E)$ where $V = \{v_1, v_2, \cdots, v_n\}$ are the set of vertices which coincides with $\mathcal{X}$ and $E$ is the set of corresponding edges. 
%\minghao{Not sure whether it is fine or not. Here, capital $X_i$'s indicate that those are r.v. (standard notation from Penrose). But later, when we talk about the graph self, maybe using small letters like $u,v$ to refer the vertices seems more clear. Anyway, currently I use these two systems of notations for noisy-RGG in the paper.}
\end{definition}

Denote $\sigma$ as the essential supremum of the probability density function $f$ of $\nu$, that is
\begin{align*}
\sigma := \sup \left\{ t: \int_ {\left\{ y: f(y) > t \right\}} dx >0 \right\}.
\end{align*}
We call $\sigma$ the maximum density of $\nu$. Denote $B_s(x) := \{y \in \R^d: \norm{y-x} \leq s\}$ as the ball centered at $x \in \R^d$ of radius $s$. Also set $\theta = \int_{B_{1}(\origin)} dx$, where $\origin$ is the origin of $\R^d$; that is, $\theta$ is the volume of any radius-$1$ ball in $\mathbb{R}^d$. 
%\yusu{Where is $B(\cdot; \cdot)$ defined? Also, $B(0; 1)$ looks really weird. Also, earlier, you have $B_{r/2}$ -- remember to use consistent notations in a paper. } \minghao{done.}

We now introduce our \emph{$(\pp,\qqq)$-perturbed noisy random geometric graphs} $G^{\pp,\qqq}(\mathcal{X}_n; r)$.
\begin{definition}[$(\pp,\qqq)$-perturbed noisy random geometric graph]\label{def:pq_perturbed_RGG}
Given a random geometric graph $G(\mathcal{X}_n;r)$ as in Definition \ref{def:RGG}, the $(\pp,\qqq)$-perturbed noisy random geometric graph $G^{\pp,\qqq}(\mathcal{X}_n;r)$ is obtained by deleting each existing edge in $G(\mathcal{X}_n;r)$ independently with probability $\pp$ as well as inserting each non-existent edge in $G(\mathcal{X}_n;r)$ independently with probability $\qqq$. 
\end{definition}

Note that the order of applying the above two types of perturbations doesn't matter since they are applied to two disjoint sets respectively. This process can be applied to any graph, and we call it a \emph{($\pp,\qqq$)-perturbation}. The resulting graph $G^{\pp,\qqq}(\mathcal{X}_n;r)$ is called a \emph{($\pp,\qqq$)-perturbation of $G(\mathcal{X}_n;r)$}, or simply a \emph{noisy random geometric graph}. 

%Throughout this paper, we use the standard Bachmann-Landau notation (asymptotic notation). That is, for real valued functions $f(n)$ and $g(n)$, as $n\rightarrow \infty$, we say
%\yusu{Minghao, I have similar comments on the choice of $K$ -- usually it is either a set, or an integer, not a constant. Also, if it is constant, you should say explicitly below. } \minghao{fixed.}
%\yusu{ Minghao: can we change constant $K$ to sth. else? Weird to have $K$ as a constant. Can we just use $c$?} \minghao{fixed.}
%\begin{enumerate}
%\item $f(n) = O(g(n))$: $\exists$ two constants $c > 0$ and $n_0 \in \mathbb{N}$ such that $|f(n)| \leq cg(n)$ for all $n \geq n_0$;
%\item $f(n) = o(g(n))$: $\forall \epsilon > 0$, $\exists n_0 \in \mathbb{N}$ such that $|f(n)| < \epsilon g(n)$ for all $n \geq n_0$;
%\item $f(n) = \Omega(g(n))$: $\exists$ two constants $c > 0 $ and $n_0 \in \mathbb{N}$ such that $f(n) \geq c g(n)$ for all $n \geq n_0$;
%\item $f(n) = \Theta(g(n))$: $f(n) = O(g(n))$ and $f(n) = \Omega(g(n))$;
%\end{enumerate}

Throughout this paper, we use the standard Bachmann-Landau notation (asymptotic notation). That is, for real valued functions $f(n)$ and $g(n)$, as $n\rightarrow \infty$, we say
\begin{enumerate}
\item $f(n) = O(g(n))$ if there exist constants $c > 0$ and $n_0 \in \mathbb{N}$ such that $|f(n)| \leq cg(n)$ for all $n \geq n_0$;
\item $f(n) = o(g(n))$ if for every  $\epsilon > 0$, there exists $n_0 \in \mathbb{N}$ such that $|f(n)| < \epsilon g(n)$ for all $n \geq n_0$;
\item $f(n) = \Omega(g(n))$: if there exists constants $c > 0 $ and $n_0 \in \mathbb{N}$ such that $f(n) \geq c g(n)$ for all $n \geq n_0$;
\item $f(n) = \Theta(g(n))$ if $f(n) = O(g(n))$ and $f(n) = \Omega(g(n))$;
\end{enumerate}

We also use the notation $f \ll g$ to mean that $f(n)/g(n) \to 0$ as $n \to \infty$, $f \lesssim g$ to mean that there exists a constant $C > 0$ such that $f(n)/g(n) < C$ for all sufficiently large $n$, $f \gtrsim g$ to mean that there exists a constant $c > 0$ such that $f(n)/g(n) > c$ for all sufficiently large $n$, and $f \sim g$ to mean that $f \lesssim g$ and $f \gtrsim g$.

Recall that a \emph{clique} in any graph $G$ is a set of vertices which are pairwise connected. In this paper, we use the standard notation $\omega(G)$ in graph theory to denote the \emph{clique number} of $G$, which is the largest cardinality of a clique in $G$.

%From the classic results in random geometric graph theory, many 
Many properties of $G(\mathcal{X}_n;r)$ are qualitatively different depending on which distance $r=r(n)$ is chosen. In some sense, the distance $r$ here plays a role similar to the edge-inserting probability $p(n)$ in \myER{} random graphs $G(n, p)$. 
We consider the following three regimes for the quantity $nr^d$: 
\begin{enumerate}\denselist
\item[\rom{1}.] (\emph{subcritical}) $nr^{d} \leq n^{-\alpha}$ for some fixed $\alpha > 0$;
\item[\rom{2}.] (\emph{``critical or nearly critical'' or ``thermodynamic''}) $n^{-\epsilon} \ll nr^d \ll \log {n}$ for all $\epsilon > 0$;
\item[\rom{3}.] (\emph{``supercritical''}) $\sigma n r^d / \log n \rightarrow t \in (0, \infty)$;
\end{enumerate}
In continuum percolation it is more standard to reserve critical for the case $nr^d \to \lambda_c$ for some special constant $\lambda_c >0$, so our terminology may be slightly nonstandard.

We often use the terminology \emph{almost surely} (or a.s.): In particular, if $\xi_1, \xi_2, \cdots$ is a sequence of random variables and $k_1,k_2,\cdots$ is a sequence of positive numbers, then ``\emph{a.s. $\xi_n \geq k_n$}'' means that $\lim_{n \rightarrow \infty} \myprob[\xi_n \geq k_n] = 1$. The other direction \emph{a.s. $\xi_n \leq k_n$} is defined similarly. 
Moreover, \emph{a.s. $\xi_n \lesssim k_n$} means that there exist $C_1 > 0$ such that $\lim_{n \rightarrow \infty} \myprob[\xi_n \leq C_1 k_n] = 1$. Similarly, we define a.s. $\xi_n \gtrsim k_n$ and a.s. $\xi_n \sim k_n$. We also use the terminology \emph{with high probability} (or w.h.p.): Specifically, if $A_1, A_2, \cdots$ is a sequence of events, then ``$A_n$ happens with high probability'' means that $\lim_{n \rightarrow \infty} \myprob[A_n] = 1$.

\paragraph{\bf Assumptions and notations for the remainder of the paper.~}
In what follows, unless specified explicitly, we assume the following setting throughout, which we refer to as \emph{\mysetting}: 
\begin{itemize}\denselist
\item The space we consider is the $d$-dimensional Euclidean space $\R^d$ with a fixed dimension $d$, equipped with some arbitrary norm $\norm{\cdot}$ on $\R^d$. 

\item $\theta = \int_{B_1(\origin)}dx$ is the volume of the unit ball $B_1(\origin) = \{x \in \R^d: \norm{x} \leq 1\}$.

\item $\myBC$ is the so-called \Bconst{} of $(\mathbb{R}^d, \norm{\cdot})$ (see Section \ref{sec:well_separated_family}). 
\item $\nu$ is a probability distribution with finite maximum density $\sigma$; and 
$X_1, X_2, \cdots$ are independent random variables sampled from $\nu$.
\item $r = \left(r(1), r(2), \cdots\right)$ is a sequence of positive real numbers such that $r(n) \rightarrow 0$ as $n \rightarrow \infty$.
\item $\pp$ and $\qqq=\qqq(n)$ are real numbers between 0 and 1 (for simplicity, we only consider the case when $\pp$ is a fixed constant). %\yusu{So $p$ is constant, but $q$ may not be?} \minghao{Yes.} \yusu{Minghao: then you need to say it explicitly. In particular, remember later you have a place with $p\in (0,1)$ where $p$ may not be constant. This would cause confusion for reader. Always be precise.}  \minghao{fixed}
\item $G_n, G^{\pp,\qqq}_n$ denote the \emph{random geometric graph} $G(X_1, \cdots, X_n; r(n))$ and its \emph{$(\pp,\qqq)$-perturbation}, respectively. 
\end{itemize}
For any graph $G$, let $V(G)$ and $E(G)$ refer to its vertex set and edge set, and let $N_{G}(u)$ denote the set of neighbors of $u$ in $G$ (i.e. nodes connected to $u \in V(G)$ by edges in $E(G)$). For a subset $W \subseteq \mathbb{R}^d$, we denote the number of indices $i \in \{1,\cdots, n\}$ such that $X_i \in W$ by $\mathcal{N}(W) = \mathcal{N}_n(W)$; that is,  $\mathcal{N}(W)$ is the number of points from $\mathcal{X}_n = \{ X_1, \ldots, X_n\}$ contained in $W$.
%that is, $\mathcal{N}(W) = \sum\limits_{i=1}^n 1_W(X_i)$. 
%We denote the volume of $W$ by $vol(W)$. Finally let $\theta$ denote the volume of the unit ball $B_1(0)$; that is $\theta = vol(B_1(0))$. 

%everything is set in Euclidean space: a fixed positive integer $d$ and a fixed norm $\norm{\cdot}$ on $\mathbb{R}^d$ are given; $\myBC$ is the \Bconst{} of $(\mathbb{R}^d, \norm{\cdot})$ (see Section \ref{sec:well_separated_family}); $\nu$ is a probability distribution with finite maximum density $\sigma$; $X_1, X_2, \cdots$ are independent random variables sampled from $\nu$; $r = \{r(n)\}_n$ is a sequence of positive real numbers such that $r(n) \rightarrow 0$ as $n \rightarrow \infty$; $p$ and $q=q(n)$ are real numbers between 0 and 1 (for simplicity, we only consider the case when $p$ is a fixed constant); and for $n = 1, 2, \cdots,$ the notations $G_n, G^{p,q}_n$ denote the \emph{random geometric graph} $G(X_1, \cdots, X_n; r(n))$ and its \emph{$(p,q)-$perturbation}, respectively. For any graph $G$, let $V(G)$ and $E(G)$ refer to its vertex set and edge set, and let $N_{G}(u)$ denote the set of neighbors of $u$ in $G$ (i.e. nodes connected to $u \in V(G)$ by edges in $E(G)$). For some set $W \subseteq \mathbb{R}^d$, we denote the number of indices $i \in \{1,\cdots, n\}$ such that $X_i \in W$ by $\mathcal{N}(W) = \mathcal{N}_n(W)$; that is, $\mathcal{N}(W) = \sum\limits_{i=1}^n 1_W(X_i)$. We denote the volume of $W$ by $vol(W)$. Finally denote $\theta$ to be the volume of the unit ball $B_1(0)$; that is $\theta = vol(B_1(0))$. 

\subsection{Overview of main results}\label{sec:main_results}
We now state the main results of this paper, which concerns the behavior of clique number of $(\pp,\qqq)$-perturbed noisy random geometric graphs. 
To understand the behavior of $\pp$-deletion and $\qqq$-insertions (which have different effects on the clique numbers), we first separate the {\it insertion-only} case (where the perturbation only has random insertions) and the {\it deletion-only} case (where the perturbation only has random deletions), and present results for the two cases in Theorem \ref{thm:insertion_only_RGG} and \ref{thm:deletion_only_RGG}, respectively. 

\myparagraph{\bf Insertion only.~} 
We first consider the clique number of $G_n^{0,\qqq}$, where no edges in $G_n$ are removed and only new edges are added. The graph generated this way can be thought of as the union of a random geometric graph and an \myER{} random graph. Indeed, in Theorem \ref{thm:insertion_only_RGG} below, we show the interplay between those two random graphs as $\qqq = \qqq(n)$ increases in different regimes of $r$.
%Our main result in this part is the following theorem.

\begin{theorem}\label{thm:insertion_only_RGG}
Given a $(0,\qqq)$-perturbed noisy random geometric graph $G_n^{0,\qqq}$ in \mysetting{}, the following holds: 
%as in section \ref{subsec:definitions_notations}, the following hold.

\begin{enumerate}\denselist
\item[(\rom{1})] Suppose that $nr^d \leq n ^{-\alpha}$ for some fixed $\alpha \in \left( 0, 1 / \myBC^2 \right]$. Then there exist constants $C_1, C_2$ such that
\begin{itemize}\denselist
\item[(\rom{1}.a)] if $\qqq \leq \left(1 / n \right)^{C_1}$, then a.s.
$$
\omega\left(G_n^{0,\qqq}\right) \sim  1,$$
%\item $\exists$ a constant $0 < C_3 < 1$ such that if $\left(\frac{1}{n}\right)^{\xi} \ll q \leq C_3 < 1$ for all $\xi > 0$, then
\item[(\rom{1}.b)] and if $\left(1 / n\right)^{C_1} < \qqq \leq C_2$, then a.s.
$$
\omega\left(G_n^{0,\qqq}\right) \sim \log_{ 1/ \qqq}{n} .
$$
\end{itemize}

\item[(\rom{2})] Suppose that for every $\epsilon > 0$, $n^{-\epsilon} \ll nr^d \ll \log{n}$. Then there exist constants $C_1, C_2$ such that
\begin{itemize}\denselist
\item[(\rom{2}.a)] if $\qqq \leq \left(nr^d / \log{n} \right)^{C_1}$, then a.s.
$$\omega \left( G_n^{0,\qqq} \right)\sim \dfrac{\log{n}}{\log{( \log{n} / nr^d)} },$$
%~~~~\text{a.s.}
%\item $\exists$ a constant $0 < C_3 < 1$ such that if $\left(\frac{nr^d}{\log{n}}\right)^{\xi} \ll q \leq C_3 < 1$ for all $\xi > 0$, then

\item[(\rom{2}.b)] and if $\left( nr^d / \log n \right)^{C_1} < \qqq \leq C_2$, then a.s.
$$\omega\left(G_n^{0,\qqq}\right) \sim \log_{1 / \qqq}{n}.$$
\end{itemize}

\item[(\rom{3})] Suppose that $\sigma n r^d / \log n  \rightarrow t \in (0, \infty)$ as $n \to \infty$. Then
there exists a constant $C_1$  such that if $\qqq \leq C_1$, then a.s.
$$\omega\left(G_n^{0,\qqq}\right) \sim  nr^d. $$
\end{enumerate}
\end{theorem}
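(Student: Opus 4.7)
The overall approach is to observe that $G_n^{0,\qqq}$ is the union of two overlapping random processes on the vertex set $\{X_1,\ldots,X_n\}$: the random geometric graph $G_n$ itself, and an Erd\H{o}s--R\'enyi-type process that independently inserts each non-geometric pair with probability $\qqq$. Since $G_n^{0,\qqq}$ contains both as subgraphs, one immediately has
\[
  \omega(G_n^{0,\qqq}) \;\geq\; \max\bigl(\omega(G_n),\, \omega(G(n,\qqq))\bigr),
\]
where the second inequality requires only a brief stochastic-domination argument since insertions act on non-geometric pairs, and a.s.\ a $1-o(1)$ fraction of all pairs are non-geometric in regimes I and II (in regime III the geometric contribution already dominates, so this issue is moot).

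The seven sub-cases then reduce to identifying which of $\omega(G_n)$ or $\omega(G(n,\qqq))$ is larger. Plugging in the classical results---$\omega(G_n) \sim 1$, $\log n / \log(\log n/nr^d)$, and $nr^d$ in the subcritical, thermodynamic, and supercritical regimes respectively, due to Penrose \cite{penrose2003random} and McDiarmid--M\"uller \cite{mcdiarmid2011chromatic}---together with Matula's formula $\omega(G(n,\qqq)) \sim \log_{1/\qqq} n$, produces the stated lower bounds in each case. The constants $C_1, C_2$ are chosen precisely so that the threshold on $\qqq$ lies at the value where these two expressions cross.

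For the matching upper bound I would apply the first moment method to the count $N_k$ of $k$-cliques in $G_n^{0,\qqq}$. Writing $I_{ij} = \mathbf{1}[\|X_i - X_j\| \leq r]$, conditional on the positions each pair is independently an edge with probability $\qqq + (1-\qqq)I_{ij} \leq \qqq + I_{ij}$; expanding the product yields
\begin{align*}
  \myE[N_k] &\;\leq\; \binom{n}{k}\,\myE\!\!\prod_{1\leq i<j\leq k}\!\bigl(\qqq + I_{ij}\bigr) \\
  &\;=\; \binom{n}{k}\sum_{H \subseteq K_k}\!\qqq^{\binom{k}{2}-|E(H)|}\,\myProb\bigl(H \subseteq G_n[X_1,\ldots,X_k]\bigr).
\end{align*}
The two extremes of this sum are the purely ER term ($H = \emptyset$, contributing $\binom{n}{k}\qqq^{\binom{k}{2}}$) and the purely geometric term ($H = K_k$, contributing the expected number of $k$-cliques in $G_n$). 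Using the bounded-density hypothesis together with a spanning-tree argument one bounds $\myProb(H \subseteq G_n)$ by $(\sigma\theta r^d)^{k - c(H)}$ times combinatorial factors, where $c(H)$ is the number of connected components of $H$. One then chooses $k^\ast$ slightly above the claimed clique size and shows $\myE[N_{k^\ast}] = o(1)$, concluding by Markov's inequality.

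The main obstacle will be in regime II, where the threshold $\qqq \sim (nr^d/\log n)^{C_1}$ is precisely the point at which the purely-ER and purely-geometric contributions to $\myE[N_k]$ balance at $k \sim \log n / \log(\log n/nr^d)$. Below the threshold the geometric term dominates and the sum effectively reduces to the known RGG first-moment computation; above it the ER term dominates and one recovers Matula's bound. Showing that intermediate graphs $H$ never contribute more than the extremes requires a careful double count, grouping $H$ by number of components and edges and comparing each group's contribution to the geometric means of the two extremes via the ratio $(nr^d/\qqq)^{|E(H)|}$. Regimes I and III are considerably easier: in III the geometric clique itself has size $\sim nr^d$ which dwarfs any contribution from constant-$\qqq$ insertions, and in I the geometric graph is almost edgeless so the sum collapses to the pure ER calculation for essentially all $\qqq$.
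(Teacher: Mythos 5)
Your lower-bound reasoning is fine: $G_n^{0,\qqq}$ stochastically dominates both $G_n$ and $G(n,\qqq)$, so the classical RGG clique estimates together with a second-moment bound for ER (which the paper proves as Lemma~\ref{thm:ER_lowerbound}, since the textbook $2\log_{1/p}n$ statement needs care when $p=p(n)\to 0$) give the stated lower bounds.

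The upper bound, however, has a genuine gap. The relaxation $\qqq+(1-\qqq)I_{ij}\leq \qqq+I_{ij}$ silently discards the factor $(1-\qqq)^{|E(H)|}$ in the exact expansion
\begin{equation*}
\prod_{i<j}\bigl(\qqq+(1-\qqq)I_{ij}\bigr)=\sum_{H\subseteq K_k}(1-\qqq)^{|E(H)|}\qqq^{\binom{k}{2}-|E(H)|}\!\!\prod_{(i,j)\in E(H)}\!I_{ij},
\end{equation*}
and this is fatal when $\qqq$ is not $o(1/k^2)$. After the relaxation, the single-component contribution to $\sum_H\qqq^{\binom{k}{2}-|E(H)|}\myProb(H\subseteq G_n)$ is, with the spanning-forest bound $\myProb(H\subseteq G_n)\leq(\sigma\theta r^d)^{k-1}$, at least $(\sigma\theta r^d)^{k-1}\sum_{\text{conn. }H}\qqq^{\binom{k}{2}-|E(H)|}$, and the inner sum is $\sim(1+\qqq)^{\binom{k}{2}}$ (dense $H$ dominate). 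In parts (\rom{1}.b), (\rom{2}.b), (\rom{3}) one must take $k\sim\log_{1/\qqq}n$ with $\qqq$ bounded away from $0$, so this factor is $e^{\Theta((\log n)^2)}=n^{\Theta(\log n)}$ and swamps $\binom{n}{k}(\sigma\theta r^d)^{k-1}$ entirely; the proposed bound never becomes $o(1)$. Keeping the $(1-\qqq)^{|E(H)|}$ factor is essential: it turns the within-component sums into probabilities (namely $\myProb[G(b,1-\qqq)\text{ connected}]\leq 1$) and leads to the cleaner $\myE[N_k]\leq\binom{n}{k}\,\myE_{G\sim G(k,1-\qqq)}[(\sigma\theta r^d)^{\,k-c(G)}]$ with $c(G)$ the number of components — but your sketch does not do this, and the proposed grouping ``via the ratio $(nr^d/\qqq)^{|E(H)|}$'' does not recover it, so the double count you gesture at would not close.

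For comparison, the paper avoids a global first moment over $k$-subsets altogether. It splits cliques into those with all vertices inside a $3r$-ball (controlled by the scan statistic) and those containing a \emph{long edge}, and for the latter bounds the \emph{edge clique number} of each long edge by conditioning on the positions and invoking the Besicovitch-based \myWSP{} of Theorem~\ref{lem:BCLdoubling}. Inside each piece $P_i$ of that family, the RGG edges form pairwise $r$-separated cliques, so $G_n^{0,\qqq}|_{P_i}$ is \emph{exactly} a disjoint union of cliques with i.i.d.\ $\qqq$-insertions between them; a first moment in that structured graph (Lemma~\ref{lem:insertioncaseA_very_sparse} and its variants) finishes the job. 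This is a fundamentally different, and much tighter, decoupling of the two sources of randomness than the subgraph expansion you propose.
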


\medskip

 For this insertion-only case, one could view the graph $G_n^{0,\qqq}$ as the union of a random geometric graph $G_n$ and an \myER{} random graph $G(n, \qqq)$ on the same vertex set. The theorem above suggests that intuitively either the clique number from the random geometric graph or the one from the \myER{} random graph will dominate, depending on regimes of $nr^d$ and $\qqq$. 
 On the surface, this may not look surprising. 
However, in general, the clique number of the union $G = G_1 \cup G_2$ of two graphs $G_1$ and $G_2$ could be significantly larger than the clique number in each individual graph $G_i$: Consider for example $G_1$ is a collection of $\sqrt{n}$ disjoint cliques, each of size $\sqrt{n}$, while $G_2$ equals to the complement of $G_1$. The union $G_1 \cup G_2$ is the complete graph and the clique number is $n$. However, the clique number of $G_1$ or $G_2$ is $\sqrt{n}$. Our results suggest that due to the randomness in each of the individual graph we are considering, with high probability such a scenario will not happen and the two types of random graph do not interact strongly. 

To handle the mixture of random geometric graph with inserted edges, it is not clear how to use classical tools such as scan statistic. One of the key ideas in our paper is to develop and use what we call a \emph{a well-separated clique-partitions family} (see section \ref{sec:well_separated_family}) to help us to decouple the interaction between the two types of hidden random structures (i.e, random geometric graph, and the $(0,\qqq)$-perturbation). We believe that this idea is interesting for its own sake.

%To prove our technical results, \newchange{one of the key ideas in our approach is to develop and use what we call a \emph{a well-separated clique-partitions family}}
%we apply a novel approach using what we call a well-separated clique-partitions family 
%(see section \ref{sec:well_separated_family}) to help us to decouple the interaction between the two types of hidden random structures (i.e, random geometric graph, and the $(0,\qqq)$-perturbation).

%\subsubsection{Deletion only}\label{subsubsec:deletion_only}
\myparagraph{\bf Deletion only.~} We now present our main result for the clique number of $G_n^{\pp, 0}$, where we only delete edges in $G_n$ with a fixed edge-deletion probability $\pp$. We remark that technically speaking, the deletion-only case is easier to handle than the insertion-only case. 
%Our main result in this part is the following theorem.

\begin{theorem}\label{thm:deletion_only_RGG}
Given a $(\pp, 0)$-perturbed noisy random geometric graph $G_n^{\pp,0}$ in \mysetting{} with a fixed constant $0 < \pp < 1$, the following holds: 

\begin{enumerate}\denselist
\item[(\rom{1})] Suppose that $nr^d \leq n ^{-\alpha}$ for some fixed $\alpha > 0$. Then a.s.
\begin{align*}
\omega\left(G_n^{\pp,0}\right) \sim 1
\end{align*}
\item[(\rom{2})] Suppose that $n^{-\epsilon} \ll nr^d \ll \log{n}$ for all $\epsilon > 0$. Then a.s.
\begin{align*}
\omega\left(G_n^{\pp,0}\right) \sim \log \frac{\log n}{\log (\log n / nr^d)} 
\end{align*}
\item[(\rom{3})] Suppose that $\sigma n r^d / \log n \rightarrow t \in (0, \infty)$. Then a.s.
\begin{align*}
\omega\left(G_n^{\pp,0}\right) \lesssim  \log\left(nr^d\right)
\end{align*}
Furthermore, there exists a constant $T > 0$ such that if $\sigma nr^d \geq T\log n$, then a.s.
\begin{align*}
\omega\left(G_n^{\pp,0}\right) \sim \log\left(nr^d\right)
\end{align*}
\end{enumerate}
\end{theorem}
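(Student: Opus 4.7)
Throughout, write $m_n:=nr^d$, and exploit the observation that $G_n^{\pp,0}\subseteq G_n$, so every $\ell$-clique of $G_n^{\pp,0}$ is an $\ell$-clique of $G_n$ whose $\binom{\ell}{2}$ edges all survived the $\pp$-deletion. Part~(I) is a pure first-moment exercise: the expected number of $\ell$-cliques in $G_n$ is bounded by $\binom{n}{\ell}(\sigma\theta r^d)^{\ell-1}\le n(\sigma\theta m_n)^{\ell-1}/\ell!$ (fix one vertex and force the remaining $\ell-1$ into its $r$-ball), which under $m_n\le n^{-\alpha}$ tends to $0$ for any fixed integer $\ell>1+1/\alpha$. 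Hence $\omega(G_n^{\pp,0})\le \omega(G_n)<\ell$ almost surely, which together with the trivial $\omega(G_n^{\pp,0})\ge 1$ gives $\omega(G_n^{\pp,0})\sim 1$.

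For part~(II) set $k_n:=\log n/\log(\log n/m_n)$. The classical scan-statistic analysis (cf.\ Penrose, Chapter~6) gives $\omega(G_n)=(1+o(1))k_n$ almost surely, and in particular produces a clique $C\subseteq\mathcal{X}_n$ in $G_n$ with $|C|=(1-o(1))k_n$. For the lower bound on $\omega(G_n^{\pp,0})$, the induced subgraph of $G_n^{\pp,0}$ on such a $C$ is distributed as the Erd\H{o}s--R\'enyi graph $G((1-o(1))k_n,1-\pp)$, and the Matula--Bollob\'as--Erd\H{o}s two-point concentration theorem extracts a surviving clique of size $(2+o(1))\log k_n/\log(1/(1-\pp))\gtrsim\log k_n$. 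For the matching upper bound, every clique of $G_n^{\pp,0}$ lives inside some ball $B(X_i,r)$ (since it is a clique of $G_n$), and the induced subgraph of $G_n^{\pp,0}$ on $\mathcal{X}_n\cap B(X_i,r)$ is dominated by $G(k,1-\pp)$ with $k\le(1+o(1))k_n$; the standard tail bound $\myprob[\omega(G(k,1-\pp))\ge C\log k_n]\le \exp(-\Omega(\log^2 k_n))$ for a sufficiently large constant $C$ gives a strong per-ball estimate.

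The main obstacle for the upper bound in part~(II) is that a blunt union bound over the $n$ balls $\{B(X_i,r)\}_{i=1}^n$ is too weak: throughout regime~(II) one has $\log^2 k_n\ll \log n$, so the per-ball failure probability does not overcome the $n$-factor. The fix I would pursue is twofold: first, use Poisson-type concentration for the scan statistic to show that only $n^{o(1)}$ balls can simultaneously contain $(1-\eps_n)k_n$ or more sample points (for a carefully chosen $\eps_n\to 0$), reducing the effective union to a near-negligible number of ``really dense'' balls; second, exploit the heavy overlap of nearby balls by passing to a collection of well-separated representative balls, in the spirit of the clique-partition machinery developed for the insertion case, so as not to double-count dependent ball-cliques. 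The argument for part~(III) is structurally similar but considerably simpler: since $m_n=\Theta(\log n)$ the scan statistic is tightly concentrated at $\Theta(m_n)$, the per-ball Erd\H{o}s--R\'enyi analysis yields $\omega(G_n^{\pp,0})\lesssim \log(nr^d)$, and the matching lower bound under $\sigma nr^d\ge T\log n$ follows by producing via Chernoff a deterministic cell of diameter $\le r$ containing $\Theta(nr^d)$ points and invoking Matula--Bollob\'as--Erd\H{o}s inside it to extract a surviving clique of size $\sim\log(nr^d)$.
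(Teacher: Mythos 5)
Your Part~(I) first-moment count together with the observation $\omega(G_n^{\pp,0})\le\omega(G_n)$ is essentially the paper's scan-statistic argument, and the skeleton of your Part~(III) matches the paper's treatment. For the lower bounds in Parts~(II)--(III), the paper does not isolate a single densest ball: it Poissonizes and then (via Lemma~5.1) places $\Omega(r^{-d})$ pairwise-disjoint $r/2$-balls, shows that each independently fails to hold a surviving $\Theta(\log k_n)$-clique with probability at most $1-\tfrac{1}{2e}n^{-1/2}$, and multiplies these bounds. Your one-ball version can be made to work --- the edge survivals are independent of the vertex positions, so $G_n^{\pp,0}$ restricted to a clique $C$ of $G_n$ is exactly $G(|C|,1-\pp)$ --- but the product-over-disjoint-balls route gives a much stronger failure bound without having to track how the size of the densest ball concentrates.

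The genuine gap is the Part~(II) upper bound. You correctly diagnose that a blunt union bound over the $n$ balls fails because the per-ball estimate $\exp(-\Omega(\log^2 k_n))$ cannot beat the $n$-factor, but the proposed two-part fix does not close the gap. Stratifying by occupancy does not reduce the union to a negligible set: for any choice of $\eps_n\to0$ under which only $n^{\eps_n}$ balls hold at least $(1-\eps_n)k_n$ points, the balls holding roughly $(1-2\eps_n)k_n$ points number about $n^{2\eps_n}$ and still support Erd\H{o}s--R\'enyi cliques of size $(2-o(1))\log_{1/(1-\pp)}k_n$ with a non-negligible upper tail, and summing the contributions over all occupancy levels just reproduces the estimate you already rejected. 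The well-separated clique-partitions machinery from the insertion case does not help here either --- it controls how long-range inserted edges can fuse cliques in distant regions, not how many essentially independent dense $r$-balls there are. The paper takes a structurally different route in Section~5: it Poissonizes, fixes $y$, conditions on the Poisson count $|\mathbf{X}_y|$, and derives $\myprob[M_y\ge\ell]\le\frac{\lambda^{\ell}}{\ell!}(1-\pp)^{\binom{\ell}{2}}$, a bound which already integrates the Poisson occupancy tail; there is no explicit per-ball union at all, only a uniform-in-$y$ estimate followed by the choice of $\ell$ making this vanish. If you want to pursue the stratified union-bound route, you would need to carry out a full expected-$\ell$-clique count over the entire process simultaneously across all occupancy scales $m$, which is a genuinely different (and substantially more involved) calculation than the ``restrict to $n^{o(1)}$ dense balls'' sketch.
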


\paragraph{\bf Combined case.~}
The above {\it insertion-only} and {\it deletion-only} cases in fact represent key technical challenges. 
When there are both random insertions and deletions, we can derive some bounds on the clique number by simply combining the above results and some technical lemmas later in the paper together with the monotone property of clique number. For example, we have the following result when $nr^d$ is in the subcritical regime.
\begin{cor}\label{cor:combined_RGG}
Given a $(\pp,\qqq)$-perturbed noisy random geometric graph $G_n^{\pp,\qqq}$ in \mysetting{} with a fixed constant $0 < \pp < 1$ and suppose that $nr^d \leq n ^{-\alpha}$ for some fixed $\alpha \in \left( 0, 1 /\myBC^2\right]$, then there exist constants $C_1, C_2$ such that
\begin{itemize}\denselist
\item[a)] if $\qqq \leq \left(1/n\right)^{C_1}$, then a.s.
\begin{align*}
\omega\left(G_n^{\pp,\qqq}\right) \sim 1
\end{align*}
\item[b)] and if $\left(1 / n\right)^{C_1} < \qqq \leq C_2$, then a.s.
\begin{align*}
\omega\left(G_n^{\pp,\qqq}\right) \sim \log_{1 /\qqq}{n}
\end{align*} 
\end{itemize}
\end{cor}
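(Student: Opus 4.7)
The plan is to sandwich $G_n^{\pp,\qqq}$ between two graphs whose clique numbers we already understand, namely $G_n^{0,\qqq}$ for the upper bound and an embedded Erd\H{o}s--R\'enyi graph $G(n,\qqq)$ for the lower bound, and then invoke Theorem \ref{thm:insertion_only_RGG} together with the classical Matula / Bollob\'as--Erd\H{o}s clique-number theorem.

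For the upper bound, I would use the obvious pointwise coupling. Share the same $\qqq$-insertion coin for every non-$G_n$ edge across both models, and in addition use independent $\pp$-deletion coins only for $G_n^{\pp,\qqq}$. Under this coupling every edge present in $G_n^{\pp,\qqq}$ is also present in $G_n^{0,\qqq}$ (deletions can only remove edges, insertions are identical), so $G_n^{\pp,\qqq} \subseteq G_n^{0,\qqq}$ and hence $\omega(G_n^{\pp,\qqq}) \leq \omega(G_n^{0,\qqq})$. Applying Theorem \ref{thm:insertion_only_RGG}(\rom{1}) in the subcritical regime directly yields $\omega(G_n^{\pp,\qqq}) \lesssim 1$ in case (a) and $\omega(G_n^{\pp,\qqq}) \lesssim \log_{1/\qqq} n$ in case (b), with the same constants $C_1, C_2$ (up to constants).

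For the lower bound I would argue in two steps. Case (a) is trivial since any graph on at least one vertex has $\omega \geq 1$. For case (b), the key observation is that, conditional on the point positions $\mathcal{X}_n$, the marginal probability of any pair $(u,v)$ being an edge of $G_n^{\pp,\qqq}$ is either $1-\pp$ (if $\|X_u - X_v\| \leq r$) or $\qqq$ (otherwise), and both are at least $\qqq$ provided $\qqq \leq 1-\pp$. Choosing $C_2 \leq 1-\pp$ and using a standard Bernoulli coupling pair-by-pair, one embeds a copy of $G(n,\qqq)$ as a subgraph of $G_n^{\pp,\qqq}$, so $\omega(G_n^{\pp,\qqq}) \geq \omega(G(n,\qqq))$. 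It then remains to lower bound $\omega(G(n,\qqq))$ throughout the range $\qqq \in \bigl((1/n)^{C_1}, C_2\bigr]$; a first-moment computation on $\binom{n}{k}\qqq^{\binom{k}{2}}$ combined with the standard second-moment / Janson-type argument shows $\omega(G(n,\qqq)) \gtrsim \log_{1/\qqq} n$ almost surely in exactly this range (choosing $C_1$ small enough ensures $\log_{1/\qqq} n$ is bounded below so the second moment still controls the variance).

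The main obstacle is the lower bound in case (b), and more precisely its validity all the way down to $\qqq$ of polynomial order $n^{-C_1}$. One must verify that the classical Erd\H{o}s--R\'enyi clique-number lower bound survives when $\qqq$ is no longer constant but tends to zero polynomially; this is a routine but careful second-moment calculation and is the only place where the constant $C_1$ in the corollary must be chosen sufficiently small (relative to both $1/\myBC^2$ and $2$, the latter coming from the first-moment threshold $k \sim 2\log_{1/\qqq} n$ for cliques in $G(n,\qqq)$). Everything else in the proof is bookkeeping of couplings together with the monotonicity of $\omega$ under edge inclusion.
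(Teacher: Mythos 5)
Your proposal is correct and matches the paper's own proof in every essential respect. The upper bound by coupling $G_n^{\pp,\qqq}$ into $G_n^{0,\qqq}$ and invoking Theorem \ref{thm:insertion_only_RGG}(I) is exactly the monotonicity argument the paper records at the start of Section \ref{sec:combined_case}, and the lower bound by stochastically dominating an embedded $G(n,\qqq)$ (valid once $\qqq \le 1-\pp$, hence the need to cap $C_2$) is likewise how the paper proceeds. The ``main obstacle'' you flag --- justifying the Erd\H{o}s--R\'enyi clique lower bound when $\qqq$ decays polynomially in $n$ --- is real and is precisely what the paper's Lemma \ref{thm:ER_lowerbound} (proved in Appendix \ref{appendix:thm:ER_lowerbound} by a second-moment / $\Delta^*$ computation) supplies, giving $\omega(G(n,p)) > \lfloor\log_{1/p}n\rfloor$ a.s.\ for $(1/n)^{1/11}\le p\le (1/n)^{1/\sqrt[4]{n}}$; the paper then patches the remaining sliver of $\qqq$-values near $(1/n)^{C_1}$ by observing that $\log_{1/\qqq}n = \Theta(1)$ there, so the bound is vacuously true. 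Your proposal is therefore not a different route, just a slightly more compressed description of the same one.
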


The complete list of results can be found in Theorem \ref{thm:combined_RGG} of Section \ref{sec:combined_case}.  
%The complete result for all regimes can be found in Theorem \ref{thm:combined_RGG} in Section \ref{sec:combined_case}.

%\subsection{Preliminaries}
\section{Preliminaries and well-separated clique-partitions family} 
In this section, we first state in Section \ref{sec:standardtools} some existing results / tools that we will use frequently throughout the paper. 
We will then define in Section \ref{sec:well_separated_family} a new object called \emph{well-separated clique-partitions family} which we will need later in the arguments.  

\subsection{Some standard results and tools} 
\label{sec:standardtools}

For the proofs in this paper, we need some bounds on the binomial and Poisson distributions. 
\begin{lemma}[Lemma 3.6 in \cite{mcdiarmid2011chromatic}]\label{lem:binomial_upperbound}
Let $Z $ be either binomial or Poisson and $k \geq \mu := \myE[Z]$. Then
\begin{align*}
\left(\frac{\mu}{ek}\right)^k \leq \myprob[Z \geq k] \leq \left(\frac{e\mu}{k}\right)^k
\end{align*}
\end{lemma}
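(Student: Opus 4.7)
The statement comprises four inequalities: upper and lower bounds on $\mathbb{P}[Z\geq k]$ for both binomial and Poisson $Z$. The plan is to handle each by elementary tail-bound techniques, treating the two distributions in parallel. For the upper bounds, in the binomial case $Z\sim\mathrm{Bin}(n,p)$ (so $\mu=np$) I would use a union bound: the event $\{Z\geq k\}$ is contained in the union, over all $k$-subsets $S$ of trials, of the event that every trial in $S$ succeeds. This gives $\mathbb{P}[Z\geq k]\leq\binom{n}{k}p^k$, and combining with the standard estimate $\binom{n}{k}\leq(en/k)^k$ (which follows from the Stirling bound $k!\geq(k/e)^k$) yields $(e\mu/k)^k$. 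For the Poisson case I would use Chernoff: $\mathbb{P}[Z\geq k]\leq\inf_{t>0}e^{-tk}\mathbb{E}[e^{tZ}]=\inf_{t>0}e^{-tk+\mu(e^t-1)}$; the hypothesis $k\geq\mu$ guarantees that the optimizer $t=\ln(k/\mu)$ is nonnegative, and substituting gives $(\mu/k)^k e^{k-\mu}\leq(e\mu/k)^k$.

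For the two lower bounds I would discard all mass except at the single value $Z=k$, using $\mathbb{P}[Z\geq k]\geq\mathbb{P}[Z=k]$. In the Poisson case, $\mathbb{P}[Z=k]=e^{-\mu}\mu^k/k!\geq e^{-\mu}(\mu/k)^k$ from the crude bound $k!\leq k^k$, and the hypothesis $\mu\leq k$ gives $e^{-\mu}\geq e^{-k}$, so $\mathbb{P}[Z=k]\geq(\mu/(ek))^k$. In the binomial case, $\mathbb{P}[Z=k]=\binom{n}{k}p^k(1-p)^{n-k}$; I would use $\binom{n}{k}\geq(n/k)^k$ (each of the $k$ factors in $\prod_{i=0}^{k-1}\frac{n-i}{k-i}$ is at least $n/k$ whenever $n\geq k$) to reduce the task to showing $(1-p)^{n-k}\geq e^{-k}$.

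I expect this last step to be the only non-mechanical piece of the argument and thus the main obstacle. The plan is to use the elementary inequality $-\ln(1-p)\leq p/(1-p)$ for $0\leq p<1$, which yields $(n-k)\bigl(-\ln(1-p)\bigr)\leq(n-k)p/(1-p)$; it remains to verify $(n-k)p/(1-p)\leq k$, which rearranges cleanly to $np\leq k$ -- precisely the hypothesis $\mu\leq k$. Hence $(1-p)^{n-k}\geq e^{-k}$, and combining this with $(n/k)^k p^k=(\mu/k)^k$ completes the binomial lower bound. (One tacitly needs $k\leq n$ in the binomial case; otherwise $\mathbb{P}[Z\geq k]=0$ and the inequality is interpreted vacuously.)
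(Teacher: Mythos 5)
The paper supplies no proof of this lemma---it is cited verbatim as Lemma 3.6 of McDiarmid and M\"uller---so there is no ``paper proof'' against which to compare. Your argument is a correct and self-contained derivation of all four inequalities. The four pieces are each handled by the standard device: union bound over $k$-subsets plus $\binom{n}{k}\le(en/k)^k$ for the binomial upper bound, Chernoff with the optimal tilt $t=\ln(k/\mu)\ge 0$ for the Poisson upper bound, and the crude $\myprob[Z\ge k]\ge\myprob[Z=k]$ together with $k!\le k^k$ (Poisson) or $\binom{n}{k}\ge(n/k)^k$ (binomial) for the lower bounds. You correctly identify the one non-mechanical step, $(1-p)^{n-k}\ge e^{-k}$, and your resolution via $-\ln(1-p)\le p/(1-p)$ and the rearrangement $(n-k)p/(1-p)\le k\iff np\le k$ is right (noting $n\ge k$ so that multiplying by $n-k$ preserves the inequality, and $p<1$ so that clearing the denominator is valid; the boundary cases $p\in\{0,1\}$ and $k>n$ are degenerate as you observe). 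The proof is sound.
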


%\yusu{Minghao: is $p$ below constant? If so we should say ``$p\in (0, 1)$ constant", no? Or is it standard in the literature that $p\in (0,1)$ means that it is a constant? Also, make sure that we use this consistently throughout the paper.} 
\begin{lemma}[Chernoff--Hoeffding theorem \cite{penrose2003random}]\label{lem:chernoff_hoeffding}
Suppose $n \in \mathbb{N}, \alpha \in (0,1)$ and $0 < k < n$. Let $X \sim \text{Bin}(n, \alpha)$ be either a binomial random variable with mean $\mu = n\alpha$ or $X \sim Poisson(\mu)$ be a Poisson random variable with mean $\mu > 0$. %Then we have: 
%\begin{enumerate}
%\item[(\rom{1})] 
If $k \geq \mu$, then
%\begin{align*}
$$\myprob[X \geq k] \leq \exp \left( -\mu H\left(\frac{k}{\mu}\right) \right)$$ 
%\end{align*}
%\item[(\rom{2})] if $k \leq \mu$, then
%\begin{align*}
%$\myprob[X \leq k] \leq \exp \left( -\mu H\left(\frac{k}{\mu}\right) \right),$
%\end{align*}
%\end{enumerate}
where $H: [0, \infty] \rightarrow [0, \infty)$ is a function defined by $H(0)=1$ and 
%\begin{align*}
$H(a) = 1 -a + a \log a.$
%\end{align*}
\end{lemma}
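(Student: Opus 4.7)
The plan is to establish the inequality via the standard exponential Markov (Chernoff) argument, which handles both the binomial and Poisson cases in a unified way. For any $t > 0$, Markov's inequality applied to $e^{tX}$ gives $\myprob[X \geq k] = \myprob[e^{tX} \geq e^{tk}] \leq e^{-tk}\, \myE[e^{tX}]$, so the task reduces to bounding the moment generating function and then optimizing over the free parameter $t$.

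First I would compute $\myE[e^{tX}]$ in each case. For $X \sim \text{Poisson}(\mu)$, a direct power-series expansion gives $\myE[e^{tX}] = \exp(\mu(e^t - 1))$. For $X \sim \text{Bin}(n,\alpha)$, independence of the indicator summands yields $\myE[e^{tX}] = (1 - \alpha + \alpha e^t)^n$, and then the elementary inequality $1 + x \leq e^x$ applied with $x = \alpha(e^t - 1)$ produces the uniform upper bound $\myE[e^{tX}] \leq \exp(n\alpha(e^t - 1)) = \exp(\mu(e^t - 1))$. Thus in both cases, $\myprob[X \geq k] \leq \exp\bigl(-tk + \mu(e^t - 1)\bigr)$.

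Next I would optimize the right-hand side over $t > 0$. Differentiating the exponent with respect to $t$ and setting the result to zero yields $\mu e^t = k$, so $t^{\ast} = \log(k/\mu)$; by the hypothesis $k \geq \mu$ this optimizer is nonnegative and hence a legal choice in Markov's inequality. Substituting back, the exponent becomes $-k\log(k/\mu) + (k - \mu) = -\mu\bigl(\tfrac{k}{\mu}\log\tfrac{k}{\mu} - \tfrac{k}{\mu} + 1\bigr) = -\mu\, H(k/\mu)$, which is exactly the claimed bound.

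Since this is a classical textbook result taken verbatim from Penrose's monograph, I do not anticipate a genuine technical obstacle. The two subtle points are simply (i) verifying that the optimizer $t^{\ast}$ is admissible, which is precisely where the hypothesis $k \geq \mu$ is consumed, and (ii) interpreting the boundary value $H(0) = 1$, which is consistent with $\lim_{a \to 0^+}(1 - a + a\log a) = 1$ and with the trivial inequality at $k = 0$. The main conceptual observation driving the unified treatment is that the Poisson moment generating function dominates the binomial one, which is what permits a single exponent $\mu H(k/\mu)$ to serve both models simultaneously.
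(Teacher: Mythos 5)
The paper does not actually prove this lemma; it is cited directly from Penrose's monograph, so there is no in-paper proof to compare against. Your argument is the standard and correct Chernoff--Hoeffding derivation: the moment-generating-function bound $\myE[e^{tX}] \leq \exp(\mu(e^t-1))$ holds for both distributions (exactly for Poisson, via $1+x\le e^x$ for binomial), and optimizing $e^{-tk}\exp(\mu(e^t-1))$ at $t^*=\log(k/\mu)\ge 0$ yields precisely $\exp(-\mu H(k/\mu))$. The one place you are slightly imprecise is the remark about $H(0)$: under the stated hypotheses $k\ge\mu>0$ the argument $k/\mu$ never equals $0$, so the boundary convention $H(0)=1$ is not consumed anywhere in the proof; the relevant edge case is instead $k=\mu$, where $t^*=0$ and $H(1)=0$ recover the trivial bound $\myprob[X\ge\mu]\le 1$. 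This does not affect correctness.
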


One object we use frequently in our proofs is the following \emph{(generalised) scan statistics} defined in \cite{mcdiarmid2011chromatic}. Recall that for any set $U \subseteq \mathbb{R}^d$, $\mathcal{N}(U)$ is the number of points from $\mathcal{X}_n = \{ X_1, \ldots, X_n\}$ contained in $U$.
%\yusu{Minghao, I removed several your math environment (like ``align"): I think as long as they are clear, they dont have to always be in a separate line. Sometimes one may want them separate line as one may want to refer to it later. But otherwise, as long as the presentation / display look clear, there is no need to over use them. }
%\yusu{Minghao: are you sure that is the definition of $rW$? I thought $rW$ is the $r$-thickening of $W$: that is, $\{x\in \mathbb{R}^d \mid d(x, W) \le r\}$. For your definition, it depends on the relative position of $W$ to the origin, and the diameter of $rW$ can be arbitrarily bigger than $W$.} \minghao{The definition here is correct, although some results only holds for special $W$'s. Here $W$ can be any set even not connected. However, later we will only pick some good sets such as $W = B_1(\origin)$, so everything will be fine. I copied this definition from \cite{mcdiarmid2011chromatic}. Maybe we should only say $W$ instead of that $rW$?}
\begin{definition}[(Generalised) scan statistic \cite{mcdiarmid2011chromatic}]\label{def:generalised_scan_statistic}
For any set $W \subseteq \R^d$, we define $M_{W}$ by
%\begin{align*}
$M_W:= \max_{x \in \mathbb{R}^d}\mathcal{N}(x+rW)$ where $rW = \{rw: w \in W\}$ is the scaled set of $W$.
%\end{align*}
\end{definition}
In other words, $M_W$ is the maximum number of points in $\mathcal{X}_n$ in any translate of $rW$.

\subsection{Well-separated clique-partitions family}
\label{sec:well_separated_family}
This section discuss one main technique we will use to bound the clique number of $G_n^{\pp,\qqq}$ from above. The main challenge here is to disentangle two types of randomness --- the location of vertices and the $(\pp,\qqq)$-perturbation. %Unlike soft random geometric graphs, where the probability of connecting vertices $u$ and $v$ is a decreasing function of the distance $\norm{u-v}$ \cite{waxman1988routing},
In particular, our model allows vertices even far away to each other to become connected. To solve this, we develop a novel approach using what we call a \emph{well-separated clique-partitions family} (to be defined shortly) to help us to decouple the interaction between these two types of hidden random structures. 

To set up the stage, we first recall the \emph{Besicovitch covering lemma} which has a lot of applications in measure theory \cite{federer2014geometric}.  
\begin{definition}[Packings, covers, and partitions]\label{def:packings}

(1) A \emph{packing} is a countable collection $\mathcal{B}$ of \emph{pairwise disjoint} closed balls in $\R^d$. 
Such a collection $\mathcal{B}$ is a \emph{packing w.r.t.  a set $P\subset \mathbb{R}^d$} %for a subset $P \subset X$ 
if the centers of the balls in $\mathcal{B}$ lie in the set $P$, and it is a \emph{$\delta$-packing} if all of the balls in $\mathcal{B}$ have radius $\delta$. 

(2) A set $\{A_1, \ldots, A_\ell\}, A_i\subseteq \mathbb{R}^d$, \emph{covers $P$} if $P \subseteq \bigcup_i A_i$. 

(3) Given a set $A$, we say that $A$ is \emph{partitioned into} $A_1, A_2, \cdots, A_k$, if $A = A_1\cup \cdots \cup A_k$ and $A_i \cap A_j = \emptyset$ for any $i\neq j$. 
\end{definition}

\begin{lemma}[Besicovitch Covering Lemma \cite{heinonen2012lectures}]\label{thm:Besicovitch}
There exists a constant $\myBC = \myBC(d) \in \mathbb{N}$ such that for any set $P \subset \mathbb{R}^d$ and $\delta >0$,  there are $\myBC$ number of $\delta$-packings w.r.t. $P$, denoted by $\{\mathcal{B}_1, \cdots, \mathcal{B}_\myBC\}$, whose union also covers $P$.
\end{lemma}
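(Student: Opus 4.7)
The plan is to give a self-contained argument tailored to the equal-radius case rather than invoke the full Besicovitch covering theorem. The two objectives are (a) to select a subset $N\subseteq P$ whose $\delta$-balls already cover $P$, and then (b) to partition $N$ into $\myBC$ classes, each of which is pairwise at distance strictly greater than $2\delta$, so that the corresponding collections of $\delta$-balls form disjoint packings whose union still covers $P$.

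First, I would use Zorn's lemma to extract a maximal subset $N\subseteq P$ having the property that any two distinct points of $N$ are at distance strictly greater than $\delta$. By maximality, every $p\in P\setminus N$ lies within distance $\delta$ of some $n\in N$, so the family $\{B(n,\delta):n\in N\}$ already covers $P$. A standard separability argument (any $\delta$-separated subset of $\mathbb{R}^d$ is countable) ensures that $N$ is countable, as required by the definition of a packing.

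Second, I would form the ``conflict graph'' $H$ on vertex set $N$, placing an edge between $n$ and $n'$ iff $\|n-n'\|\le 2\delta$ (precisely the condition that $B(n,\delta)\cap B(n',\delta)\ne\emptyset$). Any proper vertex-coloring of $H$ with $k$ colors yields color classes $N_1,\ldots,N_k$ whose associated families $\mathcal{B}_i:=\{B(n,\delta):n\in N_i\}$ are $\delta$-packings with centers in $P$; and since $N=N_1\cup\cdots\cup N_k$ continues to $\delta$-cover $P$, the union of the $\mathcal{B}_i$'s covers $P$ as well.

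The key quantitative step, and the only real work, is to bound $\chi(H)$ by a constant depending only on $d$ and the norm. Fix $n\in N$; every neighbor $n'$ lies in $B(n,2\delta)$, and any two distinct points of $N$ are at distance greater than $\delta$, so the half-radius balls $\{B(n',\delta/2):n'\sim n\}$ are pairwise disjoint and all contained in $B(n,5\delta/2)$. A volume comparison in the fixed norm gives at most $\theta(5\delta/2)^d / \theta(\delta/2)^d = 5^d$ such balls, so the degree of every vertex of $H$ is at most $5^d$. A greedy coloring (extended transfinitely when $H$ is infinite, using that bounded-degree graphs always admit a proper $(\Delta{+}1)$-coloring) then succeeds with $5^d+1$ colors, so we may take $\myBC:=5^d+1$. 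The crude constant clearly depends only on $d$ and on the norm through its packing constant, both of which are fixed in \mysetting{}; obtaining the sharp value of $\myBC(d)$ is the classical content of the Besicovitch lemma but is not needed for our applications.
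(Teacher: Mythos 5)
Your argument is correct, and it is genuinely different in spirit from what the paper does: the paper simply cites \cite{heinonen2012lectures} for the general Besicovitch covering theorem, whereas you give a short self-contained proof tailored to the one feature actually used here, namely that all balls share a common radius $\delta$. The general theorem allows a different radius at each point of $P$, and its proof requires a non-trivial ``satellite'' counting argument to control how many large balls can intersect a fixed small one. In the equal-radius setting that difficulty evaporates: your conflict graph $H$ has bounded degree (the volume comparison $\theta(5\delta/2)^d/\theta(\delta/2)^d = 5^d$ is valid for any norm since $\theta$ cancels, so the bound is norm-independent), and a greedy $(\Delta+1)$-coloring over the countable vertex set $N$ (enumerate $N$ and color in order; no transfinite machinery is needed once you have observed that a $\delta$-separated subset of the separable space $\mathbb{R}^d$ is countable) gives a valid $\myBC = 5^d + 1$. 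Two small points: your parenthetical that the constant depends ``on the norm through its packing constant'' is unnecessarily cautious---the $\theta$'s cancel and $5^d+1$ is already norm-free; and the constant you obtain is far from the sharp Besicovitch constant, but the paper only requires existence of \emph{some} $\myBC(d)$, so this is immaterial. What your approach buys is a transparent, elementary proof of exactly the statement needed; what it gives up is the full strength of Besicovitch (varying radii) and the sharper constants that a more careful argument would yield, neither of which is needed in this paper.
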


We call the constant $\myBC$ above the \emph{Besicovitch constant}. Note that $\myBC$ depends only on the dimension $d$ and is {\bf not} dependent of $\delta$. 
%$\delta$ represents the scale where events are allowed to take place, and it could be large.
%Given a set $A$, we say that $A$ is \emph{partitioned into} $A_1, A_2, \cdots, A_k$, if $A = A_1\cup \cdots \cup A_k$ and $A_i \cap A_j = \emptyset$ for any $i\neq j$. 

%\yusu{Minghao, it is a little confusing as below, vertex set for $G_n$ is $V$, while when we define it, we used $X_n$. I assume this would happen later too. We need a consistent way.} 
\begin{definition}[Well-separated clique-partitions family]\label{def:wellseparated}
Given a geometric graph $G^*$ in $(\mathbb{R}^d, \norm{\cdot})$ with vertex set $V$ and edge set $E$, a family $\mathcal{P} = \{P_i\}_{i \in \Lambda}$, where $P_i\subseteq V$ and $\Lambda$ is the index set of $P_i$s, forms a \emph{\myWSP}{} of $G^*$ if: 
\begin{enumerate}\denselist
\item $V = \cup_{i \in \Lambda} P_{i}$.
\item $\forall i \in \Lambda$, $P_i$ can be partitioned as $P_i= C_1^{(i)} \sqcup C_2^{(i)}\sqcup \cdots \sqcup C_{m_{i}}^{(i)}$ where
\begin{enumerate}
\item[(2-a)] $\forall j \in [1, m_i]$, there exist $\bar{v}^{(i)}_j \in V$ such that $C_j^{(i)} \subseteq B_{r/2}\left(\bar{v}^{(i)}_j\right) \cap V$. 
\item[(2-b)] For any $j_1, j_2 \in [1, m_{i}]$ with $j_1 \neq j_2$, $d_H\left(C_{j_1}^{(i)}, C_{j_2}^{(i)}\right) >r$, where $d_H$ is the Hausdorff distance between two sets in $\mathbb{R}^d$ with respect to norm $\norm{\cdot}$.
\end{enumerate}
\end{enumerate} 
We also call $C_1^{(i)} \sqcup C_2^{(i)}\sqcup \cdots \sqcup C_{m_{i}}^{(i)}$ a \emph{\cliqueP{} of $P_i$ (w.r.t. $G^*$)}, and its \emph{size} (cardinality) is $m_i$. 
The \emph{size} of the \myWSP{} $\mathcal{P}$ is its cardinality $|\mathcal{P}| = |\Lambda|$. 
\end{definition}

In the above definition, (2-a) implies that each $C_{j}^{(i)}$ spans a clique in the geometric graph $G^*$; thus we call $C_j^{(i)}$ as a \emph{clique} in $P_i$ and $C_1^{(i)} \sqcup C_2^{(i)}\sqcup \cdots \sqcup C_{m_{i}}^{(i)}$ a \cliqueP{} of $P_i$. (2-b) means that there are no edges in $G^*$ between any two cliques of $P_{i}$. As a result, any edge in its corresponding $(\pp,\qqq)$-perturbation $G^{\pp,\qqq}_n$ between such cliques must come from $(\pp,\qqq)$-perturbation ($\qqq$-insertion). We will leverage this fact significantly later when bounding clique numbers. See figure \ref{fig:wellseparated}. 

\begin{figure}[htbp]
\centering
\includegraphics[height=6cm]{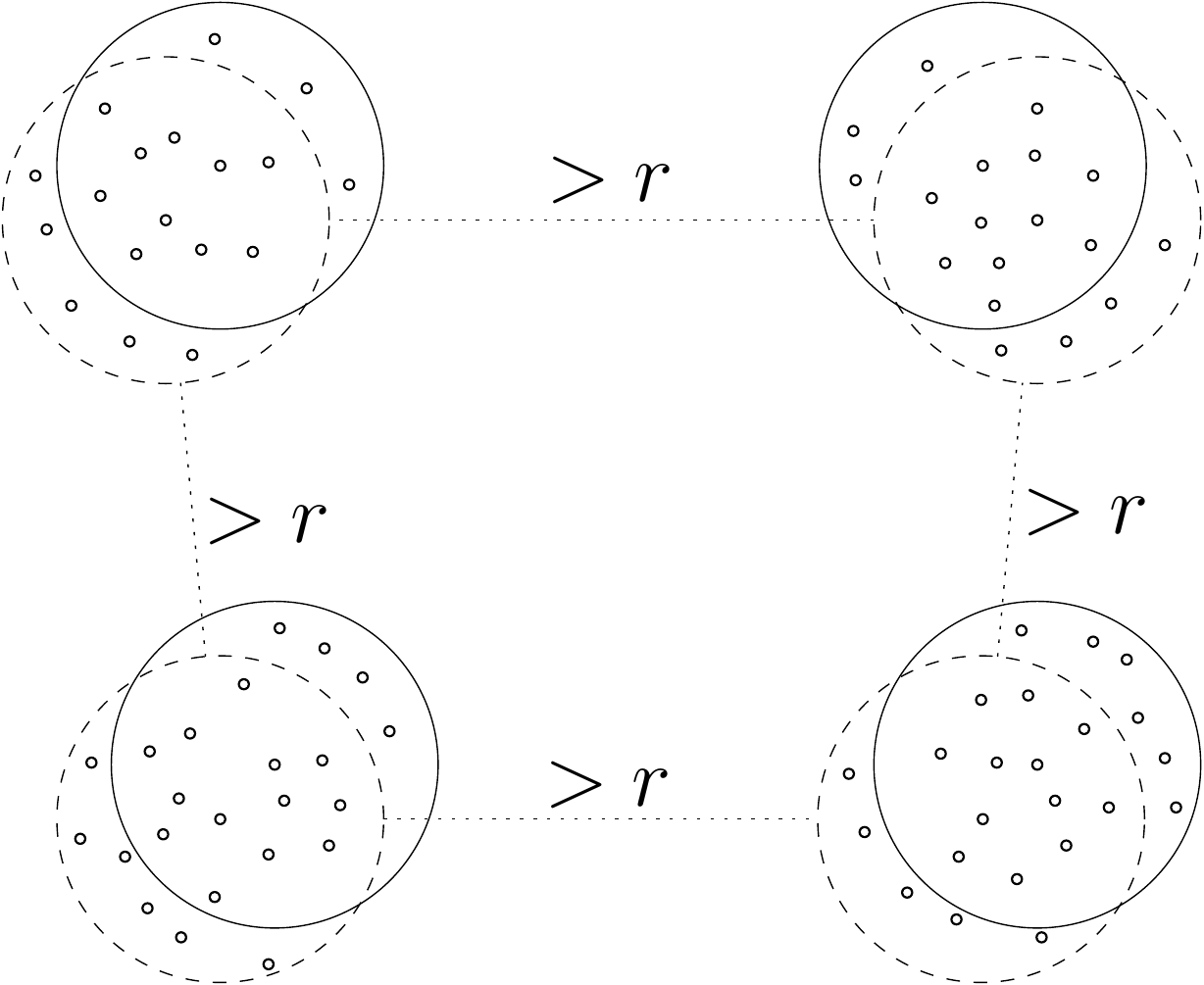}
\caption{Points in the solid balls are $P_1$, and those in dashed balls are $P_2$. Each adapts a \cliqueP{} of size $m_1 = m_2 = 4$. 
Assuming that all nodes in $G^*$ are shown in this figure, then $\mathcal{P} = \{P_1, P_2\}$ forms a \myWSP{} of $G^*$.}
\label{fig:wellseparated}
\end{figure}

We have the following existence theorem of a well-separated clique-partitions family with constant size only depending on the dimension $d$.

\begin{theorem}\label{lem:BCLdoubling}
There exists a \myWSP{} $\mathcal{P}=\{P_i\}_{i \in \Lambda}$ of any geometric graph $G^*$ with $|\Lambda| \leq \myBC^2$, where $\myBC = \myBC(d)$ is the \Bconst{} of $\mathbb{R}^d$. 
\end{theorem}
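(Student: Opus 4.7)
The plan is to apply the Besicovitch Covering Lemma (Lemma~\ref{thm:Besicovitch}) \emph{twice}, cascading it in a way that reconciles two incompatible scales. A single application does not suffice: condition (2-a) forces each clique $C_j^{(i)}$ to lie inside a radius-$r/2$ ball (so the covering must use packing radius $r/2$), while condition (2-b), combined with the triangle-inequality bound $d_H(C_{j_1}^{(i)},C_{j_2}^{(i)})\geq \|\bar v_{j_1}^{(i)}-\bar v_{j_2}^{(i)}\|-r$, requires the clique ``centers'' inside a common $P_i$ to be pairwise more than $2r$ apart. These two requirements cannot be met by one Besicovitch packing of any single radius.

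First I would apply Besicovitch to $V=V(\tG)$ with $\delta=r/2$, obtaining $\myBC$ packings $\mathcal{B}_1,\ldots,\mathcal{B}_{\myBC}$ of closed $r/2$-balls centered at points of $V$ whose union covers $V$. Let $S_k$ be the center set of $\mathcal{B}_k$; since the balls of $\mathcal{B}_k$ are pairwise disjoint, each $S_k$ is strictly $r$-separated. Assigning every $v$ to the smallest $k$ for which $v$ lies in some ball of $\mathcal{B}_k$ yields a partition $V=V_1\sqcup\cdots\sqcup V_{\myBC}$; within $V_k$, each vertex lies in a unique $B_{r/2}(s)$ with $s\in S_k$ (by disjointness), which gives a clique-partition $V_k=\bigsqcup_{s\in S_k}T_s$ with $T_s\subseteq B_{r/2}(s)\cap V$. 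Second, for each $k$ I would apply Besicovitch to $S_k$ with $\delta=r$, obtaining $\myBC$ packings of $r$-balls centered in $S_k$; the resulting center subsets $S_{k,1},\ldots,S_{k,\myBC}$ are strictly $2r$-separated. The key observation is that $S_k$ is already $r$-separated, so whenever the covering places some $s\in S_k$ in a ball $B_r(s')$ with $s'\in S_{k,l}$, we must have $s=s'$. Hence $\bigcup_l S_{k,l}=S_k$, and a tiebreak refines this into a genuine partition $S_k=\bigsqcup_l S_{k,l}$.

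I would then set $P_{k,l}=\bigsqcup_{s\in S_{k,l}}T_s$ for $(k,l)\in\{1,\ldots,\myBC\}^2$, producing a family of size at most $\myBC^2$. Condition (1) is immediate since the $P_{k,l}$ partition $V$; condition (2-a) holds with $\bar v_s=s$ because $T_s\subseteq B_{r/2}(s)$; and for distinct $s_1,s_2\in S_{k,l}$, combining $\|s_1-s_2\|>2r$ with $T_{s_i}\subseteq B_{r/2}(s_i)$ yields $\|a-b\|>r$ for all $a\in T_{s_1},b\in T_{s_2}$, so $d_H(T_{s_1},T_{s_2})>r$, which is (2-b). The principal conceptual obstacle is the scale mismatch between $r/2$ (needed for cliques) and $2r$ (needed for separation), and the step that dissolves it is the one where the $r$-separation of $S_k$ forces the second Besicovitch cover to land exactly on the centers of $S_k$; without this collapse, vertices would be reassigned away from their clique centers and (2-a) would break.
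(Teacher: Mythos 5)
Your proposal is correct and follows essentially the same approach as the paper's proof: a double application of the Besicovitch covering lemma, first at scale $r/2$ (so centers within a packing are $r$-separated) and then at scale $r$ to each center set (exploiting the $r$-separation to force the second cover to collapse onto the centers, yielding $2r$-separated subsets and hence $r$-separated cliques after the $r/2$-ball inflation). The only cosmetic difference is that you tiebreak to obtain a genuine partition of $V$ into the $P_{k,l}$, whereas the paper is content with a cover as Definition~\ref{def:wellseparated}(1) only asks for $V=\cup_i P_i$; this does not change the argument or the bound $|\Lambda|\le\myBC^2$.
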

\begin{proof}%[Proof of Theorem \ref{lem:BCLdoubling}]
%\yusu{Minghao: $X$ should be changed to $\mathbb{R}^d$ throughout the paper, if we have defined to use $\mathbb{R}^d$, no? We should remark in Concluding remarks that this can be extended. (I changed in this proof, but please go over the paper and change in other places.)} \minghao{fixed.}
To prove the theorem, first imagine we grow an $r/2$-ball around each node in $V \subset \mathbb{R}^d$ (the vertex set of $G^*$). 
By the Besicovitch Covering Lemma (Lemma \ref{thm:Besicovitch}), we have a family of $(r/2)$-packings w.r.t. $V$, $\mathcal{B} = \{\mathcal{B}_1, \mathcal{B}_2, \cdots, \mathcal{B}_{\alpha_1}\}$, whose union covers $V$. Here, the constant $\alpha_1$ satisfies $\alpha_1 \le \myBC(d)$. 

Each $\mathcal{B}_i$ contains a collection of disjoint $r/2$-balls centered at a subset of nodes in $V$, and let $V_i \subseteq V$ denote the centers of these balls. For any $u, v \in V_i$, we have $\norm{u-v} > r$ as otherwise, $B_{r/2}(u) \cap B_{r/2}(v) \neq \emptyset$ meaning that the $r/2$-balls in $\mathcal{B}_i$ are not all pairwise disjoint. 
Now consider the collection of $r$-balls centered at all nodes in $V_i$. 
Applying Besicovitch Covering Lemma to $V_i$ again with $\delta = r$, we now obtain a family of $r$-packings w.r.t. $V_i$, denoted by $\mathcal{D}^{(i)} = \mathcal{D}_1^{(i)} \sqcup \cdots \sqcup \mathcal{D}_{\alpha_2^{(i)}}^{(i)}$, whose union covers $V_i$. Here, the constant $\alpha_2^{(i)}$ satisfies $\alpha_2^{(i)}\le \myBC(d)$ for each $i \in [1, \alpha_1]$. 

Now each $\mathcal{D}_{j}^{(i)}$ contains a set of disjoint $r$-balls centered at a subset of nodes $V_j^{(i)} \subseteq V_i$ of $V_i$. 
First, we claim that $\bigcup_j V_j^{(i)} = V_i$. This is because that $\mathcal{B}_i$ is an $r/2$-packing which implies that $\norm{u-v} > r$ for any two nodes $u, v \in V_i$. In other words, the $r$-ball around any node from $V_i$ contains no other nodes in $V_i$. 
As the union of $r$-balls $ \mathcal{D}_1^{(i)} \sqcup \cdots \sqcup \mathcal{D}_{c_2^{(i)}}^{(i)}$ covers $V_i$ by construction, it is then necessary that each node $V_i$ has to appear as the center in at least one $\mathcal{D}_j^{(i)}$ (i.e, in $V_j^{(i)}$). Hence $\bigcup_j V_j^{(i)} = V_i$.

Now for each vertex set $V_j^{(i)}$, let $P_j^{(i)} \subseteq V$ denote all points from $V$ contained in the $r/2$-balls centered at points in $V_j^{(i)}$. As $\bigcup_j V_j^{(i)} = V_i$, we have $\bigcup_j P_j^{(i)} = \bigcup_{v\in V_i} \left(B_{r/2}(v) \cap V\right)$. It then follows that $\bigcup_{i \in [1, \alpha_1]}\left( \bigcup_{j\in [1, \alpha_2^{(i)}]} P_j^{(i)} \right) = V$ as the union of the family of $r/2$-packings $\mathcal{B} = \{\mathcal{B}_1, \mathcal{B}_2, \cdots, \mathcal{B}_{c_1}\}$ covers all points in $V$ (recall that $\mathcal{B}_i$ is just the set of $r/2$-balls centered at points in $V_i$). 

Clearly, each $P_j^{(i)}$ adapts a \cliqueP{}: Indeed, for each $V_j^{(i)}$, any two nodes in $V_j^{(i)}$ are at least distance $2r$ apart (as the $r$-balls centered at nodes in $V_j^{i}$ are disjoint), meaning that the $r/2$-balls around them are more than distance $r$  away, in the Hausdorff metric. 
In other words, $\mathcal{P} = \left\{ P_j^{(i)}, i\in [1, \alpha_1], j \in [1, \alpha_2^{(i)}] \right\}$ forms a \myWSP{} of $G^*$.  
Finally, since $\alpha_1, \alpha_2^{(i)} \le \myBC(d) = \beta$, the cardinality of $\mathcal{P}$ is thus bounded by $\beta^2$. 
\end{proof}

%%%%%%%%%%%%%%%%%%%%%%%%%%%%%%%%%%%%%%%%%%%%%%%%%%%%%%%%%%%%%%%%%%%%%%%%%%%%%%%%%%%%%%%%%%%%%%%%%%%%%%%%%%%%%%

\section{Proof of Theorem \ref{thm:insertion_only_RGG}}\label{sec:proof_insertion_only_RGG}
In this section, we focus on estimating the order of $\omega\left(G_n^{0,\qqq} \right)$, the clique number of $G_n^{0,\qqq}$. Note that for any set $W \subseteq \mathbb{R}^d$, the generalised scan statistic $M_W$ (see Definition \ref{def:generalised_scan_statistic}) is the maximum number of points in the vertex set $\mathcal{X}_n = \{X_1, X_2, \cdots, X_n\}$ in any translate of $rW$. Set $W_{1/2} := B_{1/2}(\origin)$ and $W_1 := B_{1}(\origin)$ where $\origin$ is the origin. 
It is obvious that $\omega\left(G_n^{0,\qqq}\right) \geq M_{W_{1/2}}$. Thus, the lower bound can be directly derived by using the results related to the generalised scan statistic in \cite{mcdiarmid2011chromatic}. However, getting an upper bound is much more challenging, since unlike $\omega(G_n) \leq M_{W_2}$ holds in $G_n$, the vertices in a clique of $G_n^{0,\qqq}$ can come from everywhere in the space.

\subsection{Proof of Part (\rom{1}) --- subcritical regime}\label{sec:insertion_only_very_sparse}
In this section, we discuss the order of $\omega(G_n^{0,\qqq})$ in the regime $nr^d \leq n ^{-\alpha}$ for some fixed $\alpha \in \left(0, 1 / \myBC^2\right]$. First, we define the \emph{long-edges} in $G_n^{0,\qqq}$.

\begin{definition}[long-edges]\label{def:longedge}
An edge $(u,v)$ in a $(0,\qqq)-$perturbed noisy random geometric graph $G^{0,\qqq}_n$ is a \emph{long-edge} if and only if $\norm{u-v} > 3r$.
\end{definition}

Cliques in $G^{0,\qqq}_n$ can be classified into the following two types: 
\begin{itemize}\denselist
\item Type-\rom{1} clique: doesn't contain any long-edges 
\item Type-\rom{2} clique: contains at least one long-edge
\end{itemize}

In what follows, we derive upper bounds for each type of cliques separately. Intuitively, Type-\rom{1} cliques primarily depend on the underlying random geometric graph, while Type-\rom{2} sees a stronger effect of the \myER{}-perturbation. We use $3r$ as the threshold in the definition of long-edges, to intuitively decouple the interaction of the local neighborhood of $u$ and $v$ within the random geometric graphs.
The lower bounds are easier to derive and can be found later in Section \ref{subsubsec:remaining}. 
\paragraph{\bf Type-\rom{1} cliques.~}
%\subsubsection{Type-\rom{1} cliques} 
The case of Type-\rom{1} cliques is rather simple to handle:  
Note that vertices of one Type-\rom{1} clique are contained within a ball of radius $3r$ centered at some vertex $X_i\in \mathcal{X}_n$. 
Thus, to bound the size of such clique from above, it suffices to estimate the number of vertices in each of the $n$ number of $3r$-ball centered at some vertex in $\mathcal{X}_n$. Set $W_3 := B_{3}(\origin)$. We have the following lemma, which gives a uniform upper bound of number of vertices in any $3r-$ball. It is a simplified variant of Lemma 3.8 in \cite{mcdiarmid2011chromatic}. We include its simple proof for completeness.

\begin{lemma}\label{lem:num_points_3rball_very_sparse}
If $nr^{d} \leq n^{-\alpha}$ then $\myprob\left[M_{W_3} \leq \ceil*{4 / \alpha}\right] = 1 + O(n^{-3})$.
\end{lemma}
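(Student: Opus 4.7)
The plan is to convert the supremum over all translates $x \in \mathbb{R}^d$ appearing in $M_{W_3}$ into a union bound over $n$ balls centered at the sample points (at the cost of doubling the radius), and then apply the binomial upper tail bound from Lemma \ref{lem:binomial_upperbound}. Concretely, set $k := \lceil 4/\alpha \rceil$.

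First I would establish the following recentering observation: if some $x \in \mathbb{R}^d$ satisfies $\mathcal{N}(B_{3r}(x)) \ge k+1$, then picking any one of these $k+1$ sample points as a new center $X_i$, the triangle inequality forces all $k+1$ of them to lie in $B_{6r}(X_i)$. Hence
\begin{equation*}
\{M_{W_3} \ge k+1\} \ \subseteq\ \bigcup_{i=1}^{n} \bigl\{\mathcal{N}(B_{6r}(X_i)) \ge k+1\bigr\}.
\end{equation*}
This replaces an uncountable supremum by a union of $n$ events. For a fixed $i$, conditioning on $X_i$, the number of remaining points falling in $B_{6r}(X_i)$ is $\mathrm{Bin}(n-1, p_i)$ with $p_i = \nu(B_{6r}(X_i)) \le \sigma \theta (6r)^d$, so its mean is uniformly bounded by $\mu := 6^d \sigma \theta \, n r^d \le 6^d \sigma \theta \, n^{-\alpha}$. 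In particular $\mu \to 0$, so $k \ge \mu$ for all sufficiently large $n$, and Lemma \ref{lem:binomial_upperbound} gives
\begin{equation*}
\myprob\bigl[\mathcal{N}(B_{6r}(X_i)) \ge k+1\bigr] \ \le\ \Bigl(\tfrac{e\mu}{k}\Bigr)^{\!k} \ \le\ C^{k}\, n^{-\alpha k},
\end{equation*}
for a constant $C = C(d, \sigma, \theta, \alpha)$ independent of $i$ and $n$.

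Taking a union bound over $i=1,\dots,n$ then yields $\myprob[M_{W_3} \ge k+1] \le C^{k}\, n^{1 - \alpha k}$. Since $k = \lceil 4/\alpha \rceil \ge 4/\alpha$, we have $\alpha k \ge 4$, so the right-hand side is $O(n^{-3})$, which rearranges to $\myprob[M_{W_3} \le k] = 1 + O(n^{-3})$. The only mildly delicate step is the recentering observation that reduces the supremum to a finite union; after that, the proof is a routine application of the binomial tail bound together with the density bound $\sigma$, and I do not expect any substantive obstacle.
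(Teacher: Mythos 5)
Your proof is correct and follows essentially the same route as the paper: recenter the supremum over translates of $B_{3r}$ to a union over the $n$ balls $B_{6r}(X_i)$, bound each event by a binomial tail via Lemma \ref{lem:binomial_upperbound} using the density cap $\sigma$, and then union-bound, with $k=\ceil*{4/\alpha}$ giving the exponent $1-\alpha k\le -3$. The only difference is cosmetic: you spell out the recentering/triangle-inequality step that the paper leaves implicit.
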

\begin{proof}%[Proof of Lemma \ref{lem:num_points_3rball_very_sparse}]
For some fixed integer $k$, we have the following inequality. 
\begin{align*}
\myprob\left[M_{W_3} \geq k+1\right] &\leq \myprob\left[\exists i: \mathcal{N}\left(B_{6r}(X_i)\right) \geq k + 1\right] 
~~ \leq~~ n \myprob\left[ \mathcal{N}\left(B_{6r}(X_1)\right) \geq k + 1\right]
\end{align*}
Furthermore, note that 
\begin{align*}
\myprob\left[ \mathcal{N}\left(B_{6r}(X_1)\right) \geq k + 1\right] &\leq \myprob\left[Bin \left(n, \sigma \theta (6r)^d  \right) \geq k \right]
~\leq~\left( \frac{e\sigma \theta 6^d (nr)^d}{k}\right)^{k} =~ O(n^{-k\alpha})
\end{align*}
%\yusu{Minghao, what is $\sigma$? I added $\theta$. It would also be good if you use $\cdot$ sometimes to remove ambiguity; otherwise, it is confusing whether $\theta(???)$ is $\theta$ times $(???)$ or $\theta$ is a function. } 
%\yusu{Why the change from $k+1$ to $k$?}
Recall that $\sigma$ is the maximum density of $\nu$ and $\theta = \int_{B_1(\origin)} dx$ are introduced in the standard setting-R at the end of Section \ref{subsec:definitions_notations}. 
The second inequality holds due to Lemma \ref{lem:binomial_upperbound}. Now pick $k = \ceil*{4 / \alpha}$. We then have $\myprob\left[M_{W_3} \leq \ceil*{4 /\alpha}\right] = 1 + O(n^{-3})$ as required.
\end{proof}

It then follows that the size of Type-\rom{1} cliques can be bounded from above by $\ceil*{4 /\alpha}$ almost surely.
%\yusu{Minghao, where are the lower bounds? If it follows from that $M_{1/2}$ said earlier, then you need to state it explicitly, and point out that we only prove upper bounds from now on. It is not clear right now. }

\paragraph{\bf Type-\rom{2} cliques.~}
%\subsubsection{Type-\rom{2} cliques}
Now let's consider the Type-\rom{2} cliques, which is significantly more challenging to handle. Recall that $W_1 = B_{1}(\origin)$. We can use the same argument in Lemma \ref{lem:num_points_3rball_very_sparse} to derive the following lemma which gives a uniform upper bound of the number of points in any $r$-ball. 
\begin{lemma}\label{lem:upperbound_r_ball_very_sparse}
If $nr^{d} \leq n^{-\alpha}$ then $\myprob\left[M_{W_1} \leq \ceil*{4 / \alpha}\right] = 1 + O\left(n^{-3}\right)$.
\end{lemma}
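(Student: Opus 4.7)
The plan is to mimic the proof of Lemma~\ref{lem:num_points_3rball_very_sparse} verbatim, only replacing the radius $3r$ by $r$ (so the enveloping ball shrinks from $6r$ to $2r$). The point is that $M_{W_1}=\max_{x\in\R^d}\mathcal{N}(B_r(x))$, and if $B_r(x)$ contains at least $k+1$ points of $\mathcal{X}_n$ then, picking any such point $X_i\in B_r(x)\cap\mathcal{X}_n$, the triangle inequality gives $B_r(x)\subseteq B_{2r}(X_i)$, so $\mathcal{N}(B_{2r}(X_i))\ge k+1$ as well. Hence
\begin{equation*}
\myprob[M_{W_1}\ge k+1]\ \le\ \myprob\!\left[\exists\, i:\ \mathcal{N}(B_{2r}(X_i))\ge k+1\right]\ \le\ n\,\myprob\!\left[\mathcal{N}(B_{2r}(X_1))\ge k+1\right],
\end{equation*}
where the last step is a union bound over $i\in\{1,\ldots,n\}$.

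Next I would bound $\mathcal{N}(B_{2r}(X_1))$ by a binomial: conditionally on $X_1$, the remaining $n-1$ points each land in $B_{2r}(X_1)$ with probability at most $\sigma\theta(2r)^d$ (using that $\sigma$ is the essential sup of the density and $\theta$ is the volume of the unit ball in $\norm{\cdot}$), so $\mathcal{N}(B_{2r}(X_1))$ is stochastically dominated by $1+\mathrm{Bin}(n-1,\sigma\theta(2r)^d)$. Applying the upper tail bound from Lemma~\ref{lem:binomial_upperbound} (valid since for our choice of $k$ we are well above the mean), we get
\begin{equation*}
\myprob\!\left[\mathcal{N}(B_{2r}(X_1))\ge k+1\right]\ \le\ \left(\frac{e\,\sigma\,\theta\,2^d\,(nr)^d}{k}\right)^{\!k}\ =\ O\!\left(n^{-k\alpha}\right),
\end{equation*}
since by hypothesis $nr^d\le n^{-\alpha}$ and $\sigma,\theta,2^d,k$ are constants.

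Combining the two displays yields $\myprob[M_{W_1}\ge k+1]=O(n^{1-k\alpha})$. Choosing $k=\lceil 4/\alpha\rceil$ makes $1-k\alpha\le -3$, giving $\myprob[M_{W_1}\ge\lceil 4/\alpha\rceil+1]=O(n^{-3})$, which is equivalent to the stated $\myprob[M_{W_1}\le\lceil 4/\alpha\rceil]=1+O(n^{-3})$. There is no real obstacle here beyond being careful about the ``$x$ to $X_i$'' reduction (the doubling of the radius); once that observation is in place, the union bound and Lemma~\ref{lem:binomial_upperbound} do all the work, exactly paralleling Lemma~\ref{lem:num_points_3rball_very_sparse}.
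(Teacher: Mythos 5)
Your proposal is correct and is exactly the argument the paper intends — the text of the paper explicitly says Lemma~\ref{lem:upperbound_r_ball_very_sparse} follows by ``the same argument'' as Lemma~\ref{lem:num_points_3rball_very_sparse}, and you carry that out faithfully, replacing the $3r\to 6r$ doubling by $r\to 2r$ and adjusting the constant $6^d$ to $2^d$ in the binomial mean. (Incidentally, you reproduce the paper's harmless typo ``$(nr)^d$'' for the quantity that is really $n r^d = n\sigma\theta(2r)^d/(\sigma\theta 2^d)$; the asymptotic $O(n^{-k\alpha})$ is only correct with $nr^d$, and that is what both you and the paper are actually using.)
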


We now introduce a local version of the clique number called \emph{edge clique number}. 
%It can be used to describe the maximum size of the type-\rom{2} cliques.
\begin{definition}[Edge clique number] \label{def:edgecliquenumber}
Given a graph $G = (V,E)$, for any edge $(u,v) \in E$, its \emph{edge clique number $\omega_{u,v}(G)$} is defined as
\begin{align*}
\omega_{u,v}(G) = \text{ the largest size of any clique in $G$ containing } (u,v).
\end{align*}
\end{definition}

We are now ready to bound the size of all the type-\rom{2} cliques in $G_n^{0,\qqq}$. 
More precisely, the following theorem first bound the edge clique number for all \emph{long edge $(u,v)$}. This is the key theorem in this Section, and we include its proof in the next subsection. 

\begin{theorem}\label{thm:type_2_very_sparse}
Given an $(0,\qqq)$-perturbed noisy random geometric graph $G_n^{0,\qqq}$ in \mysetting{} and suppose that $nr^d \leq n ^{-\alpha}$ for some fixed $\alpha \in \left(0, 1 / \myBC^2\right]$, then
\begin{itemize}\denselist 
\item[(a)] There exist constants $C_1, C_2 > 0$ which depend on the \Bconst{} $\myBC$ and $\alpha$ such that if 
\begin{align}\label{eqn:type_2_qbound_very_sparse}
\qqq \leq C_1 \left(1/n\right)^{C_2}
\end{align}
 then, with high probability, for all long-edge $(u,v)$ in $G_n^{0,\qqq}$, its edge clique number $\omega_{u,v}(G_n^{0,\qqq}) \lesssim 1$. 
\item[(b)] There exists a constant $\xi > 0$ which depends on the \Bconst{} $\myBC$ and $\alpha$ such that if $\left(1 / n \right)^{\xi} \leq \qqq < 1$, then, with high probability, for all long-edge $(u,v)$ in $G_n^{0,\qqq}$, its edge clique number $\omega_{u,v}(G_n^{0,\qqq}) \lesssim \log_{1 /\qqq}n$.  
\end{itemize}

\end{theorem}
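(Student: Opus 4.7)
The plan is to bound $\omega_{u,v}(G_n^{0,\qqq})$ for long-edges by disentangling the geometric randomness of the vertex positions from the Bernoulli $\qqq$-insertion randomness, via the \myWSP{} machinery of Theorem~\ref{lem:BCLdoubling}. First, apply that theorem to the underlying RGG $G_n$ to obtain a family $\mathcal{P}=\{P_i\}_{i\in\Lambda}$ with $|\Lambda|\leq \myBC^2$, whose parts are each partitioned into sub-cliques $C_j^{(i)}$ lying in $r/2$-balls with pairwise separation exceeding $r$. Simultaneously, Lemma~\ref{lem:upperbound_r_ball_very_sparse} (applied to $r/2$-balls, which sit inside $r$-balls) gives, with probability $1-O(n^{-3})$, that every such sub-clique contains at most $s_0 := \lceil 4/\alpha\rceil$ vertices of $\mathcal{X}_n$; call this high-probability event $\mathcal{E}_1$.

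Now let $C$ be any clique in $G_n^{0,\qqq}$ of size $K$, in particular a maximum clique through the long-edge $(u,v)$. By pigeonhole some $P_i$ satisfies $|C\cap P_i|\geq K/\myBC^2$, and on $\mathcal{E}_1$ the vertices of $C\cap P_i$ must meet at least $m^* := \lceil K/(\myBC^2 s_0)\rceil$ distinct sub-cliques $C_j^{(i)}$. Pick one representative from each such sub-clique to form a set $S\subseteq C$ of size $m^*$. By condition (2-b) of Definition~\ref{def:wellseparated}, any two vertices of $S$ lie at distance $>r$, so no pair of them is an edge of $G_n$; hence the complete graph on $S$ in $G_n^{0,\qqq}$ comes entirely from $\qqq$-insertions.

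Since the $\qqq$-insertions are Bernoulli$(\qqq)$ trials independent of the positions, a union bound over all $m^*$-subsets of $\mathcal{X}_n$ gives
\[
\myprob\left[\exists\, S\subseteq \mathcal{X}_n:\ |S|=m^*,\ \text{pairwise distance}>r,\ S \text{ is a clique in }G_n^{0,\qqq}\right] \leq \binom{n}{m^*}\qqq^{\binom{m^*}{2}}.
\]
The existence of a clique of size $K$ through $(u,v)$ implies the existence of such an $S$, so it suffices to tune $m^*$ so that the right-hand side tends to $0$. For part (a), since $\qqq\leq C_1(1/n)^{C_2}$, taking $m^*$ equal to a fixed constant $m_0\geq 3$ and $C_2 > 2/(m_0-1)$ gives $\binom{n}{m_0}\qqq^{\binom{m_0}{2}}=O(n^{-\delta})\to 0$, forcing $K < m_0\myBC^2 s_0 = O(1)$. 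For part (b), when $\qqq\geq (1/n)^{\xi}$, choosing $m^* = \lceil C_3\log_{1/\qqq}n\rceil$ with $C_3>2$ makes $\binom{n}{m^*}\qqq^{\binom{m^*}{2}}\to 0$, so $K \lesssim \log_{1/\qqq}n$ almost surely. The bound is uniform over all choices of long-edge $(u,v)$, giving the conclusion.

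The main obstacle is making the decoupling rigorous: the representative set $S$ is defined in terms of both the positions and the clique $C$, so a naive independence argument is tempting but inadequate. The resolution is to replace the event ``$\exists$ clique of size $\geq K$ through a long-edge'' with the strictly larger event ``$\exists$ $r$-separated $m^*$-subset of $\mathcal{X}_n$ that is a clique in $G_n^{0,\qqq}$'', and to union-bound the latter. Because the $\qqq$-insertions are independent of positions, the conditional probability that a fixed $r$-separated subset forms a clique in $G_n^{0,\qqq}$ equals exactly $\qqq^{\binom{m^*}{2}}$, which is what makes the estimate go through. A secondary technical point is tracking how the constants $C_1, C_2$ in part (a) and $\xi$ in part (b) depend on $\myBC$ and $\alpha$ through $s_0 = \lceil 4/\alpha\rceil$; the hypothesis $\alpha\leq 1/\myBC^2$ ensures these choices are consistent.
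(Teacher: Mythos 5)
Your proof is correct, and it takes a genuinely different and in fact cleaner route than the paper's. The paper's proof conditions on a specific long-edge $(u,v)$, splits the vertex set into $\tilde{A}_{uv}$ (vertices far from $u,v$) and $B_{uv}$ (the union of the two $r$-balls around $u,v$), handles the two halves separately (the first via the \myWSP{} plus a delicate first-moment computation of the expected number of $(k+2)$-cliques through $(u,v)$, Eqn.~(\ref{eqn:expectationX_very_sparse}), whose estimation occupies Lemmas~\ref{lem:insertioncaseA_very_sparse} and \ref{lem:insertioncaseA_very_sparse_looser}; the second via an auxiliary local graph $\tilde{G}_{uv}^{local}$), and then union-bounds over the $O(n^2)$ long-edges. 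You instead apply the \myWSP{} to the whole vertex set, observe (correctly) that on the event $\mathcal{E}_1$ bounding $M_{W_1}$ any clique of size $K$ must contain an $r$-separated subset of size $m^* = \lceil K/(\myBC^2 s_0)\rceil$, and bound the probability that \emph{any} $r$-separated $m^*$-subset of $\mathcal{X}_n$ is a clique directly by $\binom{n}{m^*}\qqq^{\binom{m^*}{2}}$, conditioning on positions to make the independence rigorous. This bypasses the composition-sum machinery entirely and also renders the $B_{uv}$ case and the $3r$-threshold unnecessary; it bounds all cliques at once, which is strictly stronger than the stated $\omega_{u,v}$ bound for long-edges and effectively absorbs the Type-I analysis too. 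The price is a slightly worse constant (an extra factor of $\myBC^2 s_0$ inside the $\lesssim$), but that is harmless, and the parameter bookkeeping you sketch for $C_1, C_2$ and $\xi$ checks out: you need $C_2 > 2/(m_0-1)$ in part (a), and $\xi < C_3-2$ with $m^* = \lceil C_3\log_{1/\qqq}n\rceil$, $C_3>2$, in part (b). One small note on wording: ``Hausdorff distance $> r$'' in the paper's condition (2-b) should really be read as all pairwise distances exceeding $r$ (which is what Theorem~\ref{lem:BCLdoubling} actually delivers, since the ball centers are $>2r$ apart); you interpret it that way, which is the intended and correct reading.
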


\subsubsection{Proof of Theorem \ref{thm:type_2_very_sparse}}
\label{subsec:thm:type_2_very_sparse}

%\begin{proof}[Proof of Theorem \ref{thm:type_2_very_sparse}] 

\paragraph{\bf Proof of part (a) of Theorem \ref{thm:type_2_very_sparse}.~} Given any vertex $y$, let $B^{\mathcal{X}_n}_r(y) \subseteq \mathcal{X}_n$ denote $B_r(y) \cap \mathcal{X}_n$. 
Now consider a long-edge $(u,v)$. Set $A_{uv} = \mathcal{X}_n \setminus \left( B^{\mathcal{X}_n}_r(u)\cup B^{\mathcal{X}_n}_r(v) \right)$ and $B_{uv} = B^{\mathcal{X}_n}_r(u)\cup B^{\mathcal{X}_n}_r(v)$. Denote $\tilde{A}_{uv} = A_{uv} \cup \{u\} \cup \{v\}$; 
easy to check that $\mathcal{X}_n = \tilde{A}_{uv} \cup {B}_{uv}$. 

Let $G|_S$ denote the subgraph of $G$ spanned by a subset $S$ of its vertices. 
Given any set $\aC$, let $\aC|_S  = \aC \cap S$ be the restriction of $\aC$ to another set $S$. 
Now consider a subset of vertices $\aC \subseteq \mathcal{X}_n$: obviously, $\aC = \aC|_{\tilde{A}_{uv}}\cup \aC|_{B_{uv}}$. 

Set $N_{\max} := \ceil*{4 / \alpha}$. Denote $\aF$ to be the event that ``for every $v \in \mathcal{X}_n$, the ball $B_{r}(v)\cap \mathcal{X}_n$ contains at most $N_{\max}$ points''; 
 and $\aF^c$ denotes the complement event of $\aF$. By Lemma \ref{lem:upperbound_r_ball_very_sparse}, we know that, $\myprob[\aF^c] = O(n^{-3})$.

Let $\aK \geq 8 \myBC^2$ be an integer to be determined. By applying the pigeonhole principle and the union bound, we have: 
\begin{align}\label{eqn:mainresult_very_sparse}
&\myprob\left[ \omega_{u,v}\left(G_n^{0,\qqq}\right) \ge \aK  \middle| \aF \right] \nonumber \\
\le ~~ &\myprob\left[ \omega_{u,v}\left( G_n^{0,\qqq}|_{\tilde{A}_{uv}} \right) \ge \aK /2\middle| \aF\right] + \myprob\left[ \omega_{u,v}\left( G_n^{0,\qqq}|_{B_{uv}} \right) \ge \aK /2 \middle| \aF\right] 
\end{align}
Next, we bound the two terms on the right hand side of Eqn. (\ref{eqn:mainresult_very_sparse}) separately in Case (i) and Case (ii) below. 

\begin{figure}[h]
\centering
\begin{tabular}{ccc}
\includegraphics[height=3.5cm]{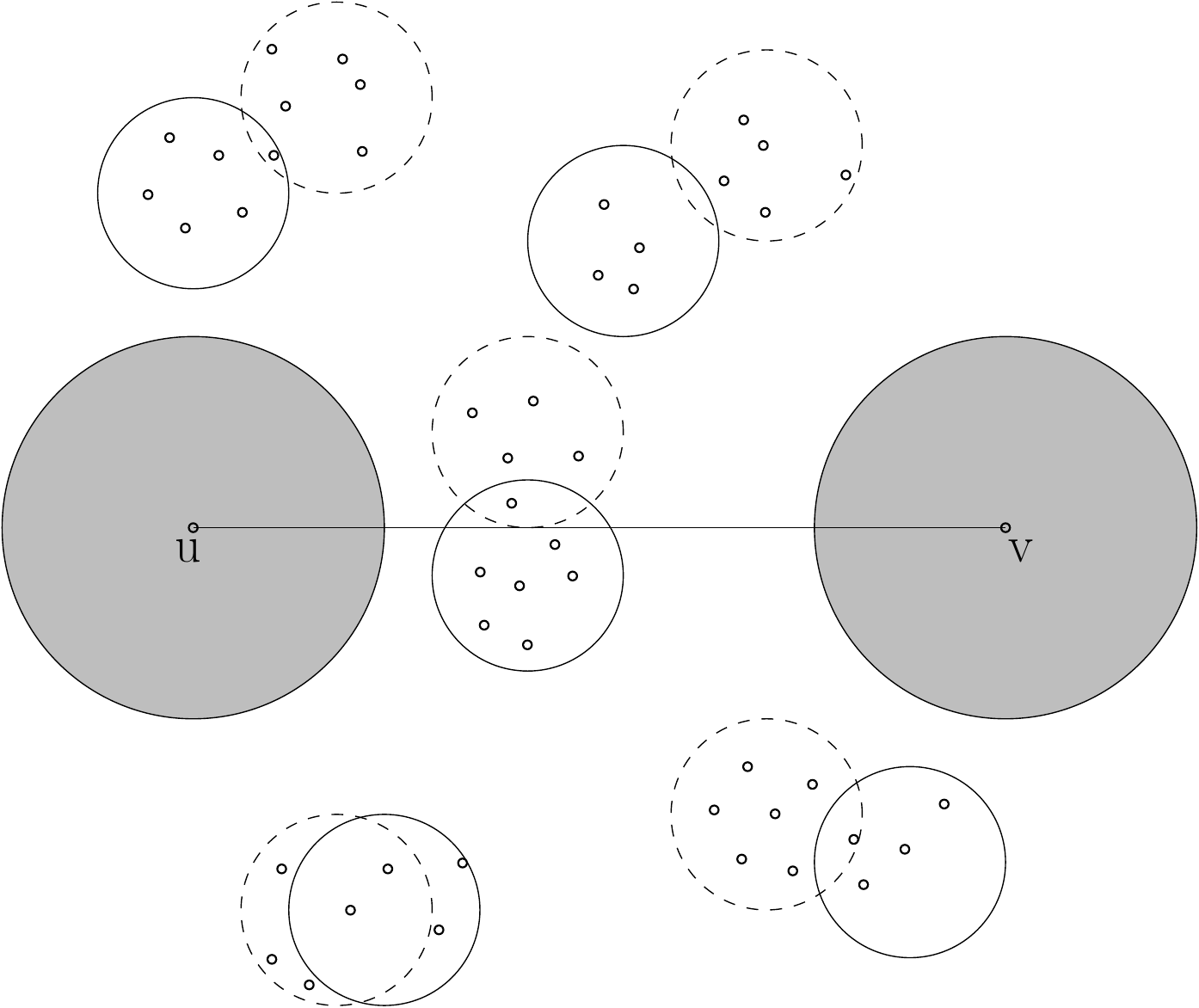} &\hspace*{0.3in}  & \begin{minipage}[c]{6cm}
\vspace*{-3.5cm}
\includegraphics[height=2cm]{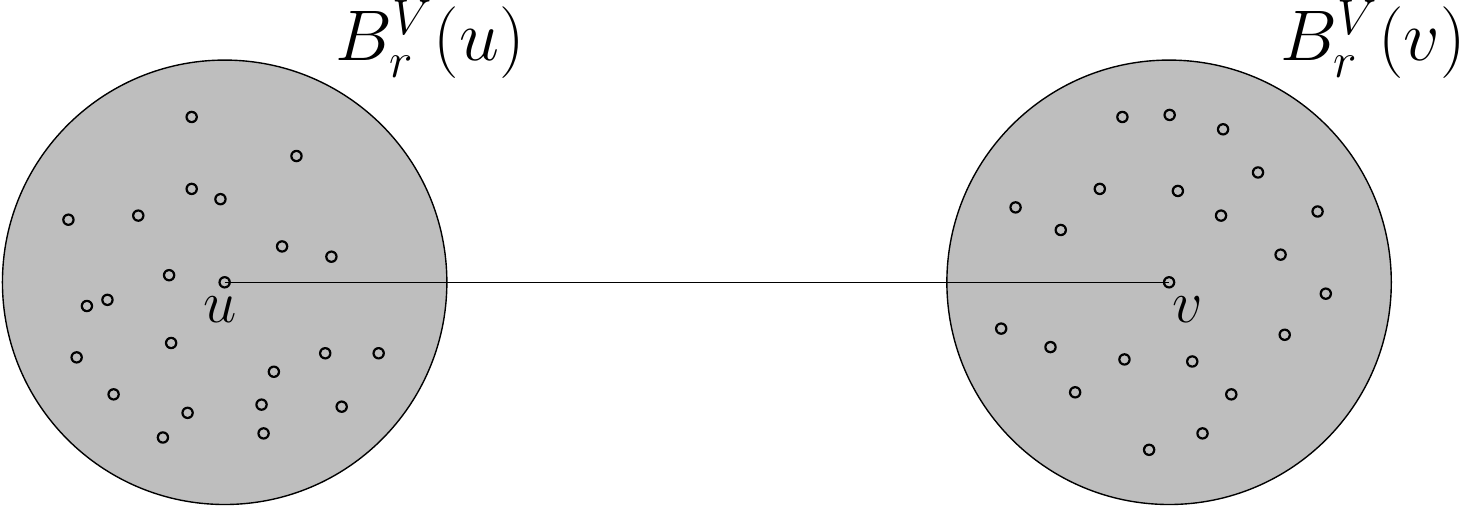}
\end{minipage}  \\
(a) &  & \hspace*{0in}(b)
\end{tabular}
\vspace*{-0.15in}
\caption{(a) A well-separated clique partition $\mathcal{P} = \{P_1, P_2\}$ of $A_{uv}$ --- points in the solid balls are $P_1$, and those in dashed balls are $P_2$. (b) Points in $B_{uv}$. }
\label{fig:twocases_very_sparse}
\end{figure}

\paragraph{\bf Case (\romannumeral 1): bounding the first term in Eqn. (\ref{eqn:mainresult_very_sparse}).~} 
Directly bounding the edge clique number using points from $\tilde{A}_{uv}$ is challenging, as two types of edges are involved (a ``local" edge from random geometric graph, or a randomly inserted \myER{} type edge). Hence we will use the \myWSP{} introduced earlier, to consider only special types of cliques where the number of combinatorial choices these two types of edges can induce is limited.
We apply Theorem \ref{lem:BCLdoubling} for points in $A_{uv}$.   
This gives us a \myWSP{} $\mathcal{P} = \{P_i\}_{i\in \Lambda}$ of $A_{uv}$ with $|\Lambda| \le \myBC^2$ being a constant. 
See Figure \ref{fig:twocases_very_sparse} (a).
Augment each $P_i$ to $\tilde{P}_i = P_i \cup \{u\} \cup \{v\}$. 
Suppose there is a clique $C$ in $G_n^{0, \qqq}|_{\tilde{A}_{uv}}$, then as $\bigcup_i \tilde{P}_i = \tilde{A}_{uv}$, we have $C = \bigcup_{i\in \Lambda} C|_{\tilde{P}_i}$, implying that $|C| \le \sum_{i\in \Lambda} \left| C|_{\tilde{P}_i}\right|$. 
Hence again by applying pigeonhole principle and the union bound, we derive the following inequality: 
\begin{align}\label{eqn:caseAunionbound_very_sparse}
&\myprob\left[ \omega_{u,v} \left(G_n^{0,\qqq}|_{\tilde{A}_{uv}} \right) \ge \aK / 2 \middle| \aF\right] 
~~\leq~~ \sum_{i=1}^{|\Lambda|} ~\myprob\left[ \omega_{u,v}\left(G_n^{0,\qqq}|_{\tilde{P}_{i}}\right) \ge \aK / (2|\Lambda|) \middle| \aF\right]
\end{align}    

Now for arbitrary $i \in \Lambda$, consider  $G_n^{0,\qqq}|_{\tilde{P}_i}$, the induced subgraph of $G_n^{0,\qqq}$ spanned by vertices in $\tilde{P}_i$. Note, $G_n^{0,\qqq}|_{\tilde{P}_i}$ can be viewed as generated by inserting each edge not in $G_n|_{\tilde{P}_i} \cup \{(u,v)\}$ with probability $\qqq$. 
Recall from Definition \ref{def:wellseparated} that each $P_i$ adapts a clique-partition $C_1^{(i)}\sqcup \dots \sqcup C_{m_i}^{(i)}$, where every $C_j^{(i)}$ is contained in an $r/2$-ball, and all such balls are $r$-separated (w.r.t Hausdorff distance).

Now fix any $i \in \Lambda$. For simplicity of the argument below, set $m = m_i$, and let $N_j = \left|C_j^{(i)}\right|$ denote the number of points in the $j$-th cluster $C_j^{(i)}$. 
Note that obviously, $m \le |P_i| \le \left|\mathcal{X}_n \right|=n$ for any $i\in \Lambda$. We also know that if event $\aF$ has already happened, then $N_j \leq N_{\max}$.

Observe that the induced subgraph $G_n^{0,\qqq}|_{\tilde{P}_i}$ consists of a set of cliques (each clique is spanned by some $C_j^{(i)}$ with edges coming from the base random geometric graph $G_n$), $u$, $v$, edge $(u,v)$, and inserted edge between them with insertion probability $\qqq$ (see Figure \ref{fig:illustration_case_A}).

\begin{figure}[htbp]
\centering
\includegraphics[height=6cm]{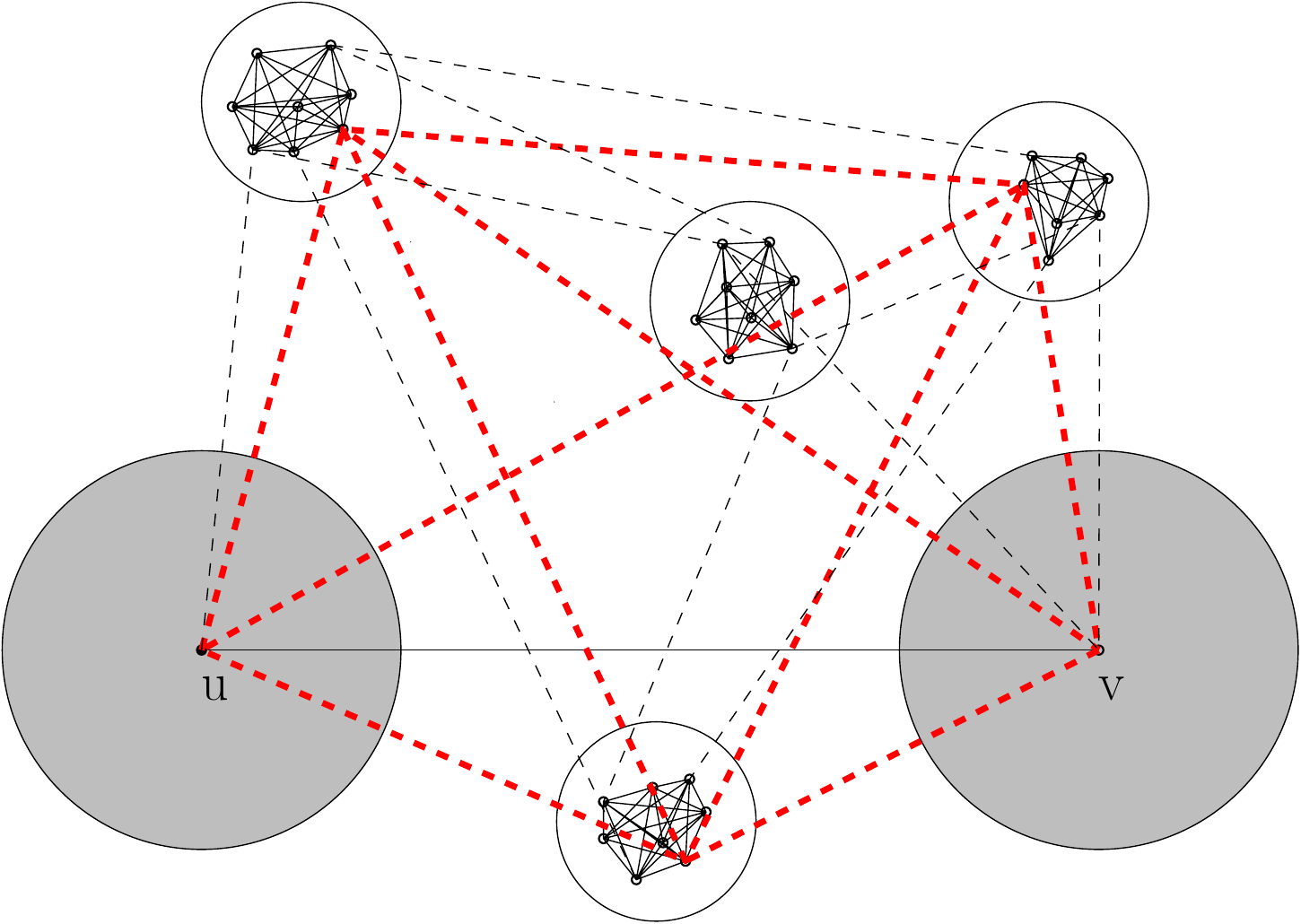}
\caption{The red dashed lines and the edge $uv$ form a possible clique in some well-separated clique partition $P_i$. The points in the small balls are the nodes falling in $r/2-$balls (and thus they are all pairwise connected in the base random geometric graph $G_n$). All the dashed lines are the randomly inserted edges (independently with probability $\qqq$).}
%(all non-existing edges are inserted independently with probability $q$).}
\label{fig:illustration_case_A}
\end{figure}

Now set $k := \floor*{\aK / 2|\Lambda|}-2$. Since $\aK \geq 8 \myBC^2$, easy to see that $k \geq 1$. For every set $S$ of $k+2$ vertices in this graph $G_n^{0,\qqq}|_{\tilde{P}_i}$, let $A_S$ be the event ``\emph{$S$ is clique in $G_n^{0,\qqq}|_{\tilde{P}_i}$ containing $(u,v)$ given $\aF$}'' and $\aI_S$ its indicator random variable. Set 
\begin{align*}
\aI = \sum\limits_{|S|=k+2}\aI_S
\end{align*}
and note that $\aI$ is the number of cliques of size $(k+2)$ in $G_n^{0,\qqq}|_{\tilde{P}_i}$ containing $(u,v)$ given $\aF$. 
It follows from Markov inequality that:  
\begin{align}\label{eqn:probGPi_very_sparse}
\myprob\left[\omega_{u,v} \left(G_n^{0,\qqq}|_{\tilde{P}_{i}}\right) \ge  k+2 \middle| \aF\right] = \myprob[\aI > 0] \leq \myE[\aI] 
\end{align}
On the other hand, using linearity of expectation, we have:
\begin{align}\label{eqn:expectationX_very_sparse}
\myE[\aI] ~~=~~ \sum\limits_{|S|=k+2}\myE[\aI_S] ~~=~~ \qqq^{2k}\sum\limits_{\substack{x_1 + x_2 + \cdots + x_m = k \\ 0 \leq x_i \leq N_i}} \binom{N_1}{x_1} \binom{N_2}{x_2}\cdots \binom{N_m}{x_m}\qqq^{(k^2-\sum_{i=1}^{m}x_i^2)/2}\nonumber\\
\leq \qqq^{2k}\sum\limits_{\substack{x_1 + x_2 + \cdots + x_m = k \\ 0 \leq x_i \leq N_{\max}}} \binom{N_{\max}}{x_1} \binom{N_{\max}}{x_2}\cdots \binom{N_{\max}}{x_m}\qqq^{(k^2-\sum_{i=1}^{m}x_i^2)/2}
\end{align}

To estimate this quantity, we have the following lemma:  
\begin{lemma}\label{lem:insertioncaseA_very_sparse}
If $1 \leq k \leq N_{\max}$ and $\qqq$ is less than or equal to
\begin{align}\label{eqn:qbound1_very_sparse}
\min \left\{ \frac{1}{\sqrt{e}} \left(\frac{1}{n^3m}\right)^{\frac{1}{2k}} \left( \frac{k}{N_{\max}}\right)^{\frac{1}{2}}, \frac{1}{2ek^{\frac{1}{k}}}\left(\frac{1}{n^3m^2} \right)^{\frac{1}{k}} \frac{k}{N_{\max}}, \frac{1}{e^{\frac{4}{k}}}\left(\frac{1}{n^3m^k} \right)^{\frac{4}{k^2}}\left(\frac{k}{N_{\max}} \right)^{\frac{4}{k}} \right\},
\end{align}
then we have that $\myE[\aI] = O(n^{-3})$.
\end{lemma}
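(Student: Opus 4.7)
The plan is to upper bound the right-hand side of
$$\myE[\aI]\le \qqq^{2k}\sum_{\substack{x_1+\dots+x_m=k\\0\le x_j\le N_{\max}}}\prod_{j=1}^m\binom{N_{\max}}{x_j}\,\qqq^{(k^2-\sum_j x_j^2)/2}$$
by grouping compositions according to their combinatorial shape, so that each of the three hypotheses on $\qqq$ listed in \eqref{eqn:qbound1_very_sparse} controls one natural family of shapes.

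First I would parametrize by $t:=|\{j:x_j\ge 1\}|$, the number of occupied clusters, and view the nonzero entries as $y_1,\dots,y_t\ge 1$ with $\sum y_j=k$. Standard estimates $\binom{m}{t}\le (em/t)^t$ for the cluster choice and $\binom{N_{\max}}{y}\le (eN_{\max}/y)^y$ for the intra-cluster vertex choice turn the contribution of each shape into an explicit product in $m$, $N_{\max}$, $k$ and $\qqq$. Since $\qqq<1$ and $\sum y_j^2\in[\,k,\ (k-t+1)^2+(t-1)\,]$ under the constraints $\sum y_j=k$, $y_j\ge 1$, the worst $\qqq^{(k^2-\sum y_j^2)/2}$ for a given $t$ is attained by the ``giant-plus-singletons'' shape $(k-t+1,1,\dots,1)$, giving a clean $\qqq^{2k+(t-1)(2k-t)/2}$ upper bound.

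Next, the two extremal values $t=1$ and $t=k$ produce the two ``pure'' cases. The $t=1$ (single giant cluster) contribution is at most $m\binom{N_{\max}}{k}\qqq^{2k}\le m(eN_{\max}/k)^k\qqq^{2k}$, and Bound~1 in \eqref{eqn:qbound1_very_sparse} is tuned precisely to push this $\le n^{-3}$. The $t=k$ (all singletons) contribution is at most $\binom{m}{k}N_{\max}^k\qqq^{2k+\binom{k}{2}}$, and using the elementary $2k+\binom{k}{2}\ge k^2/4$, Bound~3 pushes this $\le n^{-3}$. For the intermediate range $2\le t\le k-1$, I would show by direct computation that the shape-$t$ upper bound is log-convex in $t$, so its maximum is attained at an endpoint of the intermediate range and is controlled by Bound~2; the $O(k)$ overhead from summing over $t$ is absorbed into constants because each bound was chosen so that $\qqq$ lies strictly below the threshold by a constant factor.

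Adding the three $O(n^{-3})$ contributions then yields $\myE[\aI]=O(n^{-3})$, as required. The main obstacle I anticipate is the intermediate-$t$ regime: cleanly showing that Bound~2 handles every $2\le t\le k-1$ simultaneously --- either via the log-convexity sketched above or, if that fails, by a finer case analysis parameterized by the pair $(t,\max_j y_j)$. The two pure cases reduce to routine rearrangements once the Besicovitch-style bookkeeping from Theorem~\ref{lem:BCLdoubling} and the definition of $N_{\max}$ from Lemma~\ref{lem:upperbound_r_ball_very_sparse} are in place.
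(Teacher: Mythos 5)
Your plan is a genuinely different decomposition from the paper's. The paper parametrizes compositions $(x_1,\dots,x_m)$ by $x_{\max}:=\max_i x_i$, the largest cluster occupancy, with three regimes $x_{\max}=k$, $x_{\max}\in[\lceil(k+1)/2\rceil,k-1]$, and $x_{\max}\le\lceil(k-1)/2\rceil$; you parametrize by $t:=|\{j:x_j\ge 1\}|$, the number of occupied clusters, with regimes $t=1$, $2\le t\le k-1$, $t=k$. The two pure cases agree: your $t=1$ is exactly the paper's $x_{\max}=k$, and your $t=k$ (all singletons) gives exponent $2k+\binom{k}{2}\ge k^2/4$, so Bound~3 in \eqref{eqn:qbound1_very_sparse} handles it at least as well as the paper's third case. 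For the intermediate range, your worst-shape bound for fixed $t$ is
\[
\binom{m}{t}\binom{tN_{\max}}{k}\,\qqq^{2k+(t-1)(2k-t)/2},
\]
and the analogue of the paper's key inequality $[2k+j(k-j)]/k\ge (k-j+1)/2$ is
\[
\frac{2k+(t-1)(2k-t)/2}{k}\ge \frac{t}{2}\quad\Longleftrightarrow\quad 2k+t(k-t+1)\ge 0,
\]
which holds for all $1\le t\le k$. Combined with the Vandermonde identity $\sum_{\sum y_j=k}\prod_j\binom{N_{\max}}{y_j}=\binom{tN_{\max}}{k}$ and Bound~2, this gives each $t\in[2,k-1]$ a contribution of order $n^{-3}/k$, and summing over $O(k)$ values of $t$ recovers $O(n^{-3})$. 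So your decomposition does work with the same three thresholds on $\qqq$.

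One caution: the log-convexity shortcut is an extra ingredient that you have not actually proven and do not need. It is plausible here because $-\log\qqq=\Omega(\log n)$ eventually dominates the concave contributions from $\log\binom{m}{t}$ and $\log\binom{tN_{\max}}{k}$ (both $k$ and $N_{\max}$ are bounded constants in this regime), but proving it cleanly is at least as much work as the direct term-by-term bound that the paper uses and that the inequality above supplies. You also describe Bound~2 as controlling ``every $2\le t\le k-1$'' --- this is true, but it is not obvious a priori (at $t=k-1$ one must exploit the full exponent $\approx k^2/2$ rather than just $\approx 2k$), and it is the exact place where the careful bookkeeping lives. I would advise replacing the convexity claim with the explicit per-$t$ bound.
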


The proof of this lemma is rather technical, and can be found in Appendix \ref{appendix:lem:insertioncaseA_very_sparse}. 

Note that $\alpha \leq  1 / \myBC^2$, thus $2 N_{\max} = 2\ceil*{4 /\alpha} \geq 8 \myBC^2$. Note that if $\aK \in \left[8 \myBC^2, 2N_{\max}\right]$, then it is easy to check that the assumption $1 \leq k \leq N_{\max}$ in Lemma \ref{lem:insertioncaseA_very_sparse} holds.

Furthermore, $|\Lambda| \le \myBC^2$ (which is a constant) and $m = |P_i| \le |\mathcal{X}_n| = n$. 
One can then verify that there exist constants $c^a_1$ and $c^a_2$ (which depend on the \Bconst{} $\myBC$ and $\alpha$), such that if 
\begin{align*}
\qqq \le c^a_1 \cdot \left(1 / n\right)^{c^a_2/\aK},
\end{align*}
then the conditions in Eqn. (\ref{eqn:qbound1_very_sparse}) will hold. %(the simple proof of this can be found in Appendix \ref{appendix:constantsc1c2_very_sparse}). \yusu{Minghao, see my comment in the appendix: just to verify that $c_1^a$ and $c_2^a$ do not depend on $\alpha$ either, right? they are purely constants {\bf independent} of any constants of the region or $\beta$, right?} \minghao{They depend on $\alpha$. Talk to you tomorrow. The $\aK$ and $k$ in this regime are all finite, but for other regimes, they must go to infinity as $n$ goes to infinity.} 
Thus, combining this with Lemma \ref{lem:insertioncaseA_very_sparse} and Eqn. (\ref{eqn:probGPi_very_sparse}), we know that
%\yusu{Minghao: I think here $\forall i \in \Lambda$ should be outside here, before ``If $8\beta^2$...", no? } 
\begin{align}
\text{If}~~8\myBC^2 \leq \aK \leq 2N_{\max} ~&\text{and}~~ \qqq \leq c^a_1 \cdot \left( 1 / n\right)^{c^a_2/\aK}, \nonumber \\
%,~~\text{then}~ 
&\text{then}~~ \forall i \in \Lambda, \myprob\left[ \omega_{u,v}\left(G_n^{0,\qqq}|_{\tilde{P}_{i}}\right) \ge  k+2\middle| \aF\right] = O(n^{-3}). \label{eqn:qone_very_sparse}
\end{align}

On the other hand, note that
\begin{align*}
&\myprob\left[\omega_{u,v}\left(G_n^{0,\qqq}|_{\tilde{P}_{i}}\right) \ge \aK / (2|\Lambda|)\middle| \aF\right] ~~=~~ \myprob\left[ \omega_{u,v}\left(G_n^{0,\qqq}|_{\tilde{P}_{i}}\right) \ge  k+2\middle| \aF\right]
\end{align*}

As $|\Lambda|$ is a constant, by Eqn. (\ref{eqn:caseAunionbound_very_sparse}), we obtain that
\begin{align}
\text{if}~~\forall i \in &\Lambda,  \myprob\left[\omega_{u,v}\left(G_n^{0,\qqq}|_{\tilde{P}_{i}}\right) \ge \aK / (2|\Lambda|)\middle| \aF\right] = O(n^{-3}) \text{, ~then}~ \nonumber \\
&\myprob\left[\omega_{u,v}\left(G_n^{0,\qqq}|_{\tilde{A}_{uv}}\right) \ge \aK / 2\middle| \aF\right] = O(|\Lambda| n^{-3}) = O(n^{-3}). \label{eqn:qtwo_very_sparse}
\end{align}

It then follows from Eqn. (\ref{eqn:qone_very_sparse}) and (\ref{eqn:qtwo_very_sparse}) that 
\begin{align}\label{eqn:caseAalmostfinal_very_sparse}
\text{If}~ 8\myBC^2 \leq \aK \leq 2N_{\max} &~\text{and}~ \qqq \leq c^a_1 \cdot \left(1 /n \right)^{c^a_2/\aK},\nonumber\\
 &~\text{then}~\myprob\left[ \omega_{u,v}\left(G_n^{0,\qqq}|_{\tilde{A}_{uv}}\right) \ge \aK / 2\middle| \aF\right] = O(n^{-3}). 
\end{align}
Finally, suppose $\aK > \aK_0 = 2N_{\max}$.  
%(and the upper bound assumption on $q$ remains the same). 
Using Eqn (\ref{eqn:caseAalmostfinal_very_sparse}), we know that if $\qqq \le c^a_1 \cdot \left(1 /n\right)^{c^a_2/\aK_0}$ and $\aK > \aK_0$ (in which case note also that $c^a_1 \cdot \left(1/n\right)^{c^a_2/\aK_0} \le c^a_1 \cdot \left(1 /n\right)^{c^a_2/\aK}$), then $$\myprob\left[ \omega_{u,v}\left(G_n^{0,\qqq}|_{\tilde{A}_{uv}}\right) \ge \aK / 2\middle| \aF\right] \le \myprob\left[\omega_{u,v}\left(G_n^{0,\qqq}|_{\tilde{A}_{uv}}\right) \ge \aK_0 / 2\middle| \aF\right] = O(n^{-3}). $$
Combining this with Eqn. (\ref{eqn:caseAalmostfinal_very_sparse}), we thus obtain that: 
\begin{align}\label{eqn:caseAfinal_very_sparse}
\text{If}~ \aK \geq 8\myBC^2 &~\text{and} ~\qqq \leq \min\left\{c^a_1 \cdot \left(1 /n \right)^{c^a_2/(2 N_{\max})} , ~c^a_1 \cdot \left(1 / n\right)^{c^a_2/\aK}\right\}, \nonumber\\
&~\text{then}~\myprob\left[\omega_{u,v}\left( G_n^{0,\qqq}|_{\tilde{A}_{uv}}\right) \ge \aK / 2\middle| \aF\right] = O(n^{-3}). 
\end{align}

%\noindent{\bf Case (\romannumeral 2): bounding the second term in Eqn. (\ref{eqn:mainresult_very_sparse}).} 
\paragraph{\bf Case (\romannumeral 2): bounding the second term in Eqn. (\ref{eqn:mainresult_very_sparse}).~}
First recall that $B_{uv} =  B_r^{\mathcal{X}_n}(u) \cup B^{\mathcal{X}_n}_r(v)$ (see Figure \ref{fig:twocases_very_sparse} (b)). 

On one hand, imagine we now build the following random graph $\tilde{G}_{uv}^{local} = (\tilde{V}, \tilde{E})$: The vertex set $\tilde{V}$ is simply ${B}_{uv}$. To construct the edge set $\tilde{E}$, first, add edges between all pairs of distinct vertices in $B^{\mathcal{X}_n}_r(u)$ and do the same thing for $B^{\mathcal{X}_n}_r(v)$; that is, every two vertices in $B^{\mathcal{X}_n}_r(u)$ or $B^{\mathcal{X}_n}_r(v)$ are now connected by an edge. Next, add edge $(u,v)$. Finally, insert each crossing edge $(x,y)$ with $x\in B^{\mathcal{X}_n}_r(u)$ and $y\in B^{\mathcal{X}_n}_r(v)$ with probability $\qqq$. 

On the other hand, consider the graph $G_n^{0,\qqq}|_{{B}_{uv}}$, the induced subgraph of $G_n^{0,\qqq}$ spanned by vertices in ${B}_{uv}$. We can imagine that the graph $G_n^{0,\qqq}|_{B_{uv}}$ was produced by first taking the induced subgraph $G_n|_{{B}_{uv}}$, and then insert crossing edges $(x,y)$ each with probability $\qqq$. Since $(u,v)$ is a long-edge, by Definition \ref{def:longedge}, we know that there are no edges between nodes in $B^{\mathcal{X}_n}_r(u)$ and $B^{\mathcal{X}_n}_r(v)$ in $G_n|_{{B}_{uv}}$. Since every two vertices in $B^{\mathcal{X}_n}_r(u)$ or $B^{\mathcal{X}_n}_r(v)$ are not necessarily connected by an edge in $G_n|_{{B}_{uv}}$, we know that
\begin{align}\label{eqn:insertiononlycaseBfirststep_very_sparse}
&\myprob\left[\omega_{u,v}\left(G_n^{0,\qqq}|_{B_{uv}}\right) \ge \aK / 2 \middle| \aF\right] 
~~\leq~~ \myprob\left[\omega_{u,v}\left(\tilde{G}_{uv}^{local}\right)\ge \aK / 2\middle| \aF \right]
\end{align}    

Using a similar argument as in case (\romannumeral 1) (the missing details can be found in Appendix \ref{appendix:detailscaseBinsertiononly_very_sparse}), we have that there exist constants $c_1^b,c_2^b >0$ which depend on the Besicovitch constant $\myBC$ and $\alpha$ such that
\begin{align*}
\text{If} ~\aK \geq 8\myBC^2 &~\text{and}~ \qqq \leq ~ c^b_1 \cdot \left(1 / n\right)^{c^b_2/\aK},\nonumber\\ &~\text{then} ~ \myprob \left[\omega_{u,v}\left(\tilde{G}_{uv}^{local}\right) \ge \aK /2 \middle| \aF \right] = O(n^{-3})
\end{align*}

Pick $\aK = 2N_{\max}  = 2 \ceil{4 /\alpha} \geq 8\myBC^2$ (by condition $\alpha \in \left(0, 1 /\myBC^2 \right]$). Note that $\aK = O(1)$ in this case. Thus, combining the above bound with Eqn. (\ref{eqn:insertiononlycaseBfirststep_very_sparse}), (\ref{eqn:caseAfinal_very_sparse}) and (\ref{eqn:mainresult_very_sparse}), there exist constants $C_1 = \min\{c_1^a, c_1^b\}$ and $C_2 = \max\{c^{a}_2, c^{b}_2\}$ such that if $\qqq$ satisfies conditions in Eqn. (\ref{eqn:type_2_qbound_very_sparse}), then 
\begin{align*}
\myprob\left[\omega_{u,v}\left(G_n^{0,\qqq}\right) \ge \aK\right] \leq \myprob\left[\omega_{u,v}\left(G_n^{0,\qqq}\right) \ge \aK \middle| \aF\right] + \myprob[\aF^c]  = O(n^{-3})
\end{align*}

Finally, by applying the union bound, this means:
\begin{align*}
\myprob\left[ \text{for all long-edge } (u,v) \text{, } \omega_{u,v}\left(G_n^{0,\qqq}\right) \ge \aK\right]  = O(n^{-1})
\end{align*}
Thus with high probability, we have that for all long-edge $(u,v)$, $\omega_{u,v}(G_n^{0,\qqq}) = O(1)$ as long as Eqn. (\ref{eqn:type_2_qbound_very_sparse}) holds. This completes the proof of Part (a) of Theorem \ref{thm:type_2_very_sparse}.

\myparagraph{\bf Proof of part (b) of Theorem \ref{thm:type_2_very_sparse}. ~} 
We use the same strategy in the proof of part (a). That is, we again try to bound the two terms on the right hand side of Eqn. (\ref{eqn:mainresult_very_sparse}) from above respectively. The key difference here is to give an alternative estimate of Eqn. (\ref{eqn:expectationX_very_sparse}) in case (\romannumeral 1) and its counterpart in case (\romannumeral 2) under the new constraint of $\qqq$.

For case (\romannumeral 1), instead of using Lemma \ref{lem:insertioncaseA_very_sparse}, we now use the following lemma, whose proof can be found in Appendix \ref{appendix:lem:insertioncaseA_very_sparse_looser}.

\begin{lemma}\label{lem:insertioncaseA_very_sparse_looser}
There exists a constant $C_3 > 0$ depending on the \Bconst{} $\myBC$ and $\alpha$ such that if $\left(1 / n \right)^{8 / (3N_{\max})} \leq \qqq < 1$ and $\aK = C_3 \floor*{\log_{1 / \qqq}n}$, then we have that $\myE[\aI] = O(n^{-3})$.
\end{lemma}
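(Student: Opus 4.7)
The plan is to exploit the relatively large value of $\qqq$ to replace the delicate term-by-term analysis of Lemma~\ref{lem:insertioncaseA_very_sparse} with a clean multinomial identity. Starting from the bound on $\myE[\aI]$ in Eqn.~(\ref{eqn:expectationX_very_sparse}), the first observation is that since $0 \le x_i \le N_{\max}$ and $\sum_i x_i = k$, we have $\sum_i x_i^2 \le N_{\max}\cdot\sum_i x_i = N_{\max} k$. Because $\qqq < 1$, this yields a \emph{uniform} bound $\qqq^{(k^2-\sum_i x_i^2)/2} \le \qqq^{k(k-N_{\max})/2}$, valid termwise as long as $k \ge N_{\max}$. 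Once this factor is pulled outside, the remaining sum over compositions $x_1+\cdots+x_m=k$ with $0 \le x_i \le N_{\max}$ collapses by Vandermonde's identity to $\binom{mN_{\max}}{k}$, since it is exactly the coefficient of $z^k$ in $((1+z)^{N_{\max}})^m = (1+z)^{mN_{\max}}$.

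Combining this with the standard estimate $\binom{mN_{\max}}{k} \le (eN_{\max}n/k)^k$ (using $m \le n$) yields
\[
\myE[\aI] \;\le\; \Bigl(\tfrac{eN_{\max}n}{k}\,\qqq^{(k-N_{\max})/2 + 2}\Bigr)^k.
\]
Writing $L := \log_{1/\qqq}n$ and taking logarithms, the requirement $\myE[\aI] = O(n^{-3})$ reduces, up to the lower-order contribution $k\log(eN_{\max}/k)$, to an inequality of the form $k(k-N_{\max})/2 \gtrsim (k+3)L$, i.e., $k \gtrsim L$ with a sufficiently large hidden constant, once $k \ge 2N_{\max}$. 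Since $\aK = C_3\floor*{L}$ and $k = \floor*{\aK/(2|\Lambda|)} - 2$, both conditions hold once $C_3$ is taken as a large enough constant depending on $|\Lambda| \le \myBC^2$ and $N_{\max} = \ceil*{4/\alpha}$.

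The main obstacle, and the reason for the technical-looking hypothesis $\qqq \ge (1/n)^{8/(3N_{\max})}$, is guaranteeing that $k \ge N_{\max}$ throughout the regime: otherwise the exponent $k(k-N_{\max})/2$ becomes negative, $\qqq^{k(k-N_{\max})/2}$ exceeds one, and the crude bound $\sum_i x_i^2 \le N_{\max} k$ no longer produces any decay. The stated lower bound on $\qqq$ forces $L \ge 3N_{\max}/8$, so choosing $C_3$ large (depending only on $\myBC$ and $\alpha$) makes $k$ comfortably exceed $N_{\max}$; the same $C_3$ then drives $\myE[\aI]$ below $n^{-3}$ uniformly across the entire range $(1/n)^{8/(3N_{\max})} \le \qqq < 1$.
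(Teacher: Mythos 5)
Your proof is correct and follows essentially the same strategy as the paper's: bound $\sum_i x_i^2$ to pull out a power of $\qqq$, collapse the multinomial sum to $\binom{mN_{\max}}{k}$, apply the standard $\binom{n}{k} \le (en/k)^k$ estimate, and choose $C_3$ large enough that $k$ dominates both $L = \log_{1/\qqq}n$ and $N_{\max}$. The one modest difference is that you use the cruder but cleaner bound $\sum_i x_i^2 \le N_{\max}k$, whereas the paper computes the exact maximum $rN_{\max}^2 + (k - rN_{\max})^2$ with $r = \floor{k/N_{\max}}$ and then re-approximates it; your route actually avoids a small loss in the paper's intermediate approximations and yields the marginally better per-$k$ exponent $k/2 - N_{\max}/2 + 2$ versus the paper's $k/2 - 3N_{\max}/2 + 1$. (One minor imprecision: the inequality $\qqq^{(k^2-\sum x_i^2)/2} \le \qqq^{k(k-N_{\max})/2}$ is valid for every $k$ --- it is equivalent to $\sum x_i^2 \le N_{\max}k$, which always holds --- not only for $k \ge N_{\max}$; that condition is what makes the resulting bound useful, not what makes it valid.)
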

%The proof of this lemma can be found in Appendix \ref{appendix:lem:insertioncaseA_very_sparse_looser}.

Now choose such $C_3$ as specified in Lemma \ref{lem:insertioncaseA_very_sparse_looser}. We know that the following holds.
\begin{align}\label{eqn:caseAalmostfinal_very_sparse_looser}
\text{If}~ &\left(1 / n \right)^{8 / (3N_{\max})} \leq \qqq < 1, \nonumber\\
&\text{then}~\myprob\left[ \omega_{u,v}\left(G_n^{0,\qqq}|_{\tilde{A}_{uv}}\right) \ge C_3\floor*{\log_{1 /\qqq}n} / 2\middle| \aF\right] = O(n^{-3}). 
\end{align}

For case (\romannumeral 2), we know that if event $\aF$ has already happened, then $|B_{uv}| \leq 2N_{\max}$, where $|B_{uv}|$ denotes the cardinality of set $B_{uv}$. Note that if $\left(1 /n \right)^{C_3 / (4N_{\max}+C_3)} \leq \qqq < 1$, then
$$C_3\floor*{\log_{1 /\qqq}n} / 2 \geq 2N_{\max} \geq |B_{uv}|.$$ Hence, we obtain that:
\begin{align}\label{eqn:caseBalmostfinal_very_sparse_looser}
\text{If}~ &\left(1 /n \right)^{C_3 /(4N_{\max}+C_3)} \leq \qqq < 1, \nonumber\\
&~~\text{then}~\myprob\left[ \omega_{u,v}\left(G_n^{0,\qqq}|_{B_{uv}}\right) \ge C_3\floor*{\log_{1 / \qqq}n} /2\middle| \aF\right] = 0. 
\end{align}
Set $\xi = \min\left\{ 8 /(3N_{\max}), C_3 / (4N_{\max} + C_3)  \right\}$, which is also a constant. Thus, combining Eqn. (\ref{eqn:caseBalmostfinal_very_sparse_looser}), (\ref{eqn:caseAalmostfinal_very_sparse_looser}) and (\ref{eqn:mainresult_very_sparse}), we know that if $\left(1 / n \right)^{\xi} \leq \qqq < 1$, then
\begin{align*}
\myprob\left[\omega_{u,v}\left(G_n^{0,\qqq}\right) \ge C_3\floor*{\log_{1 /\qqq}n} \middle| \aF\right]  = O(n^{-3}).
\end{align*}
Finally, by a similar argument in the proof for Part (a) using the law of total probability and union bound, we can show that with high probability, we have that for any long-edge $(u,v)$, $\omega_{u,v}\left( G_n^{0,\qqq}\right) \lesssim \log_{1 / \qqq}n$. This completes the proof of Theorem \ref{thm:type_2_very_sparse}.
%\end{proof}

\subsubsection{Finishing the proof of Part (I) of Theorem \ref{thm:insertion_only_RGG}}
\label{subsubsec:remaining}~

Based on the discussion of Type-\rom{1} cliques as well as Theorem \ref{thm:type_2_very_sparse} for Type-\rom{2} cliques, we have the following corollary regarding the upper bound of $\omega(G_n^{0,\qqq})$ in the subcritical regime.

\begin{cor}\label{cor:upper_bound_summary_very_sparse}
Given a $(0,\qqq)$-perturbed noisy random geometric graph $G_n^{0,\qqq}$ in \mysetting{} and suppose that $nr^d \leq n ^{-\alpha}$ for some fixed $\alpha \in \left(0, 1 / \myBC^2\right]$, then
\begin{itemize}
\item[(i)] there exist two constants $C_1, C_2 > 0$ such that if $\qqq \leq C_1 \left(1 /n\right)^{C_2}$, then a.s.
\begin{align*}
\omega\left(G_n^{0,\qqq}\right) \lesssim 1
\end{align*}
\item[(ii)] and there exists a constant $\xi > 0$ such that if $\left(1 / n\right)^{\xi} \leq \qqq < 1$, then a.s.
\begin{align*}
\omega\left(G_n^{0,\qqq}\right) \lesssim \log_{1 /\qqq}{n} 
\end{align*} 
\end{itemize}
\end{cor}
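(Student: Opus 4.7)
The plan is to exploit the Type-\rom{1}/Type-\rom{2} dichotomy already set up in Section \ref{sec:insertion_only_very_sparse} and reduce the corollary to a direct combination of Lemma \ref{lem:num_points_3rball_very_sparse} (handling Type-\rom{1}) and Theorem \ref{thm:type_2_very_sparse} (handling Type-\rom{2}), via a simple union bound. Since any clique $C$ in $G_n^{0,\qqq}$ either contains no long-edges (Type-\rom{1}) or contains at least one long-edge (Type-\rom{2}), we always have
\begin{align*}
\omega\left(G_n^{0,\qqq}\right) \le \max\left\{ M_{W_3},~~\max_{(u,v) \text{ long-edge in } G_n^{0,\qqq}} \omega_{u,v}\left(G_n^{0,\qqq}\right) \right\}.
\end{align*}
Indeed, if $C$ is Type-\rom{1}, then every pair of its vertices is within distance $3r$, so $C$ lies in the intersection of $\mathcal{X}_n$ with a ball of radius $3r$ centered at any fixed vertex of $C$, and therefore $|C| \le M_{W_3}$; if $C$ is Type-\rom{2}, then $|C| \le \omega_{u,v}(G_n^{0,\qqq})$ for any long-edge $(u,v)\in C$.

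First I would invoke Lemma \ref{lem:num_points_3rball_very_sparse}, which in the subcritical regime $nr^d \le n^{-\alpha}$ already gives $M_{W_3} \le \lceil 4/\alpha\rceil = O(1)$ with probability $1 - O(n^{-3})$, in particular asymptotically almost surely. Thus the Type-\rom{1} contribution to $\omega(G_n^{0,\qqq})$ is bounded by a constant (depending only on $\alpha$) a.s., regardless of $\qqq$.

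Next I would feed the two parts of Theorem \ref{thm:type_2_very_sparse} into the two cases of the corollary. For part (i), choose the constants $C_1, C_2$ supplied by Theorem \ref{thm:type_2_very_sparse}(a); then w.h.p. $\omega_{u,v}(G_n^{0,\qqq}) \lesssim 1$ for every long-edge $(u,v)$, so both terms on the right-hand side of the display above are $O(1)$ a.s., yielding $\omega(G_n^{0,\qqq}) \lesssim 1$. For part (ii), take $\xi$ from Theorem \ref{thm:type_2_very_sparse}(b); then w.h.p.\ every long-edge satisfies $\omega_{u,v}(G_n^{0,\qqq}) \lesssim \log_{1/\qqq}n$. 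In the regime $\qqq \ge (1/n)^\xi$ one has $\log_{1/\qqq}n \ge 1/\xi$, a positive constant, so the $O(1)$ Type-\rom{1} bound is absorbed into the Type-\rom{2} bound, and we conclude $\omega(G_n^{0,\qqq}) \lesssim \log_{1/\qqq}n$ a.s.

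There is essentially no new obstacle here: the heavy machinery (the \myWSP{} construction and the first/second moment estimates) has already been carried out inside Theorem \ref{thm:type_2_very_sparse}. The only points that require care are (a) verifying that Type-\rom{1} cliques indeed live in a single $3r$-ball so that the scan statistic $M_{W_3}$ applies, and (b) checking that in part (ii) the Type-\rom{1} constant bound is dominated by $\log_{1/\qqq}n$ throughout the allowed range of $\qqq$; both are routine.
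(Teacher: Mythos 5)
Your proposal is correct and follows the same route the paper takes: the paper's own proof is the single sentence asserting that part (i) follows by combining Lemma~\ref{lem:num_points_3rball_very_sparse} with Theorem~\ref{thm:type_2_very_sparse}(a), and part (ii) by combining the same lemma with Theorem~\ref{thm:type_2_very_sparse}(b). Your writeup merely makes explicit the Type-\rom{1}/Type-\rom{2} max-decomposition and the observation that in the regime $\qqq \ge (1/n)^\xi$ the constant Type-\rom{1} bound is dominated by $\log_{1/\qqq} n \ge 1/\xi$, both of which the paper leaves implicit.
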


Part (i) of Corollary \ref{cor:upper_bound_summary_very_sparse} can be derived by combining Lemma \ref{lem:num_points_3rball_very_sparse} and part (a) of Theorem \ref{thm:type_2_very_sparse}, while part (ii) of Corollary \ref{cor:upper_bound_summary_very_sparse} can be derived by combining Lemma \ref{lem:num_points_3rball_very_sparse} and part (b) of Theorem \ref{thm:type_2_very_sparse}.

%%%%%%%%%%%%%%%%%%%%%%%%%%%%%%%%%%%%%%%%%%%%%%%%%%%%%%%%%%%%%%%%%%%%%%%%%%%%%%%%%%%
To derive a lower bound of $\omega(G_n^{0,\qqq})$, we need the following result on the clique number of \myER{} random graphs (proof can be found in Appendix \ref{appendix:thm:ER_lowerbound}). 

\begin{lemma}\label{thm:ER_lowerbound}
For \myER{} random graph $G(n,p)$ with $\left(1 / n\right)^{1 /11} \leq p \leq (1 /n)^{1 / \sqrt[4]{n}}$, we have a.s. $\omega(G(n,p)) > \floor*{\log_{1/p}{n}}.$
\end{lemma}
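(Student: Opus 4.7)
The plan is to apply the second moment method. Set $k := \lfloor \log_{1/p} n \rfloor + 1$, so that $p^{k-1} \geq 1/n > p^{k}$, and let $X$ denote the number of $k$-element subsets of $[n]$ that span a clique in $G(n,p)$. It suffices to show $\myprob[X \geq 1] \to 1$ as $n \to \infty$.

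\emph{First moment.} Using $p^{\binom{k}{2}} = (p^{k-1})^{k/2} \geq n^{-k/2}$ together with $\binom{n}{k} \geq (n/k)^{k}$, I would obtain $\myE[X] \geq (\sqrt{n}/k)^{k}$. In the given window the integer $k$ lies in $[12, n^{1/4}+1]$: at the lower endpoint $p \sim n^{-1/11}$ one has $k=12$ and $\myE[X] \gtrsim n^{6}$, and at the upper endpoint $p \sim n^{-1/\sqrt[4]{n}}$ one has $k \sim n^{1/4}$ and $\log \myE[X] \gtrsim \tfrac{1}{4} n^{1/4} \log n$. In either case $\myE[X] \to \infty$ (at least polynomially in $n$).

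\emph{Second moment.} Grouping ordered pairs of $k$-subsets by intersection size $i$ yields
\begin{align*}
\frac{\myE[X^{2}]}{\myE[X]^{2}} \;=\; \sum_{i=0}^{k} a_{i}, \qquad a_{i} \;:=\; \frac{\binom{k}{i}\binom{n-k}{k-i}}{\binom{n}{k}}\, p^{-\binom{i}{2}}.
\end{align*}
The endpoint $i=0$ contributes at most $1$ and $i=k$ contributes exactly $1/\myE[X] = o(1)$. For $1 \leq i \leq k-1$, I would combine the standard estimates $\binom{k}{i}\leq (ek/i)^{i}$, $\binom{n-k}{k-i}/\binom{n}{k} \leq (k/(n-k))^{i}$, and $\log(1/p)\leq \log n/(k-1)$ (which follows from $p^{k-1}\geq 1/n$) to obtain a bound of the form
\begin{align*}
\log a_{i} \;\leq\; i\Big(2 - \log i - \tfrac{\log n}{2}\cdot\tfrac{k-i}{k-1}\Big) + O(1).
\end{align*}
The constraint $k\leq n^{1/4}+1$ gives $2\log k \leq \tfrac{1}{2}\log n + o(\log n)$, which is what allows the $(k-i)/(k-1)$ factor to dominate; combined with $k \geq 12$, this makes the right-hand side strongly negative throughout $1\leq i\leq k-1$, and a geometric-style summation yields $\sum_{i=1}^{k-1} a_{i} = o(1)$.

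\emph{Conclusion.} Chebyshev's inequality then gives
\begin{align*}
\myprob[X = 0] \;\leq\; \frac{\myE[X^{2}]}{\myE[X]^{2}} - 1 \;=\; o(1),
\end{align*}
so a.s.\ $\omega(G(n,p)) \geq k > \lfloor \log_{1/p} n \rfloor$, as required.

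The main obstacle is the uniform control of the intermediate terms $a_i$ across the entire window $n^{-1/11} \leq p \leq n^{-1/\sqrt[4]{n}}$. Because both $p$ and $k$ vary with $n$ and $k$ can grow all the way up to $n^{1/4}$, one cannot directly invoke classical two-point concentration results for fixed $p$. The lower bound $p \geq n^{-1/11}$ is used to keep $k\geq 12$ (so that the additive constants in the $\log a_i$ estimate are absorbed), and the upper bound $p \leq n^{-1/\sqrt[4]{n}}$ is used to keep $2\log k/\log n$ below $\tfrac{1}{2}$, which is precisely what lets the $(k-i)/(k-1)$ factor close the estimate uniformly over $i$.
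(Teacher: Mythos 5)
Your argument is correct and follows essentially the same route as the paper: second moment method on the count of $k$-cliques, with the ratio $\myE[X^2]/\myE[X]^2$ (equivalently the paper's $\Delta^*/\myE[Z]$) decomposed by intersection size and bounded term by term using $\binom{k}{i}\leq (ek/i)^i$, $\binom{n-k}{k-i}/\binom{n}{k}\leq (k/(n-k))^i$, and $p^{-1}\leq n^{1/(k-1)}$. The one substantive discrepancy is your choice $k=\lfloor\log_{1/p}n\rfloor+1$ rather than the paper's $k=\lfloor\log_{1/p}n\rfloor$: since the lemma claims the \emph{strict} inequality $\omega>\lfloor\log_{1/p}n\rfloor$, your choice is the one that actually delivers the stated conclusion (the paper's choice only gives $\omega\geq\lfloor\log_{1/p}n\rfloor$, an apparent off-by-one). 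Your displayed bound $\log a_i\leq i\bigl(2-\log i-\tfrac{\log n}{2}\cdot\tfrac{k-i}{k-1}\bigr)+O(1)$ looks at first as if it is missing the $2i\log k$ contribution, but after writing the correct coefficient $\tfrac{2k-i-1}{2(k-1)}=\tfrac12+\tfrac12\cdot\tfrac{k-i}{k-1}$ and absorbing $i\bigl(2\log k-\tfrac12\log n\bigr)\leq 0$ (valid precisely because $k\lesssim n^{1/4}$), it is equivalent, so that step is sound; you correctly identify this as the role of the upper bound on $p$. Where you gesture at ``a geometric-style summation,'' the paper does a bit more work: it defines $h(\ell)=\log_n g(\ell)$, analyzes $h''$, and shows the maximum over $\ell\in[2,k-1]$ is attained at an endpoint, then verifies $h(2),h(k-1)<-1/2$. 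Your endpoint checks are the same in spirit, but a complete write-up would need that convexity (or an equivalent monotonicity) argument to rule out a spike at an intermediate $i$, since $a_i$ is not literally geometric in $i$.
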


Note that $\qqq$ here is no longer a fixed constant as in the standard literature \cites{bollobas2001random, alon2016probabilistic}, thus the well-known $\omega(G(n,p)) \sim 2\log_{1/p}n$ statement cannot be directly applied here. Not surprisingly, the standard second moment method \cite{alon2016probabilistic} is used here, but the calculation is different. Details of the proof can be found in Appendix \ref{appendix:thm:ER_lowerbound}.

Easy to see that $\myprob[\omega(G_n^{0,\qqq}) \ge K] \ge \myprob[\omega(G(n,\qqq)) \ge K] $ for any positive integer $K$. The following corollary of Lemma \ref{thm:ER_lowerbound} gives a lower bound of $G_n^{0,\qqq}$ regardless of which regime $nr^d$ belongs to.

\begin{cor}\label{thm:q_insertion_lowerbound}
Given a $(0,\qqq)$-perturbed noisy random geometric graph $G_n^{0,\qqq}$ in \mysetting{} and suppose that $\left( 1 / n\right)^{1/11} \leq \qqq  \leq (1/n)^{1/\sqrt[4]{n}}$, then we have a.s.
\begin{align*}
\omega\left(G_n^{0,\qqq}\right)  > \floor*{\log_{1 /\qqq}{n}}
\end{align*}
\end{cor}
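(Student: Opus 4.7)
The plan is to reduce the statement to Lemma \ref{thm:ER_lowerbound} by a straightforward monotone coupling that exhibits an \myER{} random graph $G(n,\qqq)$ as a subgraph of $G_n^{0,\qqq}$ on the same vertex set. Because the clique number is monotone under edge addition, the a.s.\ lower bound for $\omega(G(n,\qqq))$ transfers immediately.

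Concretely, I would first condition on the vertex positions $\mathcal{X}_n = \{X_1,\dots,X_n\}$, so that the underlying random geometric graph $G_n$ is a fixed (deterministic) graph. For each unordered pair $\{X_i,X_j\}$, draw an independent $\mathrm{Uniform}[0,1]$ random variable $U_{ij}$, and define two graphs on the vertex set $\mathcal{X}_n$:
\begin{align*}
E(H_1) &:= \{ (X_i,X_j) : U_{ij} \le \qqq \}, \\
E(H_2) &:= E(G_n) \cup \{ (X_i,X_j) \notin E(G_n) : U_{ij} \le \qqq \}.
\end{align*}
Marginally, $H_1$ is distributed as $G(n,\qqq)$, while $H_2$ is distributed as $G_n^{0,\qqq}$ (since $\pp=0$, every edge of $G_n$ is retained, and every non-edge of $G_n$ is independently inserted with probability $\qqq$). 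Moreover, by construction $E(H_1) \subseteq E(H_2)$ deterministically: if $(X_i,X_j) \in E(H_1)$, then either $(X_i,X_j) \in E(G_n) \subseteq E(H_2)$ or $U_{ij} \le \qqq$ with $(X_i,X_j) \notin E(G_n)$, which also places the edge in $E(H_2)$. Hence any clique in $H_1$ is a clique in $H_2$, so $\omega(H_2) \ge \omega(H_1)$ pointwise.

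Now I would verify the range hypothesis: the assumption $(1/n)^{1/11} \le \qqq \le (1/n)^{1/\sqrt[4]{n}}$ is exactly the hypothesis of Lemma \ref{thm:ER_lowerbound}, which therefore yields
\begin{align*}
\myprob\!\left[\,\omega(H_1) > \floor{\log_{1/\qqq} n}\,\right] \longrightarrow 1 \quad\text{as } n\to\infty.
\end{align*}
Combined with $\omega(H_2) \ge \omega(H_1)$ and the fact that $H_2 \stackrel{d}{=} G_n^{0,\qqq}$, this gives
\begin{align*}
\myprob\!\left[\,\omega(G_n^{0,\qqq}) > \floor{\log_{1/\qqq} n}\,\right] \longrightarrow 1,
\end{align*}
which is the claim. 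Since the bound on $H_1$ already holds unconditionally on the vertex positions (Lemma \ref{thm:ER_lowerbound} is a statement purely about $G(n,\qqq)$), no further integration over $\mathcal{X}_n$ is needed.

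There is essentially no obstacle once Lemma \ref{thm:ER_lowerbound} is granted; the only thing to be careful about is that the coupling preserves the correct marginal law of $G_n^{0,\qqq}$, which is immediate from the independence of the $U_{ij}$'s and the fact that the geometric graph $G_n$ and the noise are independent. The nontrivial content is entirely contained in Lemma \ref{thm:ER_lowerbound}, whose proof (by a second-moment argument adapted to the non-constant edge probability regime $\qqq = \qqq(n)$) is deferred to the appendix.
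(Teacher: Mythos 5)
Your proof is correct and follows essentially the same approach as the paper: the paper also observes that $\myprob[\omega(G_n^{0,\qqq}) \ge K] \ge \myprob[\omega(G(n,\qqq)) \ge K]$ and then invokes Lemma \ref{thm:ER_lowerbound}. The only difference is that you spell out the monotone coupling explicitly, which the paper dismisses as ``easy to see.''
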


Note that $(1/n)^{1 /\sqrt[4]{n}} \rightarrow 1$ as $n \rightarrow \infty$. Thus, there exists a constant $C_3 \in (0,1)$ (very close to $1$) such that $C_3 \leq (1 /n)^{1 /\sqrt[4]{n}}$ for sufficiently large $n$. Also note that the following monotone property holds.
\begin{align*}
\text{For any $S > 0$ and $0 \leq q_1 < q_2 < 1$, } \myprob\left[\omega\left(G_n^{0,q_1}\right) \geq S \right] \leq \myprob\left[\omega\left(G_n^{0,q_2}\right) \geq S \right]     
\end{align*}
And by Corollary \ref{cor:upper_bound_summary_very_sparse} (b), we know that $\omega\left(G_n^{0, n^{-\xi}} \right) = O\left(\log_{n^{-\xi}}n\right) = O(1)$ a.s.. Easy to see that there exists a constant $C_1'$ such that $\qqq \leq \left(1 /n\right)^{C_1'}$ implies $\qqq \leq C_1 \left(1 /n\right)^{C_2}$. Also notice that $\log_{1/\qqq}n = \Theta(1)$ for $\qqq \in \left( \left(1 /n\right)^{C_1'}, \left(1 /n\right)^{\xi}\right)$ (if this interval exists). Thus, the lower bound of $\qqq$ in the condition of part (b) of Corollary \ref{cor:upper_bound_summary_very_sparse} (which is $\left(1 / n\right)^{\xi}$) can be extended to $ \left( 1 /n\right)^{C_1'}$ and the conclusion still holds. Combining these facts with Corollary \ref{cor:upper_bound_summary_very_sparse} and Corollary \ref{thm:q_insertion_lowerbound} concludes the proof of part (\rom{1}) of Theorem \ref{thm:insertion_only_RGG}.

%%%%%%%%%%%%%%%%%%%%%%%%%%%%%%%%%%%%%%%%%%%%%%%%%%%%%%%%%%%%%%%%%%%%%%%%%%%%%%%%%%%%%%%%%%%%%%%%%%%%%%%%%%
\subsection{Proof of Part (\rom{2}) --- subcritical regime}
\label{sec:insertion_only_quite_sparse}~

In this section, we discuss the order of $\omega(G_n^{0,\qqq})$ in the regime $n^{-\epsilon} \ll nr^d \ll \log{n}$ for all $\epsilon > 0$. Again, we first derive an upper bound of $\omega(G_n^{0,\qqq})$ by considering two types of cliques (Type-\rom{1} and Type-\rom{2}) introduced in Section \ref{sec:insertion_only_very_sparse}. The idea of the proof in this section is similar to the one in Section \ref{sec:insertion_only_very_sparse}, although the details vary a little. \\

\paragraph{\bf Type-\rom{1} cliques.~}
Recall that $W_3 = B_{3}(\origin)$. The following lemma gives an upper bound of the number of vertices in any $3r-$ball.

\begin{lemma}\label{lem:num_points_3rball_quite_sparse}
If $n^{-\epsilon} \ll nr^d \ll \log{n}$ for all $\epsilon > 0$, then 
\begin{align*}
\myprob\left[M_{W_3} \leq \frac{5\log{n}}{\log \left(\log{n} / (\sigma \theta 6^d nr^d)\right)}\right] = 1 - O(n^{-3}).
\end{align*}
\end{lemma}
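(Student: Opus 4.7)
The plan is to follow the strategy used in the proof of Lemma \ref{lem:num_points_3rball_very_sparse}, but with an $n$-dependent threshold in place of the constant $\lceil 4/\alpha \rceil$. First I would reduce to a single-ball tail bound via the same union trick used there: if some $3r$-ball contains $k+1$ points of $\mathcal{X}_n$, then the $6r$-ball centered at any one of those points also contains at least $k+1$ points, so
\[
\myprob\bigl[M_{W_3} \geq k+1\bigr] \;\leq\; n\,\myprob\bigl[\mathcal{N}(B_{6r}(X_1)) \geq k+1\bigr].
\]
Then, since the density of $\nu$ is bounded by $\sigma$, conditional on $X_1$ the quantity $\mathcal{N}(B_{6r}(X_1))-1$ is dominated by $\mathrm{Bin}(n-1,\sigma\theta(6r)^d)$, so with $\mu := \sigma\theta 6^d\, nr^d$ the binomial tail bound in Lemma \ref{lem:binomial_upperbound} yields $\myprob\bigl[\mathcal{N}(B_{6r}(X_1)) \geq k+1\bigr] \leq (e\mu/k)^k$ whenever $k \geq \mu$.

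Next, I would choose $k := \lfloor 5\log n / \log(\log n/\mu)\rfloor$ and verify the bound $n\cdot(e\mu/k)^k = O(n^{-3})$. Writing $L := \log n$, the regime assumption $nr^d \ll \log n$ gives $\mu \ll L$; consequently $L/\mu \to \infty$, hence $k \to \infty$, and moreover $k/\mu = 5(L/\mu)/\log(L/\mu) \to \infty$, so the side condition $k \geq \mu$ holds for large $n$. The key estimate is
\[
\log\!\left(\frac{k}{e\mu}\right) \;=\; \log(L/\mu) \;-\; \log\log(L/\mu) \;+\; O(1) \;\sim\; \log(L/\mu),
\]
which gives $k\log(k/(e\mu)) \sim (5L/\log(L/\mu))\cdot \log(L/\mu) = 5\log n$. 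Therefore $n\cdot(e\mu/k)^k = \exp\bigl(\log n - k\log(k/(e\mu))\bigr) = O(n^{-4}) = O(n^{-3})$ for sufficiently large $n$, which combined with the union bound finishes the proof.

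The main obstacle I expect is just cleanly bookkeeping the asymptotic calculation and ensuring the side condition $k \geq \mu$ and the dominant-term extraction $\log(k/(e\mu)) \sim \log(L/\mu)$ are valid uniformly across the stated regime. Notice that only the upper bound $nr^d \ll \log n$ is actually needed here in order to drive $L/\mu \to \infty$; the lower bound $n^{-\epsilon} \ll nr^d$ plays no role in this particular tail estimate (but will be used later when one wants a matching lower bound on $\omega(G_n^{0,\qqq})$ via a scan-statistic argument).
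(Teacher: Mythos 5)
Your overall strategy is exactly the standard Chernoff-type computation the paper alludes to when it says ``the argument is rather standard'': union bound over $n$ balls of radius $6r$ centered at the data points, single-ball binomial tail bound via Lemma~\ref{lem:binomial_upperbound}, and the threshold $k = \lfloor 5\log n / \log(\log n/\mu)\rfloor$ with $\mu = \sigma\theta 6^d nr^d$. The asymptotics $\log\bigl(k/(e\mu)\bigr) = \log(L/\mu) - \log\log(L/\mu) + O(1) \sim \log(L/\mu)$ and hence $k\log\bigl(k/(e\mu)\bigr) \sim 5\log n$ are correct given the lemma's hypotheses, and the conclusion $n\cdot(e\mu/k)^k = O(n^{-3})$ follows.

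The one genuine error is in the final paragraph. The deduction ``$L/\mu\to\infty$, hence $k\to\infty$'' is a non sequitur: $k = 5L/\log(L/\mu)$ can stay bounded even though $L/\mu\to\infty$. For instance, $nr^d \sim n^{-1/2}$ (which fails the lemma's lower bound but still has $\mu \ll L$) gives $L/\mu \sim n^{1/2}\log n \to\infty$ yet $\log(L/\mu) \sim \tfrac12\log n$, so $k \to 10$, a constant. Relatedly, the claim that the lower bound $n^{-\epsilon}\ll nr^d$ ``plays no role'' in this tail estimate is wrong. That hypothesis is what forces $L/\mu \lesssim n^{\epsilon}\log n$ for every $\epsilon>0$, hence $\log(L/\mu) = o(\log n)$ and therefore $k\to\infty$. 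If one dropped it and let $nr^d$ decay super-polynomially, the threshold $5\log n/\log(\log n/\mu)$ would tend to $0$; since $M_{W_3}\geq 1$ deterministically, the stated event would then have probability $0$, not $1-O(n^{-3})$. So the lower bound on $nr^d$ is doing real work in making the statement true, not just in the matching lower bound on $\omega(G_n^{0,\qqq})$.
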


The argument is rather standard, relying only on Chernoff--Hoeffding bounds, so we omit the proof. \\

\bigskip

\paragraph{\bf Type-\rom{2} cliques.~}
Recall that $W_1 = B_{1}(\origin)$. By using a similar argument as the one used for Lemma \ref{lem:num_points_3rball_quite_sparse}, we can get the following lemma which gives an upper bound for the number of points in any $r-$ball for the regime of $nr^d$ under discussion.
\begin{lemma}\label{lem:num_points_1rball_quite_sparse}
If $n^{-\epsilon} \ll nr^d \ll \log{n}$ for all $\epsilon > 0$, then 
\begin{align*}
\myprob\left[M_{W_1} \leq \frac{5\log{n}}{\log{\left(\log{n} / (\sigma \theta 2^d nr^d)\right)}}\right] = 1 - O(n^{-3})
\end{align*}
\end{lemma}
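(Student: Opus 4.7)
The plan is to mirror the argument for Lemma \ref{lem:num_points_3rball_quite_sparse}, adapted from $3r$-balls to $r$-balls. The first step is a standard discretization: since $M_{W_1} = \max_{x \in \R^d} \mathcal{N}(B_r(x))$ is a supremum over uncountably many centers, note that if some ball $B_r(x)$ contains at least $k+1$ points of $\mathcal{X}_n$, then picking any $X_i \in B_r(x)\cap \mathcal{X}_n$ gives $\mathcal{N}(B_{2r}(X_i))\ge k+1$. Hence, by a union bound over the $n$ sample points,
\begin{align*}
\myprob[M_{W_1}\ge k+1]\;\le\;\myprob\bigl[\exists\,i:\mathcal{N}(B_{2r}(X_i))\ge k+1\bigr]\;\le\;n\,\myprob[\mathcal{N}(B_{2r}(X_1))\ge k+1].
\end{align*}

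Next, since $\nu$ has density bounded by $\sigma$, the volume bound $\mathrm{Vol}(B_{2r}(x))=\theta (2r)^d$ implies that conditional on $X_1$, the number of other points in $B_{2r}(X_1)$ is dominated by $\mathrm{Bin}(n-1,\sigma\theta 2^d r^d)$, so $\mathcal{N}(B_{2r}(X_1))$ is dominated by $1+\mathrm{Bin}(n-1,\sigma\theta 2^d r^d)$, which has mean $\mu\le\sigma\theta 2^d n r^d$. Applying the upper bound in Lemma \ref{lem:binomial_upperbound} yields
\begin{align*}
\myprob[\mathcal{N}(B_{2r}(X_1))\ge k+1]\;\le\;\left(\frac{e\,\sigma\theta 2^d n r^d}{k}\right)^{\!k}.
\end{align*}

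Finally, I would set $k:=\bigl\lceil 5\log n /\log(\log n/(\sigma\theta 2^d nr^d))\bigr\rceil$ and show $n(e\mu/k)^k=O(n^{-3})$, equivalently $k\log(k/(e\mu))\ge 4\log n$ for large $n$. The regime assumption $n^{-\epsilon}\ll nr^d\ll\log n$ guarantees that $L:=\log(\log n/\mu)\to\infty$ while $L=o(\log n)$; writing $k\approx 5\log n/L$ gives $\log(k/(e\mu))=L+\log(5\log n/L)-L\cdot L^{-1}\cdot(\text{lower order})-1=L(1+o(1))$, so $k\log(k/(e\mu))=(5+o(1))\log n\ge 4\log n$ eventually. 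Combining with the union bound above gives $\myprob[M_{W_1}\ge k+1]\le n\cdot e^{-4\log n}=O(n^{-3})$.

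The only mildly delicate point is checking that the denominator $\log(\log n/(\sigma\theta 2^d nr^d))$ really tends to infinity and controls $\log(k/(e\mu))$ up to a $(1+o(1))$ factor; this is where both halves of the hypothesis $n^{-\epsilon}\ll nr^d\ll\log n$ are used (the upper half ensures $\log n/\mu\to\infty$, the lower half ensures $\log \mu=o(\log n)$ so that $k/\mu=\omega(1)$ and a factor of $e$ or $5\log n$ inside the logarithm is absorbed). The rest is routine, and the constant $5$ in the statement is a convenient round choice that comfortably beats the $4$ needed for an $O(n^{-3})$ tail after the union bound.
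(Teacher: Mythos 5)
Your proof is correct and matches the argument the paper has in mind: the paper omits the proof of this lemma as a standard Chernoff--Hoeffding calculation, referring to the analogous Lemma \ref{lem:num_points_3rball_quite_sparse} and to the worked-out subcritical case (Lemma \ref{lem:num_points_3rball_very_sparse}), and your discretize-to-sample-points union bound followed by the binomial tail estimate with $k\approx 5\log n/\log\bigl(\log n/(\sigma\theta 2^d nr^d)\bigr)$ is exactly that argument adapted from $3r$-balls to $r$-balls. The only cosmetic slip is that $k$ should be taken as $\lfloor\cdot\rfloor$ rather than $\lceil\cdot\rceil$ so that the event $\{M_{W_1}>5\log n/L\}$ coincides with $\{M_{W_1}\ge k+1\}$ for the integer-valued $M_{W_1}$, but this changes none of the asymptotics.
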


Again, the argument is standard, and we omit the proof.\\

\begin{theorem}\label{thm:type_2_quite_sparse}
Given an $(0,\qqq)$-perturbed noisy random geometric graph $G_n^{0,\qqq}$ in \mysetting{} and suppose that $n^{-\epsilon} \ll nr^d \ll \log{n}$ for all $\epsilon > 0$, then
\begin{itemize}\denselist
\item[(a)] there exist constants $C_1, C_2 > 0$ which depend on the \Bconst{} $\myBC$ such that if 
\begin{align}\label{eqn:type_2_qbound_quite_sparse}
\qqq \leq C_1 \cdot \left(nr^d /\log n\right)^{C_2}
\end{align}
 then, with high probability, for all long-edge $(u,v)$ in $G_n^{0,\qqq}$, its edge clique number 
\begin{align*}
\omega_{u,v}(G_n^{0,\qqq}) \lesssim \frac{\log n}{\log \left(\log n / nr^d \right)}. 
\end{align*}
 
\item[(b)] and there exists a constant $\xi$ which depends on the \Bconst{} $\myBC$ such that if $\left(nr^d /\log n \right)^{\xi} \leq \qqq < 1$, then, with high probability, for all long-edge $(u,v)$ in $G_n^{0,\qqq}$, its edge clique number $\omega_{u,v}(G_n^{0,\qqq})  \lesssim \log_{1 /\qqq}n$.  
\end{itemize}

\end{theorem}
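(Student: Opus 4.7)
\medskip
\noindent\textbf{Proof plan for Theorem \ref{thm:type_2_quite_sparse}.}
The approach will mirror the argument for Theorem \ref{thm:type_2_very_sparse} in the preceding subsection, but with the key constant $N_{\max}$ now replaced by the slowly growing quantity
\[
N_{\max} \;:=\; \ceil*{\frac{5\log n}{\log\bigl(\log n / (\sigma\theta 2^d nr^d)\bigr)}},
\]
which, by Lemma \ref{lem:num_points_1rball_quite_sparse}, uniformly upper bounds $\mathcal{N}(B_r(x))$ over all translates with probability $1-O(n^{-3})$. Fix a long edge $(u,v)$ and, as in Section \ref{sec:insertion_only_very_sparse}, split the vertex set as $\mathcal{X}_n = \tilde{A}_{uv} \sqcup B_{uv}$ where $B_{uv} = B_r^{\mathcal{X}_n}(u) \cup B_r^{\mathcal{X}_n}(v)$ and $\tilde{A}_{uv} = \{u,v\} \cup (\mathcal{X}_n \setminus B_{uv})$. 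Condition on the event $\aF$ that every $r$-ball centered at a sample point contains at most $N_{\max}$ points of $\mathcal{X}_n$, and, for a threshold $\aK$ to be chosen, apply the pigeonhole bound (\ref{eqn:mainresult_very_sparse}) to reduce the problem to estimating $\myprob[\omega_{u,v}(G_n^{0,\qqq}|_{\tilde{A}_{uv}}) \geq \aK/2 \mid \aF]$ and $\myprob[\omega_{u,v}(G_n^{0,\qqq}|_{B_{uv}}) \geq \aK/2 \mid \aF]$.

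For the $\tilde{A}_{uv}$-side, I would invoke Theorem \ref{lem:BCLdoubling} to obtain a well-separated clique-partition family $\mathcal{P} = \{P_i\}_{i\in\Lambda}$ of $A_{uv}$ of constant size $|\Lambda| \leq \myBC^2$, so that any clique splits across the $\tilde{P}_i$s. On each $\tilde{P}_i$ the clique-partition $C_1^{(i)} \sqcup \cdots \sqcup C_{m_i}^{(i)}$ has the property that inter-cluster edges in $G_n^{0,\qqq}$ are all $\qqq$-insertions, which yields the same multinomial expression as in (\ref{eqn:expectationX_very_sparse}) for $\myE[\aI]$, where $\aI$ counts $(k+2)$-cliques containing $(u,v)$ in $\tilde{P}_i$ with $k = \lfloor \aK/(2|\Lambda|)\rfloor - 2$. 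The central technical step is to establish an analogue of Lemma \ref{lem:insertioncaseA_very_sparse} in which $N_{\max}$ is allowed to grow; for part (a), choosing $\aK = \Theta(N_{\max})$ (so $k \sim N_{\max}/(2\myBC^2)$) and requiring $\qqq \leq C_1 (nr^d/\log n)^{C_2}$ should make each of the three terms in the minimum of (\ref{eqn:qbound1_very_sparse}) — which depend on $k$, $m\leq n$, and $N_{\max}$ — force $\myE[\aI] = O(n^{-3})$. For part (b), a looser bound in the spirit of Lemma \ref{lem:insertioncaseA_very_sparse_looser} should give $\aK = C_3 \lfloor \log_{1/\qqq} n\rfloor$ whenever $\qqq \gtrsim (nr^d/\log n)^\xi$.

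For the $B_{uv}$-side I would repeat Case (ii) of the earlier proof verbatim: replace $G_n^{0,\qqq}|_{B_{uv}}$ by the dominating random graph $\tilde{G}_{uv}^{local}$ in which both sides $B_r^{\mathcal{X}_n}(u)$ and $B_r^{\mathcal{X}_n}(v)$ are made complete, $(u,v)$ is added, and crossing edges are inserted with probability $\qqq$. Since $\aF$ forces $|B_{uv}| \leq 2N_{\max}$, a counting argument identical to the one in Appendix \ref{appendix:detailscaseBinsertiononly_very_sparse} (with the growing $N_{\max}$ now tracked explicitly) produces the same form of the bound $\qqq \leq c^b_1 (\cdots)^{c^b_2/\aK}$ needed in part (a), and in part (b) the choice $\aK \sim \log_{1/\qqq} n$ will eventually exceed $2N_{\max}$, killing the $B_{uv}$ term trivially. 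Combining these two cases via (\ref{eqn:mainresult_very_sparse}), taking a union bound over all long edges $(u,v)$, and using $\myprob[\aF^c] = O(n^{-3})$ then yields both statements.

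The main obstacle is the growing $N_{\max}$: in Lemma \ref{lem:insertioncaseA_very_sparse} the factor $(k/N_{\max})$ appearing in the $\qqq$-constraint is a constant, while here both $k$ and $N_{\max}$ depend on $n$ at comparable rates, so one must track carefully that the ratio $k/N_{\max}$ (which controls how much of the multinomial mass is concentrated at balanced partitions $x_i \approx k/m$) stays bounded below by a positive constant. This, together with verifying that the three terms in the analogue of (\ref{eqn:qbound1_very_sparse}) simultaneously admit the unified upper bound $C_1(nr^d/\log n)^{C_2/\aK}$, is the delicate bookkeeping I would expect to spend the most effort on.
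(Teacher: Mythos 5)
Your proposal is correct and mirrors the paper's own proof: same decomposition into $\tilde{A}_{uv}$ and $B_{uv}$, same well-separated clique-partitions family, same choice $\aK \sim N_{\max}$ in part (a) with the key substitution $N_{\max} = 5\log n/\log(\log n/(\sigma\theta 2^d nr^d))$ converting $(1/n)^{c/\aK}$ into $(nr^d/\log n)^{c'}$, and the same trivial dispatch of the $B_{uv}$ term in part (b) once $\aK$ exceeds $2N_{\max}$. One small remark: the paper does not need a new analogue of Lemma \ref{lem:insertioncaseA_very_sparse} — that lemma's hypotheses never assumed $N_{\max}$ constant, so it applies verbatim with the growing $N_{\max}$ (the extra factor $\aK_n/N_{\max}$ in the resulting $\qqq$-constraint is then a constant for $\aK_n = 4N_{\max}$); only part (b) needs the separate Lemma \ref{lem:insertioncaseA_quite_sparse_looser}, the quite-sparse counterpart of Lemma \ref{lem:insertioncaseA_very_sparse_looser}.
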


The proof of Theorem \ref{thm:type_2_quite_sparse} follows the same flow as the proof of Theorem \ref{thm:type_2_very_sparse}, with some minor changes. The details can be found in Appendix \ref{appendix:thm:type_2_quite_sparse}. \\

\bigskip

\subsubsection{Putting everything together for Part (II) of Theorem \ref{thm:insertion_only_RGG}}~

To wrap up all the above results, we have the following corollary regarding the upper bound of $\omega(G_n^{0,\qqq})$ in subcritical regime.

\begin{cor}\label{cor:upper_bound_summary_quite_sparse}
Given a $(0,\qqq)$-perturbed noisy random geometric graph $G_n^{0,\qqq}$ in \mysetting{} and suppose that $n^{-\epsilon} \ll nr^d \ll \log{n}$ for all $\epsilon > 0$, then
\begin{itemize}
\item[(i)] there exist two constants $C_1, C_2 >0$ such that if $\qqq \leq C_1 \left( nr^d / \log n\right)^{C_2}$, then a.s.
\begin{align*}
\omega\left(G_n^{0,\qqq}\right) \lesssim \frac{\log{n}}{\log{\left(\log{n} / nr^d \right)}}
\end{align*}
\item[(ii)] and there exists a constant $\xi > 0$ such that if $\left(nr^d /\log n\right)^{\xi} \leq \qqq < 1$, then a.s.
\begin{align*}
\omega\left(G_n^{0,\qqq}\right) \lesssim \log_{1 /\qqq}{n}
\end{align*} 
\end{itemize}
\end{cor}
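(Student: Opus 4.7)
The plan is to derive this corollary by exactly mirroring the argument used for Corollary \ref{cor:upper_bound_summary_very_sparse}, simply substituting Lemma \ref{lem:num_points_3rball_quite_sparse} in place of Lemma \ref{lem:num_points_3rball_very_sparse} and Theorem \ref{thm:type_2_quite_sparse} in place of Theorem \ref{thm:type_2_very_sparse}. The dichotomy between Type-\rom{1} cliques (no long-edge, i.e., diameter $\leq 3r$) and Type-\rom{2} cliques (containing at least one long-edge) from Section \ref{sec:insertion_only_very_sparse} carries over without modification, and $\omega(G_n^{0,\qqq})$ is bounded by the maximum of the two types' clique sizes.

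For the Type-\rom{1} contribution, any such clique sits inside a $3r$-ball centered at any one of its vertices, so it has size at most $M_{W_3}$. Lemma \ref{lem:num_points_3rball_quite_sparse} gives $M_{W_3} \le 5\log n / \log\bigl(\log n / (\sigma \theta 6^d nr^d)\bigr)$ with probability $1-O(n^{-3})$; since $nr^d \ll \log n$ the multiplicative constant $\sigma\theta 6^d$ inside the denominator can be absorbed, yielding $M_{W_3} \lesssim \log n / \log(\log n/nr^d)$ a.s. This bound is independent of $\qqq$ and handles the Type-\rom{1} contribution in both parts (i) and (ii).

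For part (i), take the constants $C_1, C_2$ supplied by part (a) of Theorem \ref{thm:type_2_quite_sparse}. Under the hypothesis $\qqq \le C_1(nr^d/\log n)^{C_2}$, that theorem gives $\omega_{u,v}(G_n^{0,\qqq}) \lesssim \log n/\log(\log n/nr^d)$ w.h.p.\ simultaneously for every long-edge $(u,v)$, which bounds every Type-\rom{2} clique by the same quantity. Combining the two types yields the desired $\omega(G_n^{0,\qqq}) \lesssim \log n/\log(\log n/nr^d)$.

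For part (ii), choose $\xi$ as provided by part (b) of Theorem \ref{thm:type_2_quite_sparse}, so that every long-edge has edge clique number $\lesssim \log_{1/\qqq} n$ w.h.p. The assumption $\qqq \ge (nr^d/\log n)^\xi$ gives $\log(1/\qqq) \le \xi \log(\log n/nr^d)$, and hence
\[
\log_{1/\qqq} n \;=\; \frac{\log n}{\log(1/\qqq)} \;\ge\; \frac{1}{\xi}\cdot \frac{\log n}{\log(\log n/nr^d)},
\]
so the Type-\rom{2} bound $\log_{1/\qqq} n$ dominates the Type-\rom{1} bound $M_{W_3}$, and taking the max yields $\omega(G_n^{0,\qqq}) \lesssim \log_{1/\qqq} n$. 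There is no substantive obstacle here beyond the constant-bookkeeping needed to line up the hypothesis of the corollary with the hypotheses of Theorem \ref{thm:type_2_quite_sparse}, and the elementary comparison of the two rates in part (ii).
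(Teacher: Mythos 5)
Your proposal follows exactly the paper's route: combine the Type-\rom{1} bound from Lemma \ref{lem:num_points_3rball_quite_sparse} with the Type-\rom{2} edge-clique bound from Theorem \ref{thm:type_2_quite_sparse} (part (a) for (i), part (b) for (ii)), and in (ii) observe that the hypothesis $\qqq \ge (nr^d/\log n)^\xi$ makes $\log_{1/\qqq} n$ dominate the scan-statistic bound. This matches the paper's proof; your explicit comparison of the two rates in (ii) is a correct elaboration of a step the paper leaves implicit.
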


Part (i) can be derived by combining Lemma \ref{lem:num_points_3rball_quite_sparse} and part (a) of Theorem \ref{thm:type_2_quite_sparse}, while part (ii) can be derived by combining Lemma \ref{lem:num_points_3rball_quite_sparse} and part (b) of Theorem \ref{thm:type_2_quite_sparse}.

To derive a tight bound of $\omega(G_n^{0,\qqq})$, in addition to Corollary \ref{thm:q_insertion_lowerbound}, we also need the follwing lemma, which provides a lower bound of $\omega(G_n^{0,\qqq})$.
\begin{lemma}\label{thm:lowerbound_RGG_quite_sparse}
Given a $(0,\qqq)$-perturbed noisy random geometric graph $G_n^{0,\qqq}$ in \mysetting{} and suppose that $n^{-\epsilon} \ll nr^d \ll \log{n}$ for all $\epsilon > 0$, then a.s.
\begin{align*}
\omega\left(G_n^{0,\qqq}\right) \geq \frac{\log{n}}{2\log{\left(\log{n} /nr^d \right)}} 
\end{align*}
\end{lemma}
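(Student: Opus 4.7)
Since $G_n^{0,\qqq}$ is obtained from $G_n$ by inserting edges only, the clique number can only increase, so $\omega(G_n^{0,\qqq}) \geq \omega(G_n)$. Moreover, every set of vertices contained in a ball of radius $r/2$ is pairwise at distance at most $r$ and therefore spans a clique in $G_n$; taking $W_{1/2} = B_{1/2}(\origin)$ this gives $\omega(G_n) \geq M_{W_{1/2}}$ for the scan statistic of Definition \ref{def:generalised_scan_statistic}. The whole task therefore reduces to showing
\begin{align*}
M_{W_{1/2}} \;\geq\; \frac{\log n}{2\log(\log n / nr^d)} \qquad \text{a.s.}
\end{align*}

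The plan is a classical ``many disjoint balls'' first-moment argument. Since $\nu$ has bounded density $f$ with $\int f = 1$, there exists a small cube $Q \subset \R^d$ of positive Lebesgue measure on which $f \geq f_0$ for some constant $f_0 > 0$. For $n$ sufficiently large (so that $r$ is small), we can pack $N \geq c_Q\, r^{-d}$ pairwise disjoint closed balls $B_1,\ldots,B_N$ of radius $r/2$ inside $Q$, where $c_Q > 0$ depends only on $Q$. Let $Y_i := \mathcal{N}(B_i)$; then $Y_i \sim \mathrm{Bin}(n,\pi_i)$ with $\pi_i \geq f_0\,\theta\,(r/2)^d$, so $\mu_i := \myE[Y_i] \geq c_1\, nr^d$ for some constant $c_1 > 0$.

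Set $k_n := \ceil*{\log n / (2\log(\log n/nr^d))}$ and $s := \log(\log n/nr^d)$. Since $nr^d \ll \log n$, we have $s\to\infty$ and one checks that $k_n \geq \mu_i$ for all large $n$ (this is equivalent to $(nr^d/\log n)\log(\log n/nr^d) \to 0$, which holds because $u\log(1/u)\to 0$ as $u\to 0$). The lower tail bound of Lemma \ref{lem:binomial_upperbound} then yields
\begin{align*}
\myprob[Y_i \geq k_n] \;\geq\; \left(\frac{\mu_i}{e k_n}\right)^{k_n} \;=\; \exp\bigl(-k_n[\,s + O(\log s)\,]\bigr) \;=\; n^{-1/2 + o(1)},
\end{align*}
using $k_n \sim (\log n)/(2s)$ together with $\log k_n = \log\log n - \log(2s) + o(1)$. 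The regime $nr^d \ll \log n$ also forces $r^{-d} \gg n/\log n$, and hence
\begin{align*}
N \cdot \myprob[Y_1 \geq k_n] \;\gtrsim\; \frac{n^{1/2 + o(1)}}{\log n} \;\to\; \infty.
\end{align*}

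Finally, the counts $(Y_1,\ldots,Y_N)$ of disjoint bins of a multinomial distribution are negatively associated, so the indicators $\mathbf{1}[Y_i \geq k_n]$ are negatively associated as well; this gives
\begin{align*}
\myprob\bigl[\max_{1\leq i\leq N} Y_i < k_n\bigr] \;\leq\; \prod_{i=1}^N \myprob[Y_i < k_n] \;\leq\; \exp\!\left(-\sum_{i=1}^N \myprob[Y_i \geq k_n]\right) \;\to\; 0,
\end{align*}
so $M_{W_{1/2}} \geq k_n$ with high probability, which is the desired bound. The only delicate point is the asymptotic bookkeeping in the exponent of $(\mu_i / e k_n)^{k_n}$: one must verify that for $nr^d$ anywhere in the wide window $n^{-\epsilon} \ll nr^d \ll \log n$, the quantity $k_n \log(e k_n/\mu_i)$ is at most $(1/2 + o(1))\log n$ uniformly; everything else is routine.
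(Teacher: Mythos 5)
Your reduction $\omega(G_n^{0,q}) \geq \omega(G_n) \geq M_{W_{1/2}}$ is exactly what the paper uses; the paper then simply cites Lemma~3.9 of McDiarmid--M\"uller (with $\epsilon = 1/2$) to bound $M_{W_{1/2}}$ from below, whereas you reprove that bound from scratch with a disjoint-balls/first-moment/negative-association argument. Your route is self-contained and, modulo one issue, the asymptotics are right: the check $k_n \geq \mu_i$ does reduce to $u\log(1/u)\to 0$ with $u = nr^d/\log n$, the exponent computation $\log(\mu_i/(ek_n)) = -s + O(\log s)$ with $s = \log(\log n/nr^d)$ gives $\myprob[Y_i \geq k_n] \geq n^{-1/2+o(1)}$, and $N \gtrsim r^{-d} \gg n/\log n$ makes the expected number of ``full'' balls diverge; the negative-association step for multinomial bin counts is standard and is stated correctly (monotone functions of NA variables remain NA).

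The one genuine gap is the very first packing step: you claim that since $\nu$ has a bounded density with $\int f = 1$, there is a cube $Q$ on which $f \geq f_0 > 0$. That does not follow from boundedness and integrability alone --- take $f$ to be (a normalization of) the indicator of a fat Cantor set: $f$ has no positive lower bound on any cube, yet satisfies every hypothesis of \mysetting{}. This is precisely the difficulty that Lemma~\ref{lem:deletion_partition_lemma} (Penrose's Lemma~3.1) is designed to sidestep: it gives $N = \Omega(r^{-d})$ disjoint translates of $B_{r/2}(\origin)$ each with $\nu$-measure at least $(1-\rho)\sigma\theta (r/2)^d$, which is all you need to run your first-moment bound with $\mu_i \geq c_1 n r^d$. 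If you replace the cube argument by an invocation of Lemma~\ref{lem:deletion_partition_lemma}, the rest of your proof goes through unchanged, and you get a clean self-contained alternative to the paper's citation of McDiarmid--M\"uller.
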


\begin{proof}[Proof of Lemma \ref{thm:lowerbound_RGG_quite_sparse}]
Note that $\omega\left(G_n^{0,\qqq}\right) \geq M_{W_{1/2}}$ and $M_{W_{1/2}}$ can a.s.\ be bounded from below by $\log{n} / \left(2\log{\left(\log{n} /nr^d \right)}\right).$ (Pick $\epsilon = 1/2$ in Lemma 3.9 of \cite{mcdiarmid2011chromatic}.) 
\end{proof}

Finally, combining Lemma \ref{thm:lowerbound_RGG_quite_sparse} with part (i) of Corollary \ref{cor:upper_bound_summary_quite_sparse} concludes the proof of part (\rom{2}.a) of Theorem \ref{thm:insertion_only_RGG}. Note that there exists a constant $C_1'$ such that $\qqq \leq \left(nr^d /\log n\right)^{C_1'}$ implies $\qqq \leq C_1\left(nr^d / \log n\right)^{C_2}$ and if $\qqq \in \left(\left(nr^d /\log n\right)^{C_1'}, \left(nr^d /\log n\right)^{\xi}\right)$ (if this interval exists), then $$\log_{1/\qqq}n = \Theta\left(\frac{\log{n}}{\log{\left(\log{n} / nr^d \right)}} \right).$$ Thus we can extend the lower bound of the condition in part (ii) of Corollary \ref{cor:upper_bound_summary_quite_sparse} to $\left(nr^d /\log n\right)^{C_1'}$ by the same reasoning at the end of the proof for subcritical regime. Combining these facts with Corollary \ref{thm:q_insertion_lowerbound} and part (ii) of Corollary \ref{cor:upper_bound_summary_quite_sparse} concludes the proof of (\rom{2}.b) of Theorem \ref{thm:insertion_only_RGG}.

%%%%%%%%%%%%%%%%%%%%%%%%%%%%%%%%%%%%%%%%%%%%%%%%%%%%%%%%%%%%%%%%%%%%%%%%%%%%%%%%%%%%%%%

\subsection{Proof of Part (\rom{3}) --- ``supercritical'' regime}\label{sec:insertion_only_dense}~

In this section, we discuss the order of $\omega(G_n^{0,\qqq})$ in the regime $\sigma n r^d / \log n \rightarrow t \in (0, \infty)$. Again, we first derive an upper bound of $\omega(G_n^{0,\qqq})$ by considering two types of cliques (Type-\rom{1} and Type-\rom{2}) introduced in Section \ref{sec:insertion_only_very_sparse}. The idea of the proof in this section is very similar to the one in Section \ref{sec:insertion_only_quite_sparse}, thus the proofs are omitted.

Set $\tau$ be the smallest real number such that $\tau \geq 2$ and $\tau (\log \tau - 1) \geq 4 / (2^d\theta t)$. Since $d,t$ and $\theta$ are all given constants, $\tau$ is also a constant. 
%\yusu{Minghao: So the big $\Theta$ depends on $\frac{1}{t}$ -- is that standard? } 

\paragraph{\bf Type-\rom{1} cliques.~}
The following lemma gives upper bounds of the number of vertices in each $r-$ball and $3r-$ball respectively. Recall that $W_1 = B_{1}(\origin)$ and $W_3 = B_{3}(\origin)$.
%\yusu{Minghao, Lemma \ref{lem:num_points_1rball_dense} is not used since we omitted the proof. It is kind of silly to write this result out (which is also not very important). Why don't you just add the bound for the $r$-ball (from Lemma \ref{lem:num_points_1rball_dense}) also into the lemma below (Lemma \ref{lem:num_points_3rball_dense}). Then remove Lemma \ref{lem:num_points_1rball_dense}.}
\begin{lemma}\label{lem:num_points_3rball_dense}
If $\sigma n r^d / \log n \rightarrow t \in (0, \infty)$, then 
\begin{align*}
\myprob\left[M_{W_1} \leq \tau 2^d \theta \sigma nr^d\right] &= 1 + O(n^{-3})\\
\myprob\left[M_{W_3} \leq \tau 6^d \theta \sigma nr^d\right] &= 1 + O(n^{-3})
\end{align*}
\end{lemma}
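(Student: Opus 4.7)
The plan is to follow the same strategy used in Lemmas~\ref{lem:num_points_3rball_very_sparse} and~\ref{lem:num_points_3rball_quite_sparse}: reduce the supremum over all translates $x+rW_s$ to a finite union bound over the $n$ balls centered at sample points, then apply a concentration bound to each such ball. In the supercritical regime the mean is of order $\log n$ rather than a negative power of $n$, so the crude binomial bound of Lemma~\ref{lem:binomial_upperbound} is too lossy; I would instead apply the Chernoff--Hoeffding bound (Lemma~\ref{lem:chernoff_hoeffding}), which is the sharper tool available here.

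Concretely, for $s\in\{1,3\}$, observe that if $\mathcal{N}(B_{sr}(x))\ge k$ for some $x\in\R^d$, then this ball contains at least one sample point $X_i$, and hence is contained in $B_{2sr}(X_i)$. A union bound over $i$ gives
\begin{align*}
\myprob\bigl[M_{W_s}\ge k\bigr]\;\le\; n\cdot\myprob\bigl[\mathcal{N}(B_{2sr}(X_1))\ge k\bigr].
\end{align*}
Conditioning on $X_1$ and using that the density of $\nu$ is bounded above by $\sigma$, the remaining count is stochastically dominated by a binomial with parameters $(n,\sigma\theta(2sr)^d)$ and mean $\mu_s=(2s)^d\sigma\theta nr^d$. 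The thresholds in the lemma are precisely $\tau\mu_1=\tau 2^d\theta\sigma nr^d$ when $s=1$ and $\tau\mu_3=\tau 6^d\theta\sigma nr^d$ when $s=3$, so the problem reduces to bounding $\myprob[\mathrm{Bin}(n,p)\ge\tau\mu_s]$ in both cases.

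Applying Lemma~\ref{lem:chernoff_hoeffding} gives $\myprob[\mathrm{Bin}(n,p)\ge\tau\mu_s]\le\exp(-\mu_s H(\tau))$, and since $H(\tau)=1+\tau(\log\tau-1)\ge\tau(\log\tau-1)$, combining the defining inequality $\tau(\log\tau-1)\ge 4/(2^d\theta t)$ with the hypothesis $\sigma nr^d\sim t\log n$ yields, for $s=1$,
\begin{align*}
\mu_1 H(\tau)\;\ge\; 2^d\theta\sigma nr^d\cdot\tfrac{4}{2^d\theta t}\;=\;\tfrac{4\sigma nr^d}{t}\;\sim\;4\log n.
\end{align*}
Hence the single-ball probability is at most $n^{-4+o(1)}$, and the extra factor of $n$ from the union bound leaves $O(n^{-3})$, as claimed. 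The case $s=3$ is strictly stronger since $\mu_3/\mu_1=3^d$, giving an exponent of roughly $4\cdot 3^d$ before the union bound.

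The only delicate point is matching constants so that the Chernoff exponent survives both the union-bound factor of $n$ and the slack coming from $\sigma nr^d/\log n\to t$ (rather than equaling $t$ exactly); the constant $4$ inside the defining inequality for $\tau$ is engineered precisely to leave a final exponent of $-3$ after these two losses, modulo $o(1)$ corrections in the exponent that are absorbed in the $O(\cdot)$ notation. The requirement $\tau\ge 2$ merely ensures that the threshold exceeds the mean so that Lemma~\ref{lem:chernoff_hoeffding} applies.
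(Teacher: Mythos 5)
Your approach matches what the paper intends: the paper omits this proof explicitly, stating only that it is ``very similar'' to the nearly-critical case, which is the Chernoff--Hoeffding argument you reproduce. The reduction to $B_{2sr}(X_i)$, the union bound over $n$ sample points, the stochastic domination by $\mathrm{Bin}(n,\sigma\theta(2s)^d r^d)$, and the application of Lemma~\ref{lem:chernoff_hoeffding} with $k=\tau\mu_s$ are all correct, and you correctly identify how the defining inequality for $\tau$ is engineered to beat the union-bound factor of $n$.

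One small bookkeeping point: after you discard the $+1$ in $H(\tau)=1+\tau(\log\tau-1)$, the single-ball bound is $n^{-4+o(1)}$, so the union bound gives $n^{-3+o(1)}$, and $n^{-3+o(1)}$ is \emph{not} $O(n^{-3})$ in general (the $o(1)$ could tend to zero from above arbitrarily slowly). The slack is not automatically absorbed. The fix is exactly the term you threw away: keeping the $+1$ contributes an extra $\mu_s\cdot 1\sim (2s)^d\theta t\log n$ to the Chernoff exponent, so that $\mu_1 H(\tau)\gtrsim(4+2^d\theta t)\log n$, which comfortably survives both the union bound and the $(1+o(1))$ slack from $\sigma nr^d/\log n\to t$, giving a clean $O(n^{-3})$. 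With that adjustment the proof is complete.
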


\paragraph{\bf Type-\rom{2} cliques.~}
The proof of the following technical theorem is almost the same as the proof of part (a) of Theorem \ref{thm:type_2_quite_sparse} thus is omitted.
%By using a similar argument in Lemma \ref{lem:num_points_3rball_quite_sparse}, we can get the following lemma which gives an upper bound for the number of points in each $r-$ball.
%\begin{lemma}\label{lem:num_points_1rball_dense}
%If $\frac{\sigma n r^d}{\ln n} \rightarrow t \in (0, \infty)$, then 
%\begin{align*}
%\myprob\left[M_{W_1} \leq \tau 2^d \theta \sigma nr^d\right] = 1 + O(n^{-3})
%\end{align*}
%\end{lemma}

\begin{theorem}\label{thm:type_2_dense}
Given a $(0,\qqq)$-perturbed noisy random geometric graph $G_n^{0,\qqq}$ in \mysetting{} and suppose that $\sigma n r^d / \log n \rightarrow t \in (0, \infty)$, then there exists a constant $C$ which depends on the \Bconst{} $\myBC$ such that if  $\qqq \leq C$ then, with high probability, for all long-edge $(u,v)$ in $G_n^{0,\qqq}$, its edge clique number 
$\omega_{u,v}(G_n^{0,\qqq}) \lesssim nr^d$. 
\end{theorem}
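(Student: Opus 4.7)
The proof will mirror the two-case split used for Theorem~\ref{thm:type_2_very_sparse}(a) and Theorem~\ref{thm:type_2_quite_sparse}(a), with the threshold $N_{\max}$ and the target clique size adapted to the supercritical regime. Specifically, fix a long-edge $(u,v)$ and partition $\mathcal{X}_n = \tilde{A}_{uv} \cup B_{uv}$, where $B_{uv} = B_r^{\mathcal{X}_n}(u)\cup B_r^{\mathcal{X}_n}(v)$ and $\tilde{A}_{uv} = (\mathcal{X}_n \setminus B_{uv})\cup\{u,v\}$. Let $\aF$ denote the event ``every $r$-ball contains at most $N_{\max} := \tau 2^d \theta \sigma n r^d$ points,'' so that by Lemma~\ref{lem:num_points_3rball_dense} we have $\myprob[\aF^c] = O(n^{-3})$. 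By pigeonhole and the union bound, for any target threshold $\aK$,
\[
\myprob\left[\omega_{u,v}(G_n^{0,\qqq}) \ge \aK \,\middle|\, \aF\right] \le \myprob\left[\omega_{u,v}(G_n^{0,\qqq}|_{\tilde{A}_{uv}}) \ge \aK/2 \,\middle|\, \aF\right] + \myprob\left[\omega_{u,v}(G_n^{0,\qqq}|_{B_{uv}}) \ge \aK/2 \,\middle|\, \aF\right].
\]

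For the far part $\tilde{A}_{uv}$, I would invoke Theorem~\ref{lem:BCLdoubling} to obtain a \myWSP{} $\mathcal{P}=\{P_i\}_{i\in\Lambda}$ of $A_{uv}$ with $|\Lambda|\le\myBC^2$, and inside each $\tilde{P}_i = P_i\cup\{u,v\}$ control the number $\aI$ of cliques of size $k+2$ containing $(u,v)$ by Markov's inequality exactly as in Eqn.~(\ref{eqn:expectationX_very_sparse}). Since, under $\aF$, every cluster $C_j^{(i)}$ in the clique-partition of $P_i$ has cardinality at most $N_{\max}$, one obtains a bound on $\myE[\aI]$ of the same multinomial form. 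The goal is then to prove an analog of Lemma~\ref{lem:insertioncaseA_very_sparse}: namely, that there exist absolute constants $C', C'' > 0$ (depending only on $d$, $\sigma$, $t$, and $\myBC$) such that for $k = C' n r^d$ and any fixed $\qqq \le C''$, the sum in Eqn.~(\ref{eqn:expectationX_very_sparse}) is $O(n^{-3})$. The key heuristic is that a size-$(k+2)$ clique requires at least $\Omega(k^2)$ ``cross-cluster'' edges, each of which must be inserted (probability $\qqq$), so the total expectation scales like $\qqq^{\Omega(k^2)}$ times a polynomial in $n^{O(k)}$; taking $k \sim nr^d \sim \log n / \sigma$ makes $\qqq^{\Omega(k^2)}$ easily dominate, giving $O(n^{-3})$ whenever $\qqq$ is below some absolute constant $C$.

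The local part is handled trivially: under $\aF$, $|B_{uv}|\le 2 N_{\max} = 2\tau 2^d \theta \sigma nr^d$, so by choosing the final threshold $\aK = C_0 \cdot nr^d$ with $C_0 \ge 4\tau 2^d \theta \sigma$, we have $\aK/2 \ge |B_{uv}|$ and hence $\myprob[\omega_{u,v}(G_n^{0,\qqq}|_{B_{uv}}) \ge \aK/2 \mid \aF] = 0$. Combining this with the far-part estimate, and then taking a union bound over all $O(n^2)$ long-edges and over the complement of $\aF$, we conclude that with high probability every long-edge $(u,v)$ satisfies $\omega_{u,v}(G_n^{0,\qqq}) \lesssim nr^d$.

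The main obstacle is the expectation bound in the far part. In the sparser regimes this was straightforward because $N_{\max}$ was a constant (or at most $O(\log n /\log\log n)$), but here $N_{\max} = \Theta(\log n)$ grows, and simultaneously the target $k$ is also of order $\log n$, so the multinomial sum and the binomial coefficients $\binom{N_{\max}}{x_j}$ can be large. The delicate point is to choose $C_0$ (setting $k \sim C_0 n r^d$) large enough that the $\qqq^{(k^2-\sum x_j^2)/2}$ factor -- which is minimized when the $x_j$'s are as concentrated as possible -- still decays faster than the combinatorial growth, uniformly in the allocation $(x_1,\dots,x_m)$ with $m\le n$. This requires the same kind of three-part case analysis on $\sum x_j^2$ that appears in the proof of Lemma~\ref{lem:insertioncaseA_very_sparse}, but with the roles of ``small'' and ``large'' quantities rebalanced so that the constant $C$ bounding $\qqq$ comes out independent of $n$.
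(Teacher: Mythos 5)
Your overall structure mirrors the paper's intended argument (the paper notes the proof follows Theorem \ref{thm:type_2_quite_sparse}(a) with $N_{\max}$ replaced by $\tau 2^d\theta\sigma nr^d$), and the local-part observation is a clean shortcut --- though take $C_0$ strictly greater than $4\tau 2^d\theta\sigma$ so that $\aK/2 > |B_{uv}|$; with equality a single all-cross-edge clique of size $|B_{uv}|$ is still possible.

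The heuristic you give for the far-part estimate is, however, wrong in a way that matters. You assert a size-$(k+2)$ clique requires $\Omega(k^2)$ inserted cross-cluster edges, so the expectation scales like $\qqq^{\Omega(k^2)}\cdot n^{O(k)}$. But the dominant allocation in the multinomial sum is $(x_1,\dots,x_m) = (k,0,\dots,0)$ --- all $k$ non-$\{u,v\}$ vertices in a single $r/2$-cluster --- which produces \emph{no} inserted cross-cluster edges among those $k$ vertices: there $\qqq^{(k^2-\sum x_j^2)/2} = \qqq^0 = 1$. (You also have the direction reversed: this factor is \emph{maximal}, i.e.\ decays least, when the $x_j$'s are concentrated.) What kills this term is the prefactor $\qqq^{2k}$ in Eqn.~(\ref{eqn:expectationX_very_sparse}), coming from the $2k$ edges to $u$ and $v$, which are forced to be ER-insertions because $u,v\notin A_{uv}$ so no cluster point is within distance $r$ of either endpoint. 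This yields $\qqq^{2k}\binom{N_{\max}}{k}m \le \bigl(\qqq^2\cdot eN_{\max}/k\bigr)^k\cdot n$, and with $k \sim N_{\max}/|\Lambda| = \Theta(\log n)$ and $m\le n$ this is $O(n^{-3})$ once $\qqq$ is below a constant depending on $|\Lambda|\le\myBC^2$. Your sketch never invokes the $\qqq^{2k}$ prefactor, so if followed literally it would fail at precisely this allocation. Finally, you do not need to prove a new analogue of Lemma~\ref{lem:insertioncaseA_very_sparse}: in the supercritical regime $k$ and $N_{\max}$ are both $\Theta(\log n)$ and $m\le n$, so each of the three quantities in Eqn.~(\ref{eqn:qbound1_very_sparse}) reduces to a constant (for instance $(n^{-3}m^{-1})^{1/(2k)} = e^{-\Theta(1)}$ and $k/N_{\max} = \Theta(1)$). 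Hence Lemma~\ref{lem:insertioncaseA_very_sparse} applies as-is for $\aK\le 2N_{\max}$, and the extension to $\aK = 4N_{\max}$ is by monotonicity of the clique-number event, exactly as in the proof of Theorem \ref{thm:type_2_quite_sparse}(a).
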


To derive a tight bound of $\omega(G_n^{0,\qqq})$, in addition to Corollary \ref{thm:q_insertion_lowerbound}, we also need the following result on lower bound. 
\begin{lemma}\label{thm:lowerbound_RGG_dense}
Given a $(0,\qqq)$-perturbed noisy random geometric graph $G_n^{0,\qqq}$ in \mysetting{} and suppose that $\sigma n r^d / \log n \rightarrow t \in (0, \infty)$, then a.s.
\begin{align*}
\omega\left(G_n^{0,\qqq}\right) \geq \frac{1}{2} \eta \sigma nr^d
\end{align*}
where $\eta$ is the unique solution $x \geq  \theta\left(1/2\right)^d$ to $H\left( x / \left(\theta\left(1/2\right)^d\right)\right) = 1 / \left(\theta\left(1/2\right)^d t\right)$ (recall that function $H$ is defined as $H(a) = 1- a + a\log a$).
\end{lemma}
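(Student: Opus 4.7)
The plan is to reduce the claim to a scan-statistic lower bound on the underlying random geometric graph $G_n$, then prove that bound by a packing-plus-binomial-tail argument analogous to Lemma 3.9 of \cite{mcdiarmid2011chromatic} but tuned to the thermodynamic regime. Since a $(0,\qqq)$-perturbation only inserts edges, $\omega(G_n^{0,\qqq}) \geq \omega(G_n)$, and any set of points lying in a common ball of radius $r/2$ is pairwise within distance $r$ and hence forms a clique in $G_n$. So $\omega(G_n^{0,\qqq}) \geq M_{W_{1/2}}$, and it suffices to prove $M_{W_{1/2}} \geq \tfrac{1}{2}\eta \sigma n r^d$ a.s.

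By definition of the essential supremum $\sigma$, for any $\epsilon > 0$ there is a set $U \subset \mathbb{R}^d$ of positive Lebesgue measure on which $f \geq (1-\epsilon)\sigma$. For $n$ large (hence $r$ small), I would pack $U$ with $N = \Theta(1/r^d)$ pairwise disjoint closed balls $B_{r/2}(y_1),\ldots,B_{r/2}(y_N)$ whose centers are Lebesgue density points of $f$ inside $U$, so that each has $\nu$-measure at least $p := (1-2\epsilon)\sigma\theta(r/2)^d$. Then each $\mathcal{N}(B_{r/2}(y_j))$ stochastically dominates $\mathrm{Bin}(n,p)$, with mean at least $\mu := np = (1-2\epsilon)\theta(1/2)^d \sigma n r^d$, and, using $\sigma n r^d/\log n \to t$, $\mu \sim (1-2\epsilon)\theta(1/2)^d t \log n$. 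Set $k := \lceil \tfrac{1}{2}\eta \sigma n r^d \rceil$ and $a := k/\mu$. If $k \leq \mu$ (equivalently $a \leq 1$), then by Chebyshev, $\myprob[\mathcal{N}(B_{r/2}(y_j)) \geq k] \geq 1/2 - O(1/\mu) \geq 1/3$. Otherwise $1 < a \to \eta/\bigl(2(1-2\epsilon)\theta(1/2)^d\bigr)$, which is strictly less than $\eta/\theta(1/2)^d$ once $\epsilon < 1/4$. Using that $H$ is strictly increasing on $[1,\infty)$ together with the defining equation $H(\eta/\theta(1/2)^d) = 1/(\theta(1/2)^d t)$, this gives $H(a) < 1/(\theta(1/2)^d t)$, hence $\mu H(a) \leq (1-\delta)\log n$ for some fixed $\delta > 0$. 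A sharp lower bound on the binomial upper tail (for instance by applying Stirling to sharpen the one-term bound $(\mu/(ek))^k$ of Lemma \ref{lem:binomial_upperbound}, or via a local central limit theorem) then yields $\myprob[\mathcal{N}(B_{r/2}(y_j)) \geq k] \geq n^{-(1-\delta/2)}$.

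The counts $\mathcal{N}(B_{r/2}(y_j))$ are coordinates of a multinomial vector and hence negatively associated, so the probability that \emph{every} one of the $N$ balls has fewer than $k$ points is at most $(1 - n^{-(1-\delta/2)})^N \leq \exp(-N n^{-(1-\delta/2)})$. Because $N = \Theta(1/r^d) = \Theta(n/\log n)$ in this regime, this upper bound is $\exp(-\Omega(n^{\delta/2}/\log n)) \to 0$. Thus a.s.\ some ball contains at least $k$ points, giving $M_{W_{1/2}} \geq \tfrac{1}{2}\eta \sigma n r^d$ and completing the proof. The main technical point I expect is aligning the Chernoff-type binomial-tail exponent with the defining equation of $\eta$ --- the factor of $\tfrac{1}{2}$ in the target is precisely the slack used to convert the equality $H(\eta/\theta(1/2)^d) = 1/(\theta(1/2)^d t)$ into the strict inequality $H(a) < 1/(\theta(1/2)^d t)$ that drives the tail bound.
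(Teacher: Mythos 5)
Your reduction $\omega(G_n^{0,\qqq}) \geq \omega(G_n) \geq M_{W_{1/2}}$ is exactly the step the paper uses, but from there the two proofs diverge: the paper simply invokes Theorem~1.8 of \cite{mcdiarmid2011chromatic} (which gives the law of large numbers for the scan statistic $M_{W_{1/2}}$ in the supercritical regime, with limiting constant exactly $\eta$), whereas you reprove that scan-statistic lower bound from scratch. Your route --- pack $\Theta(r^{-d}) = \Theta(n/\log n)$ disjoint $r/2$-balls in a high-density region (this is essentially Lemma~\ref{lem:deletion_partition_lemma} scaled to radius $r/2$), dominate each count by a $\mathrm{Bin}(n,p)$ with mean $\mu = \Theta(\log n)$, and show $\myprob[\mathrm{Bin}(n,p) \geq k] \geq n^{-(1-\delta/2)}$ via a sharp tail lower bound, then finish with negative association of multinomial counts --- is correct in outline, and your observation that the defining equation for $\eta$ is precisely what aligns the Chernoff exponent $\mu H(k/\mu)$ with $\log n$ is the right heuristic. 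The one point that needs genuine care, which you flag yourself, is that the crude lower bound $(\mu/(ek))^k$ from Lemma~\ref{lem:binomial_upperbound} is too weak by a factor of $\exp(-\mu(2a-1))$ when $a = k/\mu > 1$; you need the Stirling-sharpened local estimate $\myprob[X = k] \asymp (2\pi k)^{-1/2} e^{-\mu H(a)}$ to get the exponent right. With that estimate in hand the argument closes. The tradeoff is clear: the paper's citation is a one-liner but depends on an external theorem whose proof is itself nontrivial, while your version is self-contained at the cost of redoing the packing, Poissonization-free tail lower bound, and negative-association step that McDiarmid--Müller package into their Theorem~1.8.
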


\begin{proof}[Proof of Lemma \ref{thm:lowerbound_RGG_dense}]
Note that $\omega\left(G_n^{0,\qqq}\right) \geq M_{W_{1/2}}$ and $M_{W_{1/2}}$ can be bounded from below by $\eta \sigma nr^d / 2$ almost surely (directly by Theorem 1.8 of \cite{mcdiarmid2011chromatic}). 
\end{proof}

Finally, combining Lemma \ref{thm:lowerbound_RGG_dense}, Lemma \ref{lem:num_points_3rball_dense} and Theorem \ref{thm:type_2_dense} concludes the proof.

%%%%%%%%%%%%%%%%%%%%%%%%%%%%%%%%%%%%%%%%%%%%%%%%%%%%%%%%%%%%%%%%%%%%%%%%%%%%%%%%%%%%%%%%%%%%%%%%%%%%%%%%%%%%%%

%%%%%%%%%%%%%%%%%%%%%%%%%%%%%%%%%%%%%%%%%%%%%%%%%%%%%%%%%%%%%%%%%%%%%%%%%%%%%%%%%%%%%%%

\section{Proof of Theorem \ref{thm:deletion_only_RGG}}\label{sec:proof_deletion_only_RGG}

In this section, we focus on deriving the order of $\omega\left(G_n^{\pp,0} \right)$. Note that $\omega\left(G_n^{\pp,0} \right) \leq M_{W_{1}}$. Thus, Theorem \ref{thm:deletion_only_RGG} part (\rom{1}) is obvious due to Lemma \ref{lem:upperbound_r_ball_very_sparse}. Our proof of the remaining parts of Theorem \ref{thm:deletion_only_RGG} uses the following lemma following easily from known results in the literature. 
%\yusu{Minghao: what is $\nu$ below? It could be the $\nu$ we use, but also any $\nu$. If you want to state the result only for our case with $\nu$ defined long time back, you need to remind readers what $\nu$ is.} 
%\minghao{fixed.}
Recall that $\nu$ is the probability distribution defined in Section \ref{subsec:definitions_notations}.
\begin{lemma}[Lemma 3.1 in \cite{penrose2003random}]\label{lem:deletion_partition_lemma}
For any fixed $\rho > 0$, recall that $W_1 = B_{1}(\origin)$ (and thus $rW_1 = B_{r}(\origin)$). 
There exists $N = \Omega(r^{-d})$ disjoint translates $x_1 + rW_1, \cdots, x_N + rW_1$ of $rW_1$ with $\nu(x_i + rW_1) \geq (1-\rho)\sigma \theta r^d$ for all $i=1,\cdots, N$.
\end{lemma}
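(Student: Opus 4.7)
The plan is to use a Lebesgue density argument. By the definition of the essential supremum $\sigma$ given in the paper, for any $\rho_1 \in (0, \rho)$ the set $S = \{y \in \mathbb{R}^d : f(y) > (1-\rho_1)\sigma\}$ has strictly positive Lebesgue measure. Intersecting with a sufficiently large ball if necessary, we may assume $S$ is bounded with $m(S) > 0$, where $m$ denotes Lebesgue measure.

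Next I would invoke the Lebesgue density theorem: almost every point of $S$ is a density point, so we can fix some $x^* \in S$ and a radius $R_0 > 0$ such that for every $R \leq R_0$,
\[
m\bigl(S \cap B_R(x^*)\bigr) \geq (1-\rho_2)\, m\bigl(B_R(x^*)\bigr) = (1-\rho_2)\,\theta R^d,
\]
where $\rho_2$ is a small constant to be chosen. Note that $R_0$ depends only on $\nu$ and on $\rho_1, \rho_2$, not on $n$ or $r$.

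The next step is a packing argument. Since $r(n) \to 0$, for all sufficiently large $n$ we have $r \leq R_0 / 4$, and a standard lattice packing allows us to place $N_0 = \Omega\bigl((R_0/r)^d\bigr) = \Omega(r^{-d})$ pairwise disjoint translates $x_1 + rW_1, \ldots, x_{N_0} + rW_1$ of $rW_1 = B_r(\origin)$, all contained in $B_{R_0}(x^*)$. Call a translate $B_r(x_i)$ \emph{good} if
\[
m\bigl(S \cap B_r(x_i)\bigr) \geq (1-\rho_3)\,\theta r^d
\]
for a constant $\rho_3$ to be chosen, and \emph{bad} otherwise. For each bad translate, the complement of $S$ contributes Lebesgue measure at least $\rho_3 \theta r^d$, and these contributions are disjoint. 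Hence the number of bad translates is at most
\[
\frac{m\bigl(B_{R_0}(x^*) \setminus S\bigr)}{\rho_3 \theta r^d} \leq \frac{\rho_2\, \theta R_0^d}{\rho_3\, \theta r^d} = \frac{\rho_2}{\rho_3}\Bigl(\frac{R_0}{r}\Bigr)^d.
\]
Choosing $\rho_2 \ll \rho_3$ (say $\rho_2 = \rho_3/(2 N_0 r^d / R_0^d)$ in terms of the packing constant), at least half of the translates are good, still giving $N = \Omega(r^{-d})$ good translates.

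Finally, for every good translate,
\[
\nu\bigl(B_r(x_i)\bigr) \geq \int_{S \cap B_r(x_i)} f\, dx \geq (1-\rho_1)\sigma \cdot (1-\rho_3)\,\theta r^d,
\]
so choosing $\rho_1, \rho_3$ small enough that $(1-\rho_1)(1-\rho_3) \geq 1-\rho$ completes the proof. The only mildly delicate step is balancing the three small parameters $\rho_1, \rho_2, \rho_3$ so that both the packing-size lower bound and the mass lower bound hold simultaneously; no part of the argument poses a serious obstacle, as it only combines the definition of essential supremum, the Lebesgue density theorem, and a volume/counting bound.
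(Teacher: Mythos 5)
The paper cites this as Lemma 3.1 of Penrose's monograph and does not reproduce a proof, so there is no in-paper argument to compare against; your proof follows the standard route (essential-supremum definition, Lebesgue density theorem at a density point of the high-density set, lattice packing inside a fixed small ball, and a volume-counting bound on the exceptional translates), which is essentially the argument in Penrose's book, and it is correct. Two minor presentational points: the expression $\rho_2 = \rho_3/(2 N_0 r^d / R_0^d)$ reads as circular since $N_0$ and $R_0$ themselves depend on $r$ and $\rho_2$, though your parenthetical about the packing constant makes clear you intend the non-circular choice $\rho_2 \le c\rho_3/2$ with $c$ the universal packing density; and since $\norm{\cdot}$ is an arbitrary norm, one should remark that the Lebesgue density theorem applies to $\norm{\cdot}$-balls, which it does (e.g.\ via the Besicovitch covering lemma the paper already invokes).
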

Our proof in this section follows an approach analogous to the proof of Theorem 1.8 in \cite{mcdiarmid2011chromatic}. We show the proof for the subcritical regime here in this section. Since we use similar techniques in the supercritical regime, the proof for that regime (part (\rom{3})) is relegated Appendix \ref{appendix:sec:deletion_only_dense}.

\subsection{Proof of Part (\rom{2}) --- subcritical regime}\label{sec:deletion_only_quite_sparse}
In this section, we discuss the order of $\omega(G_n^{\pp,0})$ in the regime $n^{-\epsilon} \ll nr^d \ll \log{n}$ for all $\epsilon > 0$.

\subsubsection{Deriving upper bound}

We first focus on the upper bound of $\omega\left(G_n^{\pp,0}\right)$. 
This is obtained via considering and relating to the random geometric graphs whose nodes are generated by Poisson point process. %\yusu{Minghao: I added the previous sentence.} 
%\minghao{A \emph{Poisson point process} on $\R^d$ with intensity function $g$ is a point process $\mathcal{P}$ such that for Borel set $A \subseteq \R^d$ the random variable $\mathcal{P}(A)$ is Poisson with parameter $\int_A g(x) dx$ whenever this integral is finite, and if $A_1, \cdots, A_k$ are disjoint Borel sets, then the variables $\mathcal{P}(A_i), 1\leq i \leq k$, are mutually independent.} \minghao{It's quite standard in the area of probability and stochastic process.} 
Let $N \sim Poisson\left((1+\delta)n\right)$ for some $\delta > 0$ (say $\delta = 1 /2$). %\yusu{Minghao: is $\delta$ a fixed constant independent of anything?} \minghao{Not depend on anything. Just some constant.}
%\yusu{Minghao, the next few sentences are not precise: (1) $G_N$ is not defined, so you cannot say that ``Note that $G_N$ is an ...". You can define $G_N$ to be that. (2) what is $f$ in the intensity function?  Please modify here. After you modify it here, you would also need to do it at the beginning for the "Proof of lower bound".} \minghao{fixed.}

Note that $G_N$ (random geometric graph on $N$ nodes; recall Definition \ref{def:RGG}) is a geometric graph ($r$-neighborhood graph) of the Poisson point process $\mathcal{P}_{(1+\delta)n}$ with intensity $(1+\delta)nf$ \cite{penrose2003random}, where $f$ is the density defined in Section \ref{subsec:definitions_notations}. Similar to $G_n^{\pp,0}$, we define a $(\pp,0)-$perturbation of $G_N$ as $G_N^{\pp,0}$. Set $k_n$ be an integer to be determined. Now, we have
\begin{align}
\myprob \left[ \omega\left(G_n^{\pp,0} \right) \geq k_n \right] &\leq \myprob \left[ \omega\left(G_N^{\pp,0} \right) \geq k_n \right] + \myprob \left[N \leq n-1 \right]\nonumber\\
&\leq \myprob \left[ \omega\left(G_N^{\pp,0} \right) \geq k_n \right] + e^{-\gamma n}\nonumber
\end{align}
%\yusu{Minghao: why is the first inequality true?} \minghao{Due to law of total probability.}
for some constant $\gamma > 0$ (depending on $\delta$) %\yusu{Minghao: I added the ``depending on $\delta$", correct?} 
by Lemma \ref{lem:chernoff_hoeffding}. For $y \in \mathbb{R}^d$ let $\mathbf{X}_y$ be the set of nodes of $G_N$ falling in $B_{r}(y)$. Let $M_y$ be the number of points falling in $B_{r}(y)$ spanning a maximum clique in $G_N^{\pp,0}\mid_{\mathbf{X}_y}$. Define $M := \max_{y \in \mathbb{R}^d} M_{y}$. Easy to see
\begin{align}\label{eqn:deletion_only_key_observation_dense}
\myprob \left[ \omega\left(G_N^{\pp,0} \right) \geq k_n \right] &= \myprob\left[M \geq k_n \right]
\end{align}
%\yusu{Why is the above not equality, but inequality?} \minghao{fixed.}
Fix $y \in \mathbb{R}^d$. By the property of Poisson point process, we know $|\mathbf{X}_y| \sim Poisson(\lambda)$ where $\lambda := (1+\delta)n\int_{B_{r}(y)}f(x) dx$.
By using Markov's inequality, we have
\begin{align*}
\myprob\left[M_y \geq k_n \right] &= \sum\limits_{i \geq k_n} \myprob\left[ M_{y} \geq k_n \bigg\vert |\mathbf{X}_y| = i \right] \myprob\left[ |\mathbf{X}_y| = i\right]\\
& = \sum\limits_{i \geq k_n} \myprob\left[\text{number of }k_n\text{-cliques in } G_N^{\pp,0}\mid_{\mathbf{X}_y} \geq 1\bigg\vert |\mathbf{X}_y| = i \right] \frac{e^{-\lambda} \lambda^i}{i!}\\
& \leq \sum\limits_{i \geq k_n} \myE\left[\text{number of }k_n\text{-cliques in } G_N^{\pp,0}\mid_{\mathbf{X}_y} \bigg\vert |\mathbf{X}_y| = i \right] \frac{e^{-\lambda} \lambda^i}{i!}\\
& \leq \sum\limits_{i \geq k_n} \binom{i}{k_n} (1-\pp)^{\binom{k_n}{2}} \frac{e^{-\lambda} \lambda^i}{i!}\\
& = \frac{\lambda^{k_n}}{k_n!}(1-\pp)^{\binom{k_n}{2}} \cdot e^{-\lambda} \sum\limits_{i \geq k_n} \frac{ \lambda^{i-k_n}}{(i-k_n)!}\\
& = \frac{\lambda^{k_n}}{k_n!}(1-\pp)^{\binom{k_n}{2}}
\end{align*}
%\yusu{Minghao: How did you get the last equality in the derivation above?} \minghao{fixed.}

Note that $\lambda \leq (1+\delta)\sigma \theta nr^d$. Thus,
\begin{align*}
\myprob\left[M_y \geq k_n \right] \leq \frac{\left((1+\delta)\sigma \theta nr^d\right)^{k_n}}{k_n!}(1-\pp)^{\binom{k_n}{2}}
\end{align*}
which does not depend on the choice of $y$. Combining this with (\ref{eqn:deletion_only_key_observation_dense}), we have
\begin{align*}
\myprob \left[ \omega\left(G_N^{\pp,0} \right) \geq k_n \right] &\leq \frac{\left((1+\delta)\sigma \theta nr^d\right)^{k_n}}{k_n!}(1-\pp)^{\binom{k_n}{2}}\\
& < \frac{1}{\sqrt{2\pi}} \left(\frac{(1+\delta)e\sigma\theta nr^d (1-\pp)^{(k_n-1)/2}}{k_n} \right)^{k_n}
\end{align*}
Finally, pick 
\begin{align*}
k_n = 2\log_{1 /(1-\pp)} \left(\frac{\log n}{\log \left(\log n /nr^d \right)}\right) + 1.
\end{align*}
Since $n^{-\epsilon} \ll nr^d \ll \log{n}$ for all $\epsilon > 0$, easy to see that $k_n \rightarrow \infty$. Note that 
\begin{align*}
\frac{(1+\delta)e\sigma\theta nr^d (1-\pp)^{(k_n-1)/2}}{k_n} = \frac{(1+\delta)e\sigma\theta }{k_n}\cdot \frac{\log \left(\log n / nr^d \right)}{\log n / nr^d} \leq \frac{C}{k_n}
\end{align*}
for some constant $C >0$. Thus, $\myprob \left[ \omega\left(G_N^{\pp,0} \right) \geq k_n \right] = o(1)$. Hence, we have that a.s.
\begin{align*}
\omega\left(G_n^{\pp,0}\right) \lesssim \log \frac{\log n}{\log \left(\log n / nr^d\right)}
\end{align*}

%\paragraph{Proof of lower bound.} 
\subsubsection{Deriving the lower bound.}

Now we consider the lower bound of $\omega\left(G_n^{\pp,0} \right)$. We first state the following well-known result on the clique number of \myER{} random graphs, which plays an important role in proving the lower bound.
\begin{lemma}\label{lem:ER_lowerbound_well-known}
Suppose $p \in (0,1)$ is a constant. For \myER{} random graph $G(n,p)$ with $n \rightarrow \infty$, we have
\begin{align*}
\myprob\left[\omega(G(n,p)) \leq \floor*{\log_{1 / p}{n}} \right] < e^{-n}
\end{align*}
\end{lemma}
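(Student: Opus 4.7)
The plan is to apply Janson's inequality to the count of $m$-cliques in $G(n,p)$ for $m := \floor{\log_{1/p} n}+1$, since $\omega(G(n,p)) \le \floor{\log_{1/p} n}$ holds if and only if $X_m = 0$, where $X_m$ denotes the number of $m$-cliques in $G(n,p)$.

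First I would estimate $\mu := \myE[X_m] = \binom{n}{m} p^{\binom{m}{2}}$. From $\log_{1/p} n \le m \le \log_{1/p} n + 1$ one gets $p^{m-1} \le 1/(pn)$, hence $p^{\binom{m}{2}} = (p^{m-1})^{m/2}$ is at least $(pn)^{-(m-1)/2}$ up to constant factors; combined with $\binom{n}{m} \ge (n/m)^m$ from Stirling, this yields
\begin{align*}
\mu \;\gtrsim\; \frac{n^{(m+1)/2}}{m^m \, p^{(m-1)/2}},
\end{align*}
which is $n^{\Theta(\log n)}$, i.e., super-polynomially large in $n$.

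Next I would invoke Janson's inequality in the form
\begin{align*}
\myprob[X_m = 0] \;\le\; \exp\!\left(-\frac{\mu^2}{2(\mu+\Delta)}\right),
\end{align*}
where $\Delta = \mu\cdot\sum_{i=2}^{m-1} g(i)$ with $g(i) := \binom{m}{i}\binom{n-m}{m-i}\, p^{\binom{m}{2}-\binom{i}{2}}$ collecting the joint-clique probabilities for pairs of $m$-subsets sharing exactly $i\ge 2$ vertices.

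The central step is then to show $\mu^2/(2(\mu+\Delta)) \ge n$, which reduces to $\sum_{i=2}^{m-1} g(i) = O(\mu/n)$. I would bound each $g(i)$ individually, using the identity $\binom{m}{2} - \binom{i}{2} = (m-i)(m+i-1)/2$ together with the standard inequalities $\binom{m}{i}\le (em/i)^i$, $\binom{n-m}{m-i}\le (en/(m-i))^{m-i}$, and $p^{(m-i)(m+i-1)/2} \le n^{-(m-i)(m+i-1)/(2m)} \cdot p^{O(1)}$. A direct estimation then shows $g(i)/(\mu/n)$ is at most some polynomial in $m$ divided by a positive power of $n$, uniformly in $i \in [2, m-1]$. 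Since there are only $O(\log n)$ values of $i$, we obtain $\sum_i g(i) = o(\mu/n)$, and Janson's inequality then yields $\myprob[X_m = 0] < e^{-n}$.

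The main obstacle is the uniform bound $g(i) = o(\mu/n)$ over the full range $i \in [2, m-1]$. While the endpoints $i=2$ and $i = m-1$ are relatively straightforward (the $i=2$ case being the tightest, with $g(2)/(\mu/n) \asymp m^4/(pn)$), the intermediate values require a careful balancing of the three factors defining $g(i)$, as neither the binomial coefficients nor the power of $p$ dominates cleanly across the whole range.
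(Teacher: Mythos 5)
Your approach --- Janson's inequality applied to the count of $m$-cliques with $m := \lfloor \log_{1/p} n\rfloor + 1$ --- is exactly the standard argument. Note that the paper does not give a proof of this lemma at all: it simply cites the $2\log_{1/p} n$ concentration result on p.~185 of Alon--Spencer, which itself rests on precisely the Janson-type computation you are doing. So you are reconstructing the cited argument rather than taking a different route, and your setup (the value of $m$, the reduction to $X_m = 0$, the form $\myprob[X_m = 0] \le \exp(-\mu^2/(2(\mu + \Delta)))$ with $\Delta = \mu \sum_{i \ge 2} g(i)$) is all correct.

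The ``main obstacle'' you flag --- proving $g(i) = o(\mu/n)$ uniformly over $i \in [2, m-1]$ --- is not actually a gap, and you should not worry about ``careful balancing'': because $m \approx \log_{1/p} n$ is only about \emph{half} the typical clique number, $\mu = n^{\Theta(\log n)}$, and the estimates have enormous slack. One clean way to finish: set $j := m-i$, use $\binom{m}{2}-\binom{i}{2} = j(2m-j-1)/2$ so that
\begin{align*}
\frac{g(i)}{\mu} \;=\; \frac{\binom{m}{j}\binom{n-m}{j}}{\binom{n}{m}}\, p^{-\binom{m-j}{2}},
\end{align*}
and bound $\binom{m}{j} = \binom{m}{m-j} \le \bigl(em/(m-j)\bigr)^{m-j}$, $\binom{n-m}{j}/\binom{n}{m} \le (m/n)^{m-j}(1+o(1))$, and $p^{-\binom{m-j}{2}} = \bigl(p^{-(m-j-1)}\bigr)^{(m-j)/2} \le \bigl(n p^{j}\bigr)^{(m-j)/2}$ using $p^{-(m-1)} \le n$. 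Collecting, $g(i)/\mu \lesssim \bigl(em^2 p^{j/2}/((m-j)\sqrt{n})\bigr)^{m-j}$. If $j \le m/2$ then $m-j \ge m/2$ and the base is $O(m/\sqrt{n}) = o(1)$, giving $n^{-\Theta(\log n)}$; if $j > m/2$ then $p^{j/2} < p^{m/4} \le n^{-1/4}$ (since $p^{m} \le 1/n$) makes the base $O(m^2/n^{3/4})$, and since $m-j \ge 2$ the term is $O(m^4/n^{3/2})$. So uniformly $g(i)/\mu = O(\log^4 n/n^{3/2})$, hence $\sum_{i=2}^{m-1} g(i)/\mu = O(\log^5 n/n^{3/2}) = o(1/n)$, which gives $\mu^2/(2(\mu+\Delta)) = \omega(n)$ and hence $\myprob[X_m=0] < e^{-n}$ for $n$ large. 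This is the same style of endpoint-vs-interior overlap estimate that the paper carries out explicitly (and more delicately, since there $p$ is not constant) in Appendix B.4 for Lemma~\ref{thm:ER_lowerbound}; here the slack from $m \approx \frac{1}{2}\cdot 2\log_{1/p} n$ makes the two-case split painless.
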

This is a direct corollary of the standard $2\log_{1/p}n$ statement (see P. 185 \cite{alon2016probabilistic}), thus we omit the proof. %\yusu{Minghao: What "the $2\log_{1/p}n$ statement"? Maybe just say before this lemma (instead of after the lemma) sth. like "The lemma below follows easily from classical results on Erdos-Renyi random graphs (e.g, P. 185 [1]). "} \minghao{I will show you the book tomorrow. Roughly speaking, the textbook focuses on deriving an exact value of the clique number, which is $2\log_{1/p}n$, but here I only need $\log_{1/p}n$. The bound here is not the same as the one in the book, but can be easily derived by using the method given in the book.}

Now let $N \sim Poisson\left((1-\delta')n\right)$ for some $\delta' \in (0,1)$ (say $\delta' = 1/2$). Note that $G_N$ is an $r-$neighborhood graph of the Poisson point process $\mathcal{P}_{(1-\delta')n}$ with intensity $(1-\delta')nf$, where $f$ is the density defined in Section \ref{subsec:definitions_notations}. %\yusu{Minghao: Here we have the same issue about $G_N$ and $f$ etc as in the case of upper bound. Modify as well.} \minghao{fixed.}
Similarly, we define a $(\pp,0)-$perturbation of $G_N$ as $G_N^{\pp,0}$. Set $k_n$ be an integer to be determined. Now, we have
\begin{align}
\myprob \left[ \omega\left(G_n^{\pp,0} \right) \leq k_n \right] &\leq \myprob \left[ \omega\left(G_N^{\pp,0} \right) \leq k_n \right] + \myprob \left[N \geq n + 1 \right]\nonumber\\
&\leq \myprob \left[ \omega\left(G_N^{\pp,0} \right) \leq k_n \right] + e^{-\gamma' n}\nonumber
\end{align}
for some constant $\gamma' > 0$ (depending on $\delta'$) by Lemma \ref{lem:chernoff_hoeffding}. Now fix some constant $\rho \in (0,1)$ (say $\rho = 1/2$). Recall $W_{1/2} = B_{1/2}(\origin)$. By Lemma \ref{lem:deletion_partition_lemma}, there exist points $x_1, x_2, \cdots, x_m$ with $m = \Omega\left(r^{-d}\right)$ such that the sets $x_i + W_{1/2}$ are disjoint and $$\nu \left(x_i + W_{1/2} \right) \geq \frac{(1-\rho) \sigma \theta}{2^d} r^d$$ for $i = 1, \cdots, m$ where $\nu$ is the probability distribution defined in Section \ref{subsec:definitions_notations}. 
%\yusu{I think most readers already forgot what $\nu$ is.. you need to remind them. Also, what is relation between $f$ which you used to generate your Poisson point process and this $\nu$?}
%\minghao{fixed.}
Let $\mathbf{X}_i$ be the set of points of $G_N$ falling in $x_i + W_{1/2}$. Then, we have
\begin{align}
\myprob \left[ \omega\left(G_N^{\pp,0} \right) \leq k_n \right] &\leq \myprob \left[ \omega\left(G_N^{\pp,0} \mid_{\mathbf{X}_1} \right) \leq k_n, \cdots, \omega\left(G_N^{\pp,0} \mid_{\mathbf{X}_m} \right) \leq k_n \right]\nonumber\\
& = \prod\limits_{i=1}^{m} \myprob \left[ \omega\left(G_N^{\pp,0} \mid_{\mathbf{X}_i} \right) \leq k_n\right]\label{eqn:deletion_only_large_product}
\end{align}
Note that all the points falling in any $r/2$-ball span a complete graph. Thus, for each $i$, we know the following holds.
\begin{align*}
\myprob \left[ \omega\left(G_N^{\pp,0} \mid_{\mathbf{X}_i} \right) \leq k_n\right] = \myprob \left[ \omega\left(G\left(|\mathbf{X}_i|, 1-\pp\right) \right) \leq k_n\right].
\end{align*}
Set 
\begin{align*}
\Phi_n := \frac{ \log n}{2\log \left(\log n /nr^d\right)}
\end{align*}
which goes to infinty as $n$ grows. Note that $|\mathbf{X}_i| \sim Poisson\left(\tilde{\lambda}\right)$ where 
\begin{align}\label{eqn:lambdabound} 
\frac{(1-\delta')\sigma \theta nr^d}{2^{d}} \geq \tilde{\lambda} := (1-\delta')n \cdot \nu(x_i + W_{1/2}) \geq \frac{(1-\rho)(1-\delta')\sigma \theta nr^d}{2^{d}}.
\end{align}
The upper bound follows from the upper bound of volume of balls. 
%\yusu{(1) Explain how the upper bound is obtained: I undertstand it is from max density and volume of balls. but still. (2) This $\lambda$ is different from the $\lambda$ used in the case for upper bound. It is a little confusing. How about use $\tilde{\lambda}$ here instead? } \minghao{(1) Note that $\sigma$ is the maximum density. So it is just the upper bound of the volume. (2) fixed.}

Now pick $$k_n := \floor*{\log_{1/(1-\pp)}\Phi_n} = \Omega\left( \log \frac{\log n}{\log \left(\log n /nr^d \right)}\right).$$ Let $Q \sim Poisson\left( \tilde{\lambda} / e\right)$. By the law of total probability, we have
\begin{align}
&\myprob \left[ \omega\left(G\left(|\mathbf{X}_i|, 1-\pp\right) \right) \leq k_n\right] \nonumber\\
\leq & \myprob\left[|\mathbf{X}_i| \leq \Phi_n \right] + \sum\limits_{j = \ceil*{\Phi_n}}^{\infty}\myprob \left[ \omega\left(G\left(j, 1-\pp\right) \right) \leq k_n\right] \myprob\left[|\mathbf{X}_i| = j\right]\nonumber\\
 \leq & 1 - \myprob\left[|\mathbf{X}_i| \geq \Phi_n + 1 \right] + \sum\limits_{j = \ceil*{\Phi_n}}^{\infty}\myprob \left[ \omega\left(G\left(j, 1-\pp\right) \right) \leq \floor*{\log_{\frac{1}{1-\pp}}j}\right] \frac{e^{-\tilde{\lambda}} \tilde{\lambda}^j}{j!}\nonumber\\
 < & 1 - \left(\frac{\tilde{\lambda}}{e (\Phi_n + 1)} \right)^{\Phi_n + 1} + \sum\limits_{j = \ceil*{\Phi_n}}^{\infty} e^{-j} \frac{e^{-\tilde{\lambda}} \tilde{\lambda}^j}{j!}\label{eqn:deletion_only_quite_sparse_lower_bound_law_of_total}\\
 = & 1 - e^{-(\Phi_n+1) \log \left(e(\Phi_n + 1) / \tilde{\lambda}  \right)} + e^{-\tilde{\lambda} + \frac{\tilde{\lambda}}{e}}\sum\limits_{j = \ceil*{\Phi_n}}^{\infty} \frac{ e^{-\tilde{\lambda} /e} \left(\tilde{\lambda} / e\right)^j}{j!} \nonumber\\
 \leq & 1 - e^{-(\Phi_n+1) \log \left(e(\Phi_n + 1) / \tilde{\lambda}  \right)} + e^{-\tilde{\lambda} + \frac{\tilde{\lambda}}{e}} \cdot \myprob[Q \geq \Phi_n] \nonumber\\
  < & 1 - e^{-(\Phi_n+1) \log \left(e(\Phi_n + 1) / \tilde{\lambda}\right)} + e^{-(1-1/e)\tilde{\lambda}} \cdot e^{-\Phi_n \log \left(\Phi_n /\tilde{\lambda}\right)} \label{eqn:deletion_only_quite_sparse_lower_bound_law_of_total_2}
\end{align}
where Eqn. (\ref{eqn:deletion_only_quite_sparse_lower_bound_law_of_total}) and (\ref{eqn:deletion_only_quite_sparse_lower_bound_law_of_total_2}) hold due to Lemma \ref{lem:binomial_upperbound} (note that $\Phi \gg \tilde{\lambda} / e$) and Lemma \ref{lem:ER_lowerbound_well-known}. Routine calculations show that for $n$ large enough, we have %\yusu{Minghao: I am not sure how to get the following two inequalities. }
\begin{align*}
(\Phi_n+1) \log \left(e(\Phi_n + 1) / \tilde{\lambda}\right) &\leq \frac{1}{2}\log n + 1\\
\Phi_n \log \left(\Phi_n / \tilde{\lambda}\right) &\geq \frac{1}{2}\log n - 1
\end{align*}
and $e^{-(1-1/e)\tilde{\lambda}} \leq 1 / (2e^2)$ since $nr^d \gg n^{-\epsilon}$ for all $\epsilon > 0$. Thus
\begin{align*}
\myprob \left[ \omega\left(G\left(\mathcal{N}\left(\mathbf{X}_i\right), 1-\pp\right) \right) \leq k_n\right] < 1 - \frac{1}{2e}n^{-1/2}.
\end{align*}
Plugging this back into Eqn. (\ref{eqn:deletion_only_large_product}), we have
\begin{align*}
\myprob \left[ \omega\left(G_N^{\pp,0} \right) \leq k_n \right] < \left(1 - \frac{1}{2e}n^{-1/2}\right)^m \leq e^{-\frac{1}{2e}n^{-1/2}m}
\end{align*}
Recall $m = \Omega(r^{-d})$ and $nr^d \ll \log n$, thus $n^{-1/2}m = \Omega\left(\sqrt{n} /\log n \right)$. This implies $\myprob \left[ \omega\left(G_N^{\pp,0} \right) \leq k_n \right] = o(1)$. 
Since we have that $\myprob \left[ \omega\left(G_n^{\pp,0} \right) \leq k_n \right] = o(1)$ with $$k_n = \Omega\left( \log \frac{\log n}{\log \left(\log n / nr^d\right)}\right),$$ we thus obtain the lower bound in Part (\rom{2}) of Theorem \ref{thm:deletion_only_RGG}.

%%%%%%%%%%%%%%%%%%%%%%%%%%%%%%%%%%%%%%%%%%%%%%%%%%%%%%%%%%%%%%%%%%%%%%%%%
\section{Combined case}\label{sec:combined_case}
In this section, we focus on bounding the clique number $\omega\left(G_n^{\pp,\qqq} \right)$ of $G_n^{\pp,\qqq}$, for different regimes of $nr^d$, $\pp$ and $\qqq$. Analogously to the monotonicity of the clique number of \myER{} random graphs \cite{janson2011random}, we have the following two monotone properties: for any positive integer $K$,
\begin{align*}
 \myprob\left[\omega\left(G_n^{0,\qqq}\right) \leq K\right]
 \leq \myprob\left[\omega\left(G_n^{\pp,\qqq}\right) \leq K\right] \leq
 \myprob\left[\omega\left(G_n^{\pp,0}\right) \leq K\right]\\
\myprob\left[\omega\left(G_n^{\pp,0}\right) \geq K\right] \leq \myprob\left[\omega\left(G_n^{\pp,\qqq}\right) \geq K\right] \leq \myprob\left[\omega\left(G_n^{0,\qqq}\right) \geq K\right]
\end{align*}
Combining these properties with Theorem \ref{thm:insertion_only_RGG}, Theorem \ref{thm:deletion_only_RGG} and technical lemmas (Lemma \ref{lem:ER_lowerbound_well-known} and Corollary \ref{thm:q_insertion_lowerbound}), we can derive some of the results showing below (part (\rom{1}) and part (\rom{3}.b)). 
%\yusu{Minghao: I don't think you need Corollary \ref{thm:q_insertion_lowerbound}. Don't you need Lemma \ref{lem:ER_lowerbound_well-known} combined with  that $\myprob[\omega(G_n^{q,\qqq}) \ge K] \ge \myprob[\omega(G(n,\qqq)) \ge K] $ for any $q$? At least for second bullet of part (I) I think that is what you need. Please double check.} \minghao{You can only get an upper bound for constant $p$ by applying Lemma \ref{lem:ER_lowerbound_well-known} (note the condition there), but here $p$ can be o(1). It's a little bit tricky I admit. }
Other results can be derived by carefully choosing $\aK_n$ in the proof of Theorem \ref{thm:type_2_quite_sparse} and Theorem \ref{thm:type_2_dense} to fit the corresponding lower bound for different regimes of $nr^d$. For example, we can set some $$\aK_n = \Theta\left(\log \left(\frac{\log n}{\log \left(\log n /nr^d \right)} \right)\right)$$ (the lower bound in the subcritical regime for deletion-only case; see Theorem \ref{thm:deletion_only_RGG}) in the proof of Theorem \ref{thm:type_2_quite_sparse} to derive part (\rom{2}.a) of Theorem \ref{thm:combined_RGG}. 
For these reasons,
we omit the proof of the following theorem.

\begin{theorem}\label{thm:combined_RGG}
Given a $(\pp, \qqq)$-perturbed noisy random geometric graph $G_n^{\pp,\qqq}$ in \mysetting{} with a fixed constant $0 < \pp < 1$, the following holds: 

\begin{enumerate}\denselist
\item[(\rom{1})] Suppose that $nr^d \leq n ^{-\alpha}$ for some fixed $\alpha \in \left( 0, 1 / \myBC^2\right]$. Then there exist constants $C_1,C_2$ such that
\begin{itemize}\denselist
\item[(\rom{1}.a)] if $\qqq \leq \left(1 / n\right)^{C_1}$, then a.s.
\begin{align*}
\omega\left(G_n^{\pp,\qqq}\right) \sim 1
\end{align*}
\item[(\rom{1}.b)] and if $\left(1 / n\right)^{C_1} < \qqq \leq C_2$, then a.s. 
\begin{align*}
\omega\left(G_n^{\pp,\qqq}\right) \sim \log_{ 1 / \qqq}{n} 
\end{align*} 
\end{itemize}

\item[(\rom{2})] Suppose that $n^{-\epsilon} \ll nr^d \ll \log{n}$ for all $\epsilon > 0$. Then there exist constants $C_1, C_2, C_3$ such that
\begin{itemize}\denselist
\item[(\rom{2}.a)] if $$\qqq \leq \left(1 / n\right)^{C_1 / \log \frac{\log{n}}{\log \left(\log{n} /nr^d\right)}},$$ then a.s.
\begin{align*}
\omega\left(G_n^{\pp,\qqq}\right) \sim \log\dfrac{\log{n}}{\log{\left(\log{n}/nr^d \right)}} 
\end{align*}
%\item $\exists$ a constant $0 < C_3 < 1$ such that if $\left(\frac{nr^d}{\ln{n}}\right)^{\xi} \ll q \leq C_3 < 1$ for all $\xi > 0$, then
\item[(\rom{2}.b)] and if $\left( nr^d / \log n\right)^{C_2} < \qqq \leq C_3$, then a.s. 
\begin{align*}
\omega\left(G_n^{\pp,\qqq}\right) \sim \log_{1 / \qqq}{n}
\end{align*} 

%\item otherwise
%\begin{align*}
%\omega\left(G_n^{\pp,\qqq}\right) & = O\left(\frac{\ln n}{\ln \frac{\ln n}{nr^d}}\right) ~~~~\text{a.s.}\\
%\omega\left(G_n^{\pp,\qqq}\right) & = \Omega\left( \ln\dfrac{\ln{n}}{\ln{\frac{\ln{n}}{nr^d}}} \right) ~~~~\text{a.s.}
%\end{align*} 
\end{itemize}

\item[(\rom{3})] There exists a constant $T > 0$ such that if $\sigma n r^d / \log n \rightarrow t  \in (T, \infty)$, then there exist constant $C_1,C_2$ such that
\begin{itemize}
    \item[(\rom{3}.a)] if $\qqq \leq \left(1 / n\right)^{C_1 / \log \log n} \left(\log \log n / \log n\right)$, then a.s.
    \begin{align*}
\omega\left(G_n^{\pp,\qqq}\right) \sim \log{\left(nr^d\right)} 
\end{align*}
    %\item if $\qqq > \left(\frac{1}{n}\right)^{\frac{C_1}{\ln \ln n}} \frac{\ln \ln n}{\ln n}$ and $\qqq = o(1)$, then
    %\begin{align*}
%\omega\left(G_n^{\pp,\qqq}\right) & = O\left(nr^d \right)  ~~~~\text{a.s.}\\
%\omega\left(G_n^{\pp,\qqq}\right) & = \Omega\left( \ln{\left(nr^d\right)} \right)  ~~~~\text{a.s.}
%\end{align*}
    
\item[(\rom{3}.b)] and if $0 < \qqq \leq C_2$ and $\qqq = \Theta(1)$, then a.s.
\begin{align*}
\omega\left(G_n^{\pp,\qqq}\right) \sim \log_{1 / \qqq}{n} 
\end{align*}
\end{itemize}

\end{enumerate}
\end{theorem}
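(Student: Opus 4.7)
The plan is to combine the monotone properties of clique number with Theorems \ref{thm:insertion_only_RGG} and \ref{thm:deletion_only_RGG} in those cases where the already-established bounds are matching and sharp, and to redo the first-moment arguments of Theorem \ref{thm:type_2_quite_sparse} and Theorem \ref{thm:type_2_dense} with a smaller target threshold $\aK_n$ in the remaining cases. Throughout, the upper bound uses $\omega(G_n^{\pp,\qqq}) \leq \omega(G_n^{0,\qqq})$; for the lower bound we use either $\omega(G_n^{\pp,\qqq}) \geq \omega(G_n^{\pp,0})$ (the geometric-plus-deletion contribution) or a stochastic domination by the ER graph $G(n,\qqq)$ (valid whenever $1-\pp \geq \qqq$), so that Corollary \ref{thm:q_insertion_lowerbound} applies.

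Several of the cases are then immediate. Parts (I.a), (I.b), (II.b) and (III.b) all have target equal to either $1$ or $\log_{1/\qqq}n$ (in (III.b), since $nr^d \sim \log n$ and $\qqq = \Theta(1)$, we have $nr^d \sim \log n \sim \log_{1/\qqq}n$, so this case falls into the same pattern). In every such case the upper bound is inherited verbatim from the corresponding part of Theorem \ref{thm:insertion_only_RGG}, while the lower bound is either trivial or follows from Corollary \ref{thm:q_insertion_lowerbound}. This leaves (II.a) and (III.a) as the genuinely new cases: in each, the lower bound comes from $\omega(G_n^{\pp,\qqq}) \geq \omega(G_n^{\pp,0})$ together with Parts (II) and (III) of Theorem \ref{thm:deletion_only_RGG}, but the insertion-only upper bound of Theorem \ref{thm:insertion_only_RGG} is strictly weaker than the target.

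For the upper bound in (II.a) and (III.a), the plan is to mimic Section \ref{sec:insertion_only_quite_sparse}. One again splits cliques into Type-I (contained in a $3r$-ball) and Type-II (containing a long edge), but now chooses $\aK_n$ to match the deletion-only target, namely $\aK_n = \Theta\!\left(\log \frac{\log n}{\log(\log n / nr^d)}\right)$ in case (II.a) and $\aK_n = \Theta(\log(nr^d))$ in case (III.a). The Type-I part is handled by first capping the number of points in any $3r$-ball by $N_{\max}$ via Lemma \ref{lem:num_points_3rball_quite_sparse} or Lemma \ref{lem:num_points_3rball_dense}; since every pair of such points is an edge of $G_n^{\pp,\qqq}$ with probability at most $\max(1-\pp,\qqq) = 1-\pp$ under the prescribed constraints on $\qqq$, the expected number of $\aK_n$-cliques among them is at most $\binom{N_{\max}}{\aK_n}(1-\pp)^{\binom{\aK_n}{2}}$, which is $o(n^{-3})$ for the chosen $\aK_n$. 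The Type-II part uses the well-separated clique-partitions family of Theorem \ref{lem:BCLdoubling} exactly as in the insertion-only proof, except that in the analog of Eqn.~(\ref{eqn:expectationX_very_sparse}) every within-cluster sub-clique of size $x_i$ now carries an extra factor $(1-\pp)^{\binom{x_i}{2}}$ from its surviving geometric edges.

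The main obstacle will be verifying that, with this smaller $\aK_n$ and the new $(1-\pp)^{\binom{\cdot}{2}}$ factors, the expected number of $\aK_n$-cliques containing a given long edge is still $o(n^{-3})$, so that a union bound over long edges closes the argument. The specific upper bounds on $\qqq$ prescribed in (II.a) and (III.a) are designed precisely to force the dominant term in the sum to be the one in which all $\aK_n$ vertices sit inside a single $r/2$-cluster (so that no expensive ER insertion is needed); that term is of order $N_{\max}^{\aK_n}(1-\pp)^{\binom{\aK_n}{2}}\cdot n^{O(1)}$, which becomes $o(n^{-3})$ once the hidden constant in $\aK_n$ is taken sufficiently large relative to $1/\log(1/(1-\pp))$. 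The remaining bookkeeping is analogous to Section \ref{sec:insertion_only_quite_sparse}.
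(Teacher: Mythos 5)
Your overall plan — monotonicity for the easy cases, then re-running the first-moment argument with a smaller threshold $\aK_n$ for (\rom{2}.a) and (\rom{3}.a) — is exactly the paper's strategy, and your identification of which parts follow immediately from Theorems \ref{thm:insertion_only_RGG} and \ref{thm:deletion_only_RGG} together with the monotone properties and the coupling $\omega(G_n^{\pp,\qqq}) \geq_{\mathrm{st}} \omega(G(n,\qqq))$ (when $\qqq \leq 1-\pp$) is sound.

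However, the Type-\rom{1} part of your argument for (\rom{2}.a) and (\rom{3}.a) has a genuine gap. You propose to cap $M_{W_3}$ by $N_{\max}$ via Lemma \ref{lem:num_points_3rball_quite_sparse} (resp.\ \ref{lem:num_points_3rball_dense}) and then bound the per-ball expected clique count by $\binom{N_{\max}}{\aK_n}(1-\pp)^{\binom{\aK_n}{2}}$, asserting this is $o(n^{-3})$. It is not: with $\aK_n = \Theta(\log N_{\max})$ one has $\binom{N_{\max}}{\aK_n} = e^{\Theta((\log N_{\max})^2)}$ and $(1-\pp)^{\binom{\aK_n}{2}} = e^{-\Theta((\log N_{\max})^2)}$, so the product is $e^{\pm \Theta((\log N_{\max})^2)}$; since $(\log N_{\max})^2 = O((\log\log n)^2) \ll \log n$ in both regimes, no choice of the hidden constant in $\aK_n$ makes it smaller than $n^{-1}$, let alone $n^{-3}$, and the union bound over $n$ centers does not close. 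The source of the loss is that $N_{\max}$ is the \emph{maximum} occupancy of a $3r$-ball, whereas the correct first-moment calculation (as in the paper's deletion-only upper bound in Section \ref{sec:proof_deletion_only_RGG}) works with the \emph{expected} occupancy $\lambda = \Theta(nr^d) \ll N_{\max}$: one bounds the expected number of $\aK_n$-subsets of $\mathcal{X}_n$ pairwise within $6r$ directly, getting a factor $(nr^d)^{\aK_n}/\aK_n!$ instead of $\binom{N_{\max}}{\aK_n}$, and \emph{that} decays fast enough. Replicating that Poisson/Binomial computation for $3r$-balls (bounding each pair's edge probability by $\max(1-\pp, \qqq) = 1-\pp$, which is fine) repairs Type-\rom{1}; the capping-by-$N_{\max}$ shortcut does not.

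A secondary but related issue: in your Type-\rom{2} narrative you credit the decay of the dominant ``all in one $r/2$-cluster'' term to $(1-\pp)^{\binom{\aK_n}{2}}$. By the same calculation as above, that factor alone cannot beat $n^{O(1)}$. What actually kills the term is the $\qqq^{2\aK_n}$ factor from the $2\aK_n$ forced insertions to the long-edge endpoints $u$ and $v$; the constraint $\qqq \leq (1/n)^{C_1/\log(\cdot)}$ in (\rom{2}.a) and $\qqq \leq (1/n)^{C_1/\log\log n}(\log\log n/\log n)$ in (\rom{3}.a) is calibrated precisely so that $\qqq^{2\aK_n} = n^{-\Omega(1)}$. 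Including the extra $(1-\pp)^{\binom{x_i}{2}}$ factors is harmless but not what makes the estimate work, and indeed the paper's own route for Type-\rom{2} simply reuses the insertion-only Theorems \ref{thm:type_2_quite_sparse} and \ref{thm:type_2_dense} verbatim via $\omega(G_n^{\pp,\qqq}) \leq \omega(G_n^{0,\qqq})$ with the smaller $\aK_n$, without any $(1-\pp)$-bookkeeping.
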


%%%%%%%%%%%%%%%%%%%%%%%%%%%%%%%%%%%%%%%%%%%%%%%%%%%%%%%%%%%%%%%%%%%%%%%%%%%
\section{Concluding remarks}

%This work can also be easily extended beyond Euclidean space, say a geodesic space equipped with doubling measure \cite{kahle2018local}. The different behaviors of edge clique numbers (local cliques) are discussed there.

In this paper, we study the behavior of the clique number of noisy random geometric graphs $G_n^{\pp, \qqq}$. In particular, we give the asymptotic tight bounds for the insertion-only case $G_n^{0, \qqq}$ and the deletion-only case $G_n^{\pp, 0}$ under different assumptions on $nr^d$ (Theorem \ref{thm:insertion_only_RGG} and Theorem \ref{thm:deletion_only_RGG}, respectively). To obtain these results, we deploy a range of classical and new techniques: For example, we develop a novel approach based on what we call the "well-separated clique-partitions family" to handle the insertion case. 
Some partial results for the general case  $\omega\left(G_n^{\pp, \qqq}\right)$ are also provided (Theorem \ref{thm:combined_RGG}). 
We also note that results in our paper can be extended beyond the Euclidean setting: For example, in \cite{kahle2018local}, noisy random geometric graphs generated from points sampled from a well-behaved doubling measure supported on a geodesic space are considered, and behaviors of the edge clique number are investigated. 

This work represents a first step towards characterizing properties of the noisy random geometric graphs (which intuitively are generated based on two types of random processes). There are many interesting open problems. For example, the combined case is not yet completely resolved (there are still gaps in the regimes). Also currently we only provide asymptotic tight bounds, and it would be interesting to identify the exact constant for the high order terms too. 
It will also be interesting to study other quantities beyond the clique number. 

Finally, we note that the random deletions/insertions can be viewed as ``noise'' on top of a base graph (which is a random geometric graph in our work). It will be interesting to see whether studies of clique numbers of other quantities can be used to ``denoise'' the input graph in practical applications (e.g, as in  \cite{ParthasarathyST17}). 
Indeed, the work of \cite{ParthasarathyST17} showed that the shortest path metric of the random geometric graph can be recovered (with approximation guarantees) from its ER-perturbed version, if the insertion probability $\qqq$ is small (compared to expected degree). In particular, in the high level, the work of \cite{ParthasarathyST17} uses a quantity, called the Jaccard index, to identify what they refer to as ``long-edges", removes such edges and use the shortest distances in the denoised graph to approximate those of the underlying random geometric graphs with approximation guarantees. We believe that the edge clique number that we study in this paper can be used as a more powerful way to identify such ``long-edges" that can tolerate a much larger range of insertion probability $\qqq$ than the previous work in Jaccard index. We will leave this as an interesting direction to explore in the future.

\bibliography{clique_number_RGG_under_ER_perturbation}

\appendix

\section{The missing proofs in Section \ref{sec:proof_insertion_only_RGG}}
\label{appendix:thm:insertiononly_very_sparse}

\subsection{The proof of Lemma \ref{lem:insertioncaseA_very_sparse}}
\label{appendix:lem:insertioncaseA_very_sparse}
%\begin{proof}
Since $1 \leq k \leq N_{\max}$, we know that $\qqq \leq 1$ thus is well-defined as a probability. To estimate the summation on the right hand side of Eqn. (\ref{eqn:expectationX_very_sparse}), we consider the quantity $x_{\max} := \max_{i}\{x_i\}$. We first enumerate all the possible cases of $(x_1, x_2, \cdots, x_m)$ when $x_{\max}$ is fixed, and then vary the value of $x_{\max}$.  

Set $h(y) = \max_{x_{\max}=y}\left\{\sum_{i=1}^{m}x_i^2\right\}$ for $y \geq \ceil*{k /m}$. It is the maximum value of $\sum_{i=1}^{m}x_i^2$ under the constraint $x_{\max}=y$. Without loss of generality, we assume $x_1=y$ and $y \geq x_2 \geq x_3 \geq \cdots \geq x_m \geq 0$. We argue that $\argmax_{x_{\max}=y}\left\{\sum_{i=1}^{m}x_i^2\right\} = \{y, y, \cdots, y, k-ry, 0, \cdots, 0\}$, that is $x_1 = x_2 = \cdots = x_r = y, x_{r+1}=k-ry$ where $r = \floor*{k /y}$. 

To show this, we first consider $x_2$: if $x_2 = y$, then consider $x_3$; otherwise, $x_2 < y$, then we search for the largest index $j$ such that $x_j >0$. Note the fact that if $x \geq y>0$, then $(x+1)^2+(y-1)^2 = x^2 + y^2 + 2(x-y) + 2>x^2+y^2$. So if we increase $x_2$ by $1$ and decrease $x_j$ by $1$, we will enlarge $\sum_{i=1}^{m}x_i^2$. After we update $x_2 = x_2 + 1$, $x_j = x_j - 1$, we still get a decreasing sequence $x_1\geq x_2 \geq \cdots \geq x_m \geq 0$. If we still have $x_2 < y$, 
then we repeat the same procedure above (by increasing $x_2$ and decreasing $x_j$ where $j$ is the largest index such that $x_j>0$). 
We repeat this process until $x_2 = y$ or $x_1 + x_2 = k$. If it is the former case (i.e, $x_2=y$), then we consider $x_3$ and so on. Finally, we will get the sequence $x_1 = x_2 = \cdots = x_r = y, x_{r+1}=k-ry$ where $r = \floor*{k /y}$ as claimed, and this setting maximizes $\sum_{i=1}^{m}x_i^2$. 

Next we claim that $h(y+1) > h(y)$. The reason is similar to the above. We update the sequence $x_1 = x_2 = \cdots = x_r = y, x_{r+1}=k-ry$ (which corresponding to $h(y)$) from $x_1$: we increase $x_1$ by $1$; search the largest index $s$ such that $x_{s} > 0$ and decrease $x_{s}$ by $1$. And then consider $x_2$ and so on and so forth. This process won't stop until $x_1 = x_2 = \cdots = x_s =y+1$ and $x_{s+1} = k - s(y+1)$ with $s = \floor*{k / y+1}$. Thus $h(y+1)>h(y)$. 

By enumerating all the possible values of $x_{\max}$, we split Eqn. (\ref{eqn:expectationX_very_sparse}) into three parts as follows (corresponding to the cases when $x_{\max} = k, x_{\max} \in \left[ \ceil*{ (k+1) / 2}, k-1\right] \text{~and~}x_{\max} \in \left[\ceil*{k / m}, \ceil*{(k+1)/2}-1\right]$) (see the remarks after this equation for how the inequality is derived); 
\begin{align}\label{eqn:mainestimate_very_sparse}
&\qqq^{2k}\sum\limits_{\substack{x_1 + x_2 + \cdots + x_m = k \\x_i \geq 0}} \binom{N_{\max}}{x_1} \binom{N_{\max}}{x_2}\cdots \binom{N_{\max}}{x_m}\qqq^{(k^2-\sum_{i=1}^{m}x_i^2)/2} \nonumber\\
\leq &~ \qqq^{2k}\binom{N_{\max}}{k}m + \qqq^{2k}\sum\limits_{x_{\max} =\ceil*{\frac{k+1}{2}}}^{k-1}\Bigg(\binom{m}{1} \binom{N_{\max}}{x_{\max}} \sum\limits_{\substack{\sum_{i=1}^{m-1}y_i  = k - x_{\max}\\ 0 \leq y_i \leq x_{\max}}}\binom{N_{\max}}{y_1} \cdots \\
&~~~~ \binom{N_{\max}}{y_{m-1}}\qqq^{x_{\max}(k-x_{\max})} \Bigg) + \binom{mN_{\max}}{k}\qqq^{\frac{(k-1)^2}{4} + 2k}\nonumber
\end{align}

The first term on the right hand side of Eqn. (\ref{eqn:mainestimate_very_sparse}) comes from the fact that if $x_{\max} = k$, then all the possible cases for $(x_1, x_2, \cdots, x_m)$ are $(k,0,0,\cdots,0), \cdots, (0,\cdots,0,k)$, and there are $m$ cases all together. 
For each case, the value of each term in the summation is $\binom{N_{\max}}{k}$, giving rise to the first term in Eqn. (\ref{eqn:mainestimate_very_sparse}).

The third term on the right hand side of Eqn. (\ref{eqn:mainestimate_very_sparse}) can be derived as follows. First, observe that 
\begin{align*}
&\sum\limits_{x_{\max} = \ceil*{\frac{k}{m}}}^{\ceil*{\frac{k+1}{2}} - 1}\Bigg(\sum\limits_{\substack{x_1 + x_2 + \cdots + x_m = k \\x_i \geq 0, \max_{i}\{x_i\} = x_{\max}}} \binom{N_{\max}}{x_1} \binom{N_{\max}}{x_2}\cdots \binom{N_{\max}}{x_m}\Bigg)\\
\leq &\sum\limits_{\substack{x_1 + x_2 + \cdots + x_m = k \\x_i \geq 0}} \binom{N_{\max}}{x_1} \binom{N_{\max}}{x_2}\cdots \binom{N_{\max}}{x_m} = \binom{mN_{\max}}{k} . 
\end{align*}
On the other hand, as $x_{\max} \leq \ceil*{(k+1)/2} - 1 = \ceil*{(k-1)/2}$, we have:
\begin{align*}
\frac{k^2-\sum_{i=1}^{m}x_i^2}{2} \geq \frac{k^2 -h(x_{\max})}{2} \geq \frac{k^2 -h(\ceil*{\frac{k-1}{2}})}{2} \geq \frac{(k-1)^2}{4}, 
\end{align*}
where the second inequality uses the fact that $h(y)$ is an increasing function, and the last inequality comes from that $h(\ceil*{(k-1)/2}) \le (\ceil*{(k-1)/2})^2 + (\ceil*{(k-1)/2})^2 + 1 \le k^2/4 + k^2/4 + 1 = k^2/2 + 1$. 
\vspace{5mm}

In what remains, it suffices to estimate all the three terms on the right hand side of Eqn. (\ref{eqn:mainestimate_very_sparse}). We will repeatedly use the well-known combinatorial inequality $\binom{n}{k} < \left(en / k\right)^k$.

\myparagraph{\bf The first term of Eqn. (\ref{eqn:mainestimate_very_sparse}):~} According to the assumptions in Eqn. (\ref{eqn:qbound1_very_sparse}), we know $$\qqq \leq \frac{1}{\sqrt{e}} \left(\frac{1}{n^3m}\right)^{\frac{1}{2k}} \left( \frac{k}{N_{\max}}\right)^{\frac{1}{2}}.$$ Thus, for the first term of Eqn. (\ref{eqn:mainestimate_very_sparse}), we have:
\begin{align}\label{eqn:firsttermestimate_very_sparse}
\qqq^{2k}\binom{N_{\max}}{k}m ~~<~~ \left(\frac{1}{e^k} \left(\frac{1}{n^3m}\right) \left( \frac{k}{N_{\max}}\right)^{k}\right) \left(\dfrac{eN_{\max}}{k}\right)^{k}m ~~=~~ \frac{1}{n^{3}}
\end{align}

\paragraph{\bf The second term of Eqn. (\ref{eqn:mainestimate_very_sparse}):~}For the second term of Eqn. (\ref{eqn:mainestimate_very_sparse}), we relax the constraint $x_{\max} \geq y_i \geq 0$ to $y_i \geq 0$. Thus, we have:
\begin{align}\label{eqn:boxandballtrick_very_sparse}
\sum\limits_{\substack{y_1+\cdots+y_{m-1} = k - x_{\max}\\x_{\max} ~~\geq~~ y_i \geq 0}}\binom{N_{\max}}{y_1}\cdots &\binom{N_{\max}}{y_{m-1}} \leq \sum\limits_{\substack{y_1+\cdots+y_{m-1} = k - x_{\max}\\y_i \geq 0}}\binom{N_{\max}}{y_1}\cdots \binom{N_{\max}}{y_{m-1}}\nonumber\\
& =~~ \binom{(m-1)N_{\max}}{k-x_{\max}} ~~<~~ \left( \frac{e(m-1)N_{\max}}{k-x_{\max}} \right)^{k-x_{\max}} 
\end{align}

Now apply (\ref{eqn:boxandballtrick_very_sparse}) to the second term of (\ref{eqn:mainestimate_very_sparse}), we have (starting from the second line, we replace $x_{max}$ to be $j$ for simplicity):
\begin{align}
\qqq^{2k}\sum\limits_{x_{\max} =\ceil*{\frac{k+1}{2}}}^{k-1}&\Bigg(\binom{m}{1} \binom{N_{\max}}{x_{\max}} \sum\limits_{\substack{\sum_{i=1}^{m-1}y_i = k - x_{\max}\\0 \leq y_i \leq x_{\max}}}\binom{N_{\max}}{y_1} \cdots \binom{N_{\max}}{y_{m-1}}\qqq^{x_{\max}(k-x_{\max})} \Bigg) \nonumber\\
<& \sum\limits_{j =\ceil*{\frac{k+1}{2}}}^{k-1}\Bigg(m \left(\frac{eN_{\max}}{j}\right)^{j}\qqq^{2k + j(k-j)} \left( \frac{e(m-1)N_{\max}}{k-x_{\max}} \right)^{k-x_{\max}}  \Bigg)\nonumber\\
 =& \sum\limits_{j =\ceil*{\frac{k+1}{2}}}^{k-1}\Bigg(m^{k-j+1} N_{\max}^{k} e^k \left(\frac{1}{j}\right)^j \left(\frac{1}{k-j} \right)^{k-j} \qqq^{2k + j(k-j)} \Bigg)\nonumber\\
 <& \sum\limits_{j =\ceil*{\frac{k+1}{2}}}^{k-1}\Bigg(m^{k-j+1} N_{\max}^{k} e^k \left(\frac{2}{k}\right)^{k} \qqq^{2k + j(k-j)} \Bigg)\label{eqn:secondtermmain}
\end{align}
where the last inequality holds due to the inequality of arithmetic and geometric means.
%\yusu{Minghao, there are a few places below and also in Appendix \ref{appendix:lem:insertioncaseB_very_sparse}, you use $n^{\epsilon}$ instead of $n^3$, and I changed them. But later when you read, be careful in case there are some still remain.}

Note that by tedious elementary calculation, we know $[2k+j(k-j)] /k\geq (k-j+1) /2 \geq 1$ when $\ceil*{(k+1)/2} \leq j \leq k-1$.
Since $$\qqq \leq \frac{1}{2ek^{\frac{1}{k}}}\left(\frac{1}{n^3m^2} \right)^{\frac{1}{k}} \frac{k}{N_{\max}} < 1$$ by Eqn. (\ref{eqn:qbound1_very_sparse}), for each $j$ satisfying $\ceil*{(k+1)/2} \leq j \leq k-1$, we have:
\begin{align}
&m^{k-j+1} N_{\max}^{k} e^k \left(\frac{2}{k}\right)^{k} \qqq^{2k + j(k-j)} \nonumber\\
\leq & m^{k-j+1} N_{\max}^{k} e^k\left(\frac{2}{k}\right)^{k} \qqq^{k\left(\frac{k-j+1}{2}\right)}\nonumber\\ 
\leq & m^{k-j+1} N_{\max}^{k} e^k\left(\frac{2}{k}\right)^{k} \left(\frac{1}{\left( 2e \right)^k} \frac{1}{kn^3} \frac{k^k}{N_{\max}^k} \right)^{\frac{k-j+1}{2}} \left(\frac{1}{m}\right)^{k-j+1} \nonumber\\ 
\leq & N_{\max}^{k} e^k\left(\frac{2}{k}\right)^{k} \left(\frac{1}{\left( 2e \right)^k} \frac{1}{kn^3} \frac{k^k}{N_{\max}^k} \right) \nonumber\\
= &\frac{1}{kn^{3}}\label{eqn:uselowerbound_very_sparse}
\end{align}
where the inequality on the fourth line holds as $k \leq N_{\max}$ and $(k-j+1)/2 \geq 1$.

\paragraph{\bf The third term of Eqn. (\ref{eqn:mainestimate_very_sparse}):~} For the third term of (\ref{eqn:mainestimate_very_sparse}), note that $(k-1)^2 / 4 + 2k >  k^2 / 4$ and by plugging in the condition $$\qqq \leq \frac{1}{e^{\frac{4}{k}}}\left(\frac{1}{n^3m^k} \right)^{\frac{4}{k^2}}\left(\frac{k}{N_{\max}} \right)^{\frac{4}{k}}<1,$$ we have
\begin{align}\label{eqn:2ndterm_very_sparse}
\binom{mN_{\max}}{k} \qqq^{\frac{(k-1)^2}{4}+2k} < \left(\frac{emN_{\max}}{k}\right)^{k}\qqq^{\frac{k^2}{4}} \leq \left(\frac{emN_{\max}}{k}\right)^{k} \frac{1}{e^k} \frac{1}{n^3m^k}\frac{k^k}{N_{\max}^k} = \frac{1}{n^{3}}
\end{align}

Finally, combining (\ref{eqn:firsttermestimate_very_sparse}), (\ref{eqn:uselowerbound_very_sparse}) and (\ref{eqn:2ndterm_very_sparse}), we have:
\begin{align*}
E[\aI ] \leq \frac{1}{n^{3}} + \frac{k}{2} \cdot \frac{1}{kn^{3}}  + \frac{1}{n^{3}} = \frac{5}{2n^{3}}
\end{align*}
This concludes Lemma \ref{lem:insertioncaseA_very_sparse}. 

\subsection{The missing details in case (\romannumeral 2) of part (a) of Theorem \ref{thm:type_2_very_sparse}}\label{appendix:detailscaseBinsertiononly_very_sparse}
Set $N_u := |B^{\mathcal{X}_n}_r(u)|$ and $N_v := |B^{\mathcal{X}_n}_r(v)|$. Let $\tilde{k} := \floor*{\aK / 2} - 2$. Easy to see $\tilde{k} \geq 1$ since $\aK \geq 8 \myBC^2 \geq 8$. For every set $S$ of $(\tilde{k}+2)$ vertices in $\tilde{G}_{uv}^{local}$, let $A_S$ be the event ``\emph{$S$ is a clique in $\tilde{G}_{uv}^{local}$ containing edge $(u,v)$ given $\aF$}'' and $\aY_S$ its indicator random variable. Set
\begin{align*}
\aY = \sum\limits_{|S|=\tilde{k}+2} \aY_S. 
\end{align*}
Then $\aY$ is the number of cliques of size $(\tilde{k}+2)$ in $\tilde{G}_{uv}^{local}$ containing edge $(u,v)$ given $\aF$. Linearity of expectation gives:
\begin{align}\label{eqn:expectationI_very_sparse}
\myE[\aY] = \sum\limits_{|S|=\tilde{k}+2}\myE[\aY_S] = \sum\limits_{\substack{x_1 + x_2 = \tilde{k} \\ 0 \leq x_1 \leq N_u - 1 \\ 0\le x_2 \le N_v - 1}} \binom{N_u-1}{x_1} \binom{N_v-1}{x_2}\qqq^{(x_1+1)(x_2+1)-1}
\end{align}

To estimate this quantity, we first prove the following result:

\begin{lemma}\label{lem:insertioncaseB_very_sparse}
If $\tilde{k} \geq 1$ and
\begin{align}\label{eqn:qbound2_very_sparse}
\qqq ~~\leq~~ \frac{1}{e}\left(\frac{1}{n^3}\right)^{\frac{1}{\tilde{k}}}\frac{\tilde{k}}{N_u + N_v} 
\end{align}
hold, then $\myE[\aY] = O\left(n^{-3}\right)$
\end{lemma}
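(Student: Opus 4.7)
The plan is to bound $\myE[\aY]$ by a direct, term-by-term estimate that extracts a uniform factor of $\qqq^{\tilde{k}}$ and then collapses the remaining binomial sum via Vandermonde's identity.

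First, I would observe the key algebraic identity
\[
(x_1+1)(x_2+1) - 1 \;=\; x_1 + x_2 + x_1 x_2 \;=\; \tilde{k} + x_1 x_2 \;\ge\; \tilde{k},
\]
using $x_1 + x_2 = \tilde{k}$ and $x_1, x_2 \ge 0$. Since $\qqq \le 1$, every exponent of $\qqq$ in Eqn.~(\ref{eqn:expectationI_very_sparse}) is at least $\tilde{k}$, so each term is bounded above by its value at exponent $\tilde{k}$. This lets me factor out $\qqq^{\tilde{k}}$ and write
\[
\myE[\aY] \;\le\; \qqq^{\tilde{k}} \sum_{\substack{x_1 + x_2 = \tilde{k} \\ x_1,x_2 \ge 0}} \binom{N_u-1}{x_1}\binom{N_v-1}{x_2}.
\]

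Next I would apply Vandermonde's identity to the remaining sum to obtain
\[
\sum_{\substack{x_1 + x_2 = \tilde{k} \\ x_1,x_2 \ge 0}} \binom{N_u-1}{x_1}\binom{N_v-1}{x_2} \;=\; \binom{N_u + N_v - 2}{\tilde{k}} \;\le\; \binom{N_u + N_v}{\tilde{k}} \;<\; \left(\frac{e(N_u + N_v)}{\tilde{k}}\right)^{\tilde{k}},
\]
using the standard inequality $\binom{n}{k} < (en/k)^k$ that has already been invoked elsewhere in the proof of Theorem~\ref{thm:type_2_very_sparse}.

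Finally, I would plug in the hypothesis (\ref{eqn:qbound2_very_sparse}) on $\qqq$, which gives
\[
\qqq^{\tilde{k}} \;\le\; \frac{1}{e^{\tilde{k}}} \cdot \frac{1}{n^3} \cdot \frac{\tilde{k}^{\tilde{k}}}{(N_u + N_v)^{\tilde{k}}},
\]
and multiply the two bounds to obtain
\[
\myE[\aY] \;<\; \frac{1}{e^{\tilde{k}}} \cdot \frac{1}{n^3} \cdot \frac{\tilde{k}^{\tilde{k}}}{(N_u + N_v)^{\tilde{k}}} \cdot \frac{e^{\tilde{k}}(N_u + N_v)^{\tilde{k}}}{\tilde{k}^{\tilde{k}}} \;=\; \frac{1}{n^3},
\]
which is exactly the claimed $O(n^{-3})$ bound. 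There is no serious obstacle here: unlike the analogous Lemma~\ref{lem:insertioncaseA_very_sparse} for case~(i), we only have two ``groups'' ($B_r^{\mathcal{X}_n}(u)$ and $B_r^{\mathcal{X}_n}(v)$) rather than $m$, so no complicated case analysis on $x_{\max}$ is needed; Vandermonde collapses the sum in one stroke, and the only insight required is the lower bound $(x_1+1)(x_2+1) - 1 \ge \tilde{k}$ that permits factoring out $\qqq^{\tilde{k}}$.
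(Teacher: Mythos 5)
Your proposal is correct and follows essentially the same route as the paper's proof: observe $(x_1+1)(x_2+1)-1 = \tilde{k} + x_1 x_2 \geq \tilde{k}$, factor out $\qqq^{\tilde{k}}$, collapse the remaining sum via Vandermonde to $\binom{N_u+N_v-2}{\tilde{k}} \leq \binom{N_u+N_v}{\tilde{k}}$, and apply $\binom{n}{k} < (en/k)^k$ with the hypothesis on $\qqq$. The only cosmetic difference is that the paper first replaces $\binom{N_u-1}{x_1}\binom{N_v-1}{x_2}$ by $\binom{N_u}{x_1}\binom{N_v}{x_2}$ before applying Vandermonde, whereas you apply Vandermonde directly and then enlarge; both yield the same $\binom{N_u+N_v}{\tilde{k}}\qqq^{\tilde{k}}$ bound.
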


\begin{proof}
Easy to see that if $\tilde{k} > N_u + N_v - 2$, then the summation on the right hand side of Eqn. (\ref{eqn:expectationI_very_sparse}) is $0$. Now we move on to the case when $\tilde{k} \leq (N_u -1 )+ (N_v - 1) < N_u + N_v$. Easy to see $\qqq < 1$ in this case. Thus, the right hand side of (\ref{eqn:expectationI_very_sparse}) can be bounded from above by:

\begin{align}
&\sum\limits_{\substack{x_1 + x_2 = \tilde{k} \\ 0 \leq x_1 \leq N_u - 1 \\ 0 \leq x_2 \leq N_v - 1}} \binom{N_u-1}{x_1} \binom{N_v-1}{x_2}\qqq^{(x_1+1)(x_2+1)-1} \nonumber\\
&\leq~~ \sum\limits_{i=0}^{\tilde{k}}\binom{N_u}{i}\binom{N_v}{\tilde{k}-i}\qqq^{\tilde{k} + i(\tilde{k}-i)} ~~<~~ \sum\limits_{i=0}^{\tilde{k}}\binom{N_u}{i}\binom{N_v}{\tilde{k}-i}\qqq^{\tilde{k}} ~~=~~ \binom{N_u+N_v}{\tilde{k}}\qqq^{\tilde{k}} \nonumber\\
&~~<~~ \left(\frac{e(N_u + N_v)}{\tilde{k}}\right)^{\tilde{k}}\qqq^{\tilde{k}} ~~\leq~~ \frac{1}{n^3} \nonumber
\end{align}
where the last inequality holds due to condition (\ref{eqn:qbound2_very_sparse}).
\end{proof}

%The proof of this technical result can be found in Appendix \ref{appendix:lem:insertioncaseB_very_sparse}. 

Easy to see that there exist two constants $c^{b}_1$ and $c^{b}_2$ which depend on the \Bconst{} $\myBC$ and $\alpha$, 
such that %if $\aK \leq 4 N_{\max}$ and 
\begin{align*}
\text{if}  ~~ \aK \geq 8 \myBC^2 ~~\text{and}~~&\qqq \leq  c^b_1 \cdot \left(1 / n\right)^{c^b_2/\aK}, \text{ then the conditions in Eqn. (\ref{eqn:qbound2_very_sparse}) will hold. }
\end{align*}

On the other hand, we have
\begin{align*}
\myprob\left[ \omega_{u,v}\left(\tilde{G}_{uv}^{local}\right) \ge \aK / 2 \middle| \aF \right] = \myprob[Y > 0] \leq \myE[Y]
\end{align*}
Thus, by Lemma \ref{lem:insertioncaseB_very_sparse}, we know that 
\begin{align}\label{eqn:caseBalmostfinal_very_sparse}
\text{If} ~\aK \geq 8 \myBC^2 &~\text{and}~ \qqq \leq ~ c^b_1 \cdot \left(1 / n\right)^{c^b_2/\aK}, ~\text{then} ~ \myprob \left[\omega_{u,v}\left(\tilde{G}_{uv}^{local}\right) \ge \aK / 2 \middle| \aF \right] = O(n^{-3})
\end{align}

\subsection{The proof of Lemma \ref{lem:insertioncaseA_very_sparse_looser}}
\label{appendix:lem:insertioncaseA_very_sparse_looser}
\begin{proof}
By using a similar argument as in Appendix \ref{appendix:lem:insertioncaseA_very_sparse}, it is easy to see that the maximum value of $\sum_{i=1}^{m}x_i^2$, under the constraints $\sum_{i=1}^{m}x_i = k$ and $x_i \in [0, N_{\max}]$ for each $i\in [1,m]$, is $rN_{\max}^2 + (k - rN_{\max})^2$ where $r = \floor*{k /N_{\max}}$. The maximum can be achieved when $(x_1, x_2, \cdots, x_m) = (\underbrace{N_{\max}, \cdots, N_{\max}}_{r}, k - rN_{\max}, \ldots)$. Thus, we have

\begin{align}
\myE[\aI] \leq &~~ \qqq^{2k}\sum\limits_{\substack{x_1 + x_2 + \cdots + x_m = k \\x_i \geq 0}} \binom{N_{\max}}{x_1} \binom{N_{\max}}{x_2}\cdots \binom{N_{\max}}{x_m}\qqq^{(k^2-\sum_{i=1}^{m}x_i^2)/2} \nonumber\\
< & \left(\sum\limits_{\substack{x_1 + x_2 + \cdots + x_m = k \\x_i \geq 0}} \binom{N_{\max}}{x_1} \binom{N_{\max}}{x_2}\cdots \binom{N_{\max}}{x_m}\right) \cdot \qqq^{\frac{k^2 - \left(rN_{\max}^2 + (k - rN_{\max})^2\right)}{2} + 2k}\nonumber\\
= & \binom{mN_{\max}}{k} \qqq^{\frac{k^2 - \left(rN_{\max}^2 + (k - rN_{\max})^2\right)}{2} + 2k}\nonumber\\
< & \left(\frac{emN_{\max}}{k}\right)^{k} \qqq^{k(rN_{\max} + 1) - \frac{r(r+1)}{2}N_{\max}^2}\nonumber\\
< & \left(\frac{emN_{\max}}{k}\right)^{k} \qqq^{k\left(\left(\frac{k}{N_{\max}} -1\right)N_{\max} + 1\right) - \frac{1}{2} \frac{k}{N_{\max}} \left(\frac{k}{N_{\max}} +1 \right)  N_{\max}^2}\label{eqn:mainestimate_very_sparse_looser_1}\\
= & \left(\frac{emN_{\max}}{k}\qqq^{\frac{1}{2}k - \frac{3}{2}N_{\max} + 1} \right)^k\label{eqn:mainestimate_very_sparse_looser}
\end{align}
where Eqn. (\ref{eqn:mainestimate_very_sparse_looser_1}) holds since $k / N_{\max} \geq r > k / N_{\max} - 1$. 

Pick a constant $C_3$, which only depends on the \Bconst{} $\myBC$ and $\alpha$, such that $$\frac{C_3 \floor*{\log_{1 /\qqq}n}}{2 |\Lambda|} - 3 \geq 16 \log_{1 / \qqq}n \geq 6 N_{\max}.$$ This can be done since $N_{\max}$ is a constant and $\left(1 /n \right)^{8 / \left(3N_{\max}\right)} \leq \qqq  < 1$ implies $\log_{1 / \qqq}n \geq  3N_{\max}/8$. Set $\aK = C_3 \floor*{\log_{1 /\qqq}n}$. Recall that $k = \floor*{\aK / (2 |\Lambda|)} - 2$, thus $k \geq 16 \log_{1 / \qqq}n \geq 6 N_{\max}$. Also note that $m \leq n$. Hence, we have the following inequality.
%\yusu{Minghao: double check the second inequality: it seems to be missing a `1'.}
\begin{align}\label{eqn:mainestimate_very_sparse_looser_continue}
 \left(\frac{emN_{\max}}{k}\qqq^{\frac{1}{2}k - \frac{3}{2}N_{\max} + 1} \right)^{k} <  \left(\frac{enN_{\max}}{k}\qqq^{\frac{1}{4}k} \right)^{k} \leq \left(\frac{eN_{\max}}{k}n^{-3} \right)^{k} = O(n^{-3})
\end{align}

Finally, combining (\ref{eqn:mainestimate_very_sparse_looser}) and (\ref{eqn:mainestimate_very_sparse_looser_continue}), we have $\myE[\aI] = O(n^{-3})$.
\end{proof}

%%%%%%%%%%%%%%%%%%%%%%%%%%%%%%%%%%%%%%%%%%%%%%%%%%%%%%%%%
\subsection{Proof of Lemma \ref{thm:ER_lowerbound}}
\label{appendix:thm:ER_lowerbound}

We will use the standard second moment method to prove this lemma. For completeness, we first state the second moment method. For those who are familiar with this method, our main technical step is to estimate the summation on the right hand side of Eqn. (\ref{eqn:2nd_delta*/mu}).
%We state the second moment method for completeness. For those who are familiar with this method, the key step is to estimate the summation on the right hand side of Eqn. (\ref{eqn:2nd_delta*/mu}).

\begin{definition}[Symmetric random variables]
We say random variables $Z_1, \cdots, Z_m$, where $Z_i$ is the indicator random variable for event $U_i$, are \textbf{symmetric} if for every $i \neq j$ there is a measure preserving mapping of the underlying probability space that permutes the $m$ events and sends event $U_i$ to event $U_j$.
\end{definition}

Let $Z$ be a nonnegative integral-valued random variable, and suppose we have a decomposition $Z = Z_1 + \cdots + Z_m$, where $Z_i$ is the indicator random variable for event $U_i$ and $Z_1, \cdots, Z_m$ are symmetric. For indices $i,j$, write $i \sim j$ if $i \neq j$ and the events $U_i, U_j$ are not independent. For any fixed index $i$, we set 
\begin{align*}
\Delta^* := \sum\limits_{j \sim i}\myProb[U_j \mid U_i], 
\end{align*}
and note that by the symmetry of $Z_i$, $\Delta^*$ is independent of the index $i$ (thus we are not denoting it by $\Delta_i^*$). 
%where $i$ is any fixed index. (Note that by the symmetry of $Z_i$, $\Delta^*$ is independent of $i$.)

\begin{theorem}[The second moment method \cite{alon2016probabilistic}]\label{thm:2ndmoment}
If $\myE[Z] \rightarrow \infty$ and $\Delta^* = o(\myE[Z])$ as $m \rightarrow \infty$, then $\myProb[Z = 0] \rightarrow 0$.  
\end{theorem}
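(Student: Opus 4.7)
The plan is to derive the conclusion from Chebyshev's inequality once the variance of $Z$ has been bounded using the symmetric decomposition $Z = Z_1 + \cdots + Z_m$. First I would observe that since $Z$ is nonnegative and integer-valued, the event $\{Z = 0\}$ is contained in the event $\{|Z - \myE[Z]| \geq \myE[Z]\}$, so Chebyshev's inequality immediately yields $\myProb[Z = 0] \leq \mathrm{Var}[Z]/\myE[Z]^2$. Thus the entire task reduces to establishing $\mathrm{Var}[Z] = o(\myE[Z]^2)$ under the two hypotheses.

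Next I would split $\mathrm{Var}[Z]$ into its diagonal and off-diagonal contributions. For the diagonal part, since each $Z_i$ is a 0/1-indicator, $\mathrm{Var}[Z_i] \leq \myE[Z_i]$, so summing gives $\sum_i \mathrm{Var}[Z_i] \leq \myE[Z]$. For the off-diagonal part, pairs $(i,j)$ with $i \not\sim j$ have independent indicators and therefore contribute zero; for pairs with $i \sim j$ I would use the elementary bound $\mathrm{Cov}[Z_i, Z_j] \leq \myE[Z_i Z_j] = \myProb[U_i]\,\myProb[U_j \mid U_i]$. Summing first over $j \sim i$ and then over $i$, and invoking the symmetry hypothesis to conclude that $\sum_{j \sim i} \myProb[U_j \mid U_i] = \Delta^*$ does not depend on $i$, the total covariance contribution is at most $\Delta^* \sum_i \myProb[U_i] = \Delta^* \cdot \myE[Z]$. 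Combining the two parts gives $\mathrm{Var}[Z] \leq \myE[Z](1 + \Delta^*)$.

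Feeding this bound back into Chebyshev yields $\myProb[Z = 0] \leq (1 + \Delta^*)/\myE[Z]$, and the two hypotheses $\myE[Z] \to \infty$ and $\Delta^* = o(\myE[Z])$ together force the right-hand side to $0$. There is no genuine obstacle: the only step requiring any care is the formal justification that $\sum_{j \sim i} \myProb[U_j \mid U_i]$ is independent of $i$, but this is immediate from the measure-preserving permutation in the definition of symmetry, which rearranges the terms of the inner sum without changing their values. The substance of the theorem is thus the recognition that the symmetric structure of the decomposition is exactly what allows the second-moment bound $\mathrm{Var}[Z]/\myE[Z]^2$ to be controlled via the single quantity $\Delta^*/\myE[Z]$.
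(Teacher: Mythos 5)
Your proof is correct, and it is exactly the standard Chebyshev-plus-variance-decomposition argument found in Alon and Spencer. The paper does not actually supply its own proof of this theorem: it is stated as a known result and cited to \cite{alon2016probabilistic}, so there is nothing in the paper to compare against beyond noting that your argument reconstructs the textbook proof faithfully, including the one subtle point you flag (that symmetry makes $\sum_{j \sim i}\myProb[U_j \mid U_i]$ independent of $i$, so $\Delta^*$ is well-defined and the covariance sum collapses to $\Delta^* \cdot \myE[Z]$).
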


Now we are ready to prove Lemma \ref{thm:ER_lowerbound}.

\begin{proof}[Proof of Lemma \ref{thm:ER_lowerbound}]
Set $k = \floor*{\log_{1 / p}{n}}$. Now consider all the $k-$set $S_i$ of vertices in $G(n,p)$. Let $U_i$ be the event ``$S_i$ is a clique'' and $Z_i$ its indicator random variable. (All $k-$sets ``look the same'' so that the $Z_i$'s are symmetric.) $I$ is a finite index set enumerating all the $k-$sets in $G(n, p)$. Set
\begin{align*}
Z = \sum\limits_{i\in I}Z_i
\end{align*}
so that $Z$ is the number of $k-$cliques in $G(n,p)$. Linearity of Expectation gives:
\begin{align*}
\myE[Z] = \sum\limits_{i \in I}\myE[Z_i]= \binom{n}{k}p^{\binom{k}{2}}
\end{align*}
Easy to see that
\begin{align*}
\Delta^{*} =\sum\limits_{j \sim i}\myProb[U_j | U_i] = \sum\limits_{\ell=2}^{k-1}\binom{k}{\ell}p^{\binom{k}{2}-\binom{\ell}{2}}\binom{n-k}{k-\ell}
\end{align*}

Since $k = \floor*{\log_{1 / p}{n}}$ and $p \leq (1 / n)^{1 / \sqrt[4]{n}}$, we know that $p^{k-1} > p^k > 1 / n$, $k \leq n^{1/4}$ and $\log k / \log n \leq 1/4$. Also note that $p \geq \left(1 / n\right)^{1 /11}$. Easy to see that
\begin{align*}
k + 1 > \log_{1 / p}{n} \geq \frac{1}{\xi} = 11 
\end{align*}
which further implies $k > 10$.  

Note that for sufficiently large $n$, we have $n - k > n / 2$. Thus, using $p^{k-1} > 1 / n$ as derived earlier, we have: 
\begin{align*}
\myE[Z] = \binom{n}{k}p^{\binom{k}{2}} > \frac{(n-k)^k}{k^k}p^{\frac{k(k-1)}{2}} > \left(\frac{n}{2k}\right)^{k} n ^{-\frac{k}{2}} = n^{\frac{k}{2}\left(1-\frac{2\log{(2k)}}{\log n}\right)} > n^{\frac{k}{4}} \rightarrow \infty 
\end{align*}
To apply Theorem \ref{thm:2ndmoment}, it suffices to estimate the term $\Delta^* / \myE[Z]$.
\begin{align}\label{eqn:2nd_delta*/mu}
\frac{\Delta^*}{\myE[Z]} = \dfrac{\sum\limits_{\ell=2}^{k-1}\binom{k}{\ell}\binom{n-k}{k-\ell}(p)^{\binom{k}{2}-\binom{\ell}{2}}}{\binom{n}{k}(p)^{\binom{k}{2}}} =\sum\limits_{\ell=2}^{k-1}\dfrac{\binom{k}{\ell}\binom{n-k}{k-\ell}}{\binom{n}{k}}(p)^{-\binom{\ell}{2}}
\end{align}

We estimate the summation on the right hand side term by term. Let $$g(\ell) := \frac{\binom{k}{\ell}\binom{n-k}{k-\ell}}{\binom{n}{k}}(p)^{-\binom{\ell}{2}}.$$ 
%We want to show that $g(l) \leq \max\{g(2),g(k-1)\}$ and further get an upper bound for both $g(2)$ and $g(k-1)$. 
Note that for $\ell \in [2,k-1]$, we have
\begin{align*}
g(\ell) & ~~=~~ \frac{\binom{k}{\ell}\binom{n-k}{k-\ell}}{\binom{n}{k}}(p)^{-\binom{\ell}{2}} ~~\leq~~ \frac{\binom{k}{\ell}\frac{(n-k)^{k-\ell}}{(k-\ell)!}}{\frac{(n-k)^k}{k!}}n^{\frac{1}{k}\frac{\ell(\ell-1)}{2}} ~~\leq~~ \frac{\binom{k}{\ell}k^\ell}{(n-k)^\ell}n^{\frac{1}{k}\frac{\ell(\ell-1)}{2}} \nonumber\\
& ~~\leq~~ \frac{(\frac{ek}{\ell})^\ell k^\ell}{(n-k)^\ell}n^{\frac{1}{k}\frac{\ell(\ell-1)}{2}} ~~=~~ n^{-\frac{\ell\log{\left(\frac{\ell(n-k)}{ek^2}\right)}}{\log{n}} + \frac{1}{k}\frac{\ell(\ell-1)}{2}}
\end{align*}

Now set $$h(\ell) = -\frac{\ell\log{\left(\frac{\ell(n-k)}{ek^2}\right)}}{\log{n}} + \frac{1}{k}\frac{\ell(\ell-1)}{2};$$ and thus by the above inequality we have $g(\ell) \le n^{h(\ell)}$. 
We claim that $\forall \ell \in [2,k-1], h(\ell) \leq \max \{h(2), h(k-1)\}$. We then then further use $h(2)$ and $h(k-1)$ to derive an upper bound on $g(l)$. 

Indeed, by the following direct calculation, we can easily prove this:

Note that its derivative with respect to $\ell$ is $$h'(\ell) = -\frac{\log{\ell} + \log{(n-k) - 2\log{k}}}{\log{n}} + \frac{2\ell-1}{2k}.$$ Further calculate its second derivative: $$h''(\ell) = -\frac{1}{\log{n}}\frac{1}{\ell} + \frac{1}{k}$$. Note that $\ell_0 = k / \log{n}$ is the only solution of $h''(\ell)=0$. Easy to check that $\ell_0 \leq k-1$. Therefore, we have the following two cases: 
\begin{description}\denselist
\item[Case (\romannumeral 1)] If $\ell_0 < 2$, then $h'(\ell)$ is strictly increasing on $\ell\in[2,k-1]$;
\item[Case (\romannumeral 2)] If $\ell_0 \in [2, k-1]$, then $h'(\ell)$ is strictly decreasing on $[2, \ell_0]$ and strictly increasing on $[\ell_0, k-1]$.
\end{description}
Note that 
$$h'(2) < -\frac{\log{2}+\log{(n/2)} - 2\log{k}}{\log{n}} + \frac{3}{2k} = -1 +\frac{2\log{k}}{\log{n}} + \frac{3}{2k} < -1 + \frac{1}{2} + \frac{3}{20} < 0.$$ 
Thus in either case $h'(\ell)$ can become $0$ at most once within $\ell \in [2, k-1]$, and we have $\max_{\ell\in [2,k-1]}h(\ell) = \max \{h(2), h(k-1)\}$. 

Routine calculation shows that (using that $n-k > n/2$), for $n$ large enough: 
\begin{align*}
h(2) &< - \frac{2\left[\log{(2(n /2)) - 1 - 2\log k}\right]}{\log n} + \frac{1}{k} = -2 + \frac{1}{k} + \frac{2}{\log n}  + \frac{4\log k}{\log n} < -\frac{1}{2},\\
h(k-1) &< -\frac{(k-1)\left[\log{(n / 2)} -1 -\log k - \log \left(k /(k-1)\right)\right]}{\log n} + \frac{k^2-3k +2}{2k} \\
&< \left[\frac{k^2-3k +2}{2k} - (k-1)\right] + \frac{k(1+ \log 2)}{\log n} + \frac{k\log k }{\log n} + \frac{k\log \left(k / (k-1)\right)}{\log n} \\
&< -\frac{1}{2} + \frac{1}{10} - \frac{k}{6} < -\frac{1}{2}.
\end{align*}
Thus, $\forall \ell \in [2, k-1]$, we have $g(\ell) < n^{-1 /2}$.
It then follows that 
$$\sum\limits_{\ell=2}^{k-1}g(\ell)  < k \cdot n^{-\frac{1}{2}} \leq n^{\frac{1}{4}} \cdot n^{-\frac{1}{2}} = n^{-\frac{1}{4}}.$$ 
Hence by Eqn (\ref{eqn:2nd_delta*/mu}), we have $\Delta^* / \myE[Z] < n^{-1 /4} \rightarrow 0$, and therefore $\myProb[Z = 0] \rightarrow 0$ by Theorem \ref{thm:2ndmoment}.
\end{proof}

%%%%%%%%%%%%%%%%%%%%%%%%%%%%%%%%%%%%%%%%%%%%%%%%%%%%%%%%%%%%%%%%%%%%%%%%%%%
\subsection{Proof of Theorem \ref{thm:type_2_quite_sparse}}
\label{appendix:thm:type_2_quite_sparse}

%\begin{proof}[Proof of Theorem \ref{thm:type_2_quite_sparse}.]
\paragraph{\bf Proof of part (a).~} We use the same notation $\tilde{A}_{uv}$ and $B_{uv}$ as in the proof of Theorem \ref{thm:type_2_very_sparse}. Now we set 
\begin{align*}
N_{\max} := \frac{5\log{n}}{\log{ \left(\log{n} / (\sigma \theta 2^d nr^d)\right)}}.
\end{align*}
Again, denote $\aF$ to be the event that ``for every $v \in \mathcal{X}_n$, the ball $B_{r}(v)\cap \mathcal{X}_n$ contains at most $N_{\max}$ points''; and $\aF^c$ denotes the complement event of $\aF$. By Lemma \ref{lem:num_points_1rball_quite_sparse}, we know that, $\myprob[\aF^c] = O(n^{-3})$.

Let $\aK_n$ be a positive number to be determined such that $\aK_n \rightarrow \infty$ as $n \rightarrow \infty$. By applying the pigeonhole principle and the union bound, we have: 
\begin{align}\label{eqn:mainresult_quite_sparse}
&\myprob\left[ \omega_{u,v}\left(G_n^{0,\qqq}\right) \ge \aK_n  \middle| \aF \right] \nonumber \\
\le  &\myprob\left[ \omega_{u,v}\left(G_n^{0,\qqq}|_{\tilde{A}_{uv}}\right) \ge \aK_n / 2\middle| \aF\right] + \myprob\left[ \omega_{u,v}\left(G_n^{0,\qqq}|_{B_{uv}}\right)\ge \aK_n / 2 \middle| \aF\right] 
\end{align}

\paragraph{\bf Case (\romannumeral 1): bounding the first term in Eqn. (\ref{eqn:mainresult_quite_sparse}).~} 
Applying Theorem \ref{lem:BCLdoubling} for points in $A_{uv}$ gives a \myWSP{} $\mathcal{P} = \{P_i\}_{i\in \Lambda}$ of $A_{uv}$ with $|\Lambda| \le \myBC^2$ being a constant. Augment each $P_i$ to $\tilde{P}_i = P_i \cup \{u\} \cup \{v\}$. 
Check Figure \ref{fig:twocases_very_sparse} (a). Again, by applying pigeonhole principle and the union bound, we have: 
\begin{align}\label{eqn:caseAunionbound_quite_sparse}
&\myprob\left[ \omega_{u,v}\left(G_n^{0,\qqq}|_{\tilde{A}_{uv}}\right) \ge \aK_n / 2 \middle| \aF\right] 
\leq \sum_{i=1}^{|\Lambda|} \myprob\left[\omega_{u,v}\left(G_n^{0,\qqq}|_{\tilde{P}_{i}}\right) \ge \aK_n / (2|\Lambda|) \middle| \aF\right]
\end{align}    

Now set $k_n := \floor*{\aK_n / (2|\Lambda|)}-2$. Easy to see that $k_n \rightarrow \infty$ as $n \rightarrow \infty$. Same as in the proof for Theorem \ref{thm:type_2_very_sparse}, we have
\begin{align}\label{eqn:expectationX_quite_sparse}
&\myprob \left[\omega_{u,v}\left(G_n^{0,\qqq}|_{\tilde{P}_{i}}\right) 
 \ge  k_n+2 \middle| \aF\right] \nonumber\\
 \leq & ~\qqq^{2k_n}\sum\limits_{\substack{x_1 + x_2 + \cdots + x_m = k_n \\ 0 \leq x_i \leq N_{\max}}} \binom{N_{\max}}{x_1} \binom{N_{\max}}{x_2}\cdots \binom{N_{\max}}{x_m}\qqq^{(k_n^2-\sum_{i=1}^{m}x_i^2)/2} 
\end{align}
where $m \leq n$ is the number of $C_i^{(j)}$ in the clique-partition $\tilde{P}_i$.

If $\aK_n \leq 2N_{\max}$, then $k_n \in [1, N_{\max}]$. By applying Lemma \ref{lem:insertioncaseA_very_sparse}, we have that if $1 \leq k_n \leq N_{\max}$, then there exist constants $c^a_1$ and $c^a_2$ (which depend on the \Bconst{} $\myBC$), such that if 
\begin{align*}
\qqq \le c^a_1 \cdot \left(\frac{1}{n}\right)^{c^a_2/\aK_n} \frac{\aK_n}{N_{\max}},
\end{align*}
then the right hand side of Eqn. (\ref{eqn:expectationX_quite_sparse}) is $O(n^{-3})$. 

Thus, following the same argument in the proof of part (a) of Theorem \ref{thm:type_2_very_sparse}, we have
\begin{align}\label{eqn:caseAalmostfinal_quite_sparse}
\text{If}~ \aK_n \leq 2N_{\max} &~\text{and}~ \qqq \leq c^a_1 \cdot \left(1 /n\right)^{c^a_2/\aK_n} (\aK_n / N_{\max}),\nonumber\\
 &~\text{then}~\myprob\left[ \omega_{u,v}\left(G_n^{0,\qqq}|_{\tilde{A}_{uv}}\right) \ge \aK_n / 2 \middle| \aF\right] = O(n^{-3}). 
\end{align}
Finally, suppose $\aK_n > \aK_0 = 2N_{\max}$. 
Using Eqn (\ref{eqn:caseAalmostfinal_quite_sparse}), we know that if 
\begin{align*}
\qqq \le c^a_1 \cdot \left(\frac{1}{n}\right)^{c^a_2/\aK_0}\frac{\aK_{0}}{N_{\max}} = 2c_1^a \left(\frac{\sigma \theta 2^d \left(nr^d\right)}{\log n} \right)^{\frac{c_2^a}{10}} = \left(2c_1^a \cdot \left(\sigma \theta 2^d \right)^{\frac{c_2^a}{10}}\right) \left(\frac{nr^d}{\log n} \right)^{\frac{c_2^a}{10}}
\end{align*}
and $\aK_n > \aK_0$, then $$\myprob\left[ \omega_{u,v}\left(G_n^{0,\qqq}|_{\tilde{A}_{uv}}\right) \ge \aK_n / 2\middle| \aF\right] \le \myprob\left[ \omega_{u,v}\left(G_n^{0,\qqq}|_{\tilde{A}_{uv}}\right) \ge \aK_0 / 2 \middle| \aF\right] = O(n^{-3}). $$
% Then we have 
% $$\myprob\left[ G_n^{0,q}|_{\tilde{A}_{uv}}~\text {has a}~uv\text{-clique of size} \ge \frac{\aK_n}{2}\middle| \aF\right] \le \myprob\left[ G_n^{0,q}|_{\tilde{A}_{uv}}~\text {has a}~uv\text{-clique of size} \ge \frac{\aK_0}{2}\middle| \aF\right] = O(n^{-3}),$$
% where the second bound follows from Eqn (\ref{eqn:caseAalmostfinal_quite_sparse}) and assume that $q$ is
% \begin{align*}
% q \le c^a_1 \cdot \left(\frac{1}{n}\right)^{c^a_2/\aK_0}\frac{\aK_{0}}{N_{\max}} = 2c_1^a \left(\frac{\sigma \theta 2^d \left(nr^d\right)}{\ln n} \right)^{\frac{c_2^a}{10}} = \left(2c_1^a \cdot \left(\sigma \theta 2^d \right)^{\frac{c_2^a}{10}}\right) \left(\frac{nr^d}{\ln n} \right)^{\frac{c_2^a}{10}}. 
% \end{align*}
Set $C_1^a := 2c_1^a \cdot \left(\sigma \theta 2^d \right)^{c_2^a/10}$ and $C_2^a := c_2^a / 10$ be two constants. Combining this with Eqn. (\ref{eqn:caseAalmostfinal_quite_sparse}), we thus obtain that: 
\begin{align}\label{eqn:caseAfinal_quite_sparse}
\text{If}~ \aK_n \rightarrow \infty &~\text{and} ~\qqq \leq \min\left\{C^a_1 \cdot \left(nr^d / \log n\right)^{C^a_2} , ~c^a_1 \cdot \left(1 / n\right)^{c^a_2/\aK_n} (\aK_n / N_{\max})\right\}, \nonumber\\
&~\text{then}~\myprob\left[ \omega_{u,v}\left(G_n^{0,\qqq}|_{\tilde{A}_{uv}}\right) \ge \aK_n / 2\middle| \aF\right] = O(n^{-3}). 
\end{align}

\noindent{\bf Case (\romannumeral 2): bounding the second term in Eqn. (\ref{eqn:mainresult_quite_sparse}).} 
Recall that $B_{uv} = B_r^{\mathcal{X}_n}(u) \cup B^{\mathcal{X}_n}_r(v)$ (see Figure \ref{fig:twocases_very_sparse} (b)). We again use the notation $\tilde{G}_{uv}^{local}$ defined in the proof of part (a) of Theorem \ref{thm:type_2_very_sparse}. Set $N_u := |B^{\mathcal{X}_n}_r(u)|$ and $N_v := |B^{\mathcal{X}_n}_r(v)|$. Let $\tilde{k}_n := \floor*{\aK_n / 2} - 2$. Easy to see $\tilde{k}_n \geq 1$. Using the same argument as in Case (\romannumeral 2) in the proof of Theorem \ref{thm:type_2_very_sparse}, we have
\begin{align}\label{eqn:expectationI_quite_sparse}
\myprob\left[ \omega_{u,v}\left(\tilde{G}_{uv}^{local}\right) \ge \aK_n / 2 \middle| \aF \right] \leq \sum\limits_{\substack{x_1 + x_2 = \tilde{k}_n \\ 0 \leq x_1 \leq N_u - 1\\ 0\le x_2 \le N_v - 1}} \binom{N_u-1}{x_1} \binom{N_v-1}{x_2}\qqq^{(x_1+1)(x_2+1)-1}
\end{align}

By applying Lemma \ref{lem:insertioncaseB_very_sparse}, we know that there exist constants $c^b_1$ and $c^b_2$ (which depend on the \Bconst{} $\myBC$), such that if $\aK_n \rightarrow \infty$ and
\begin{align*}
\qqq \le c^b_1 \cdot \left(\frac{1}{n}\right)^{c^b_2/\aK_n} \frac{\aK_n}{N_{\max}},
\end{align*}
then the right hand side of Eqn. (\ref{eqn:expectationI_quite_sparse}) is $O(n^{-3})$. 
That is,
\begin{align}\label{eqn:caseBfinal_quite_sparse}
\text{if} ~\aK_n \rightarrow \infty &~\text{and}~ \qqq \leq ~c^b_1 \cdot \left(1 /n\right)^{c^b_2/\aK_n} (\aK_n / N_{\max}) ,\nonumber\\ &~\text{then} ~ \myprob \left[\omega_{u,v}\left(\tilde{G}_{uv}^{local}\right) \ge \aK_n / 2 \middle| \aF \right] = O(n^{-3})
\end{align}

Pick 
\begin{align*}
\aK_n = 4N_{\max} = \frac{20\log{n}}{\log{\left(\log{n} / (\sigma \theta 2^d nr^d) \right)}} = \frac{20\log{n}}{\log{\left(\log{n} / nr^d \right)}} + \text{const.}. 
\end{align*}
%\yusu{Minghao: why not change the last term to simply $\Theta(\frac{\ln{n}}{\ln{\frac{\ln{n}}{nr^d}}})$. The current one looks weird, and not very precise/correct unless we use $\pm$ instead of $+$.} \minghao{In my humble opinion, it's not weird and is crucial. Const. can also be negative.}
Note that this makes the first term of the constraint on $\qqq$ in Eqn. (\ref{eqn:caseAfinal_quite_sparse}) dominate. %\yusu{Minghao, it also makes the first term in Eqn (\ref{eqn:caseAfinal_quite_sparse}) domintes, no? So add it that there too.} \minghao{Fixed.}
Thus, combining Eqn. (\ref{eqn:caseBfinal_quite_sparse}), (\ref{eqn:caseAfinal_quite_sparse}) and (\ref{eqn:mainresult_quite_sparse}), there exist constants $C_1 = \min\{C_1^a, c_1^b\}$ and $C_2 = \max\{C^{a}_2, c^{b}_2/10\}$ such that if $\qqq$ satisfies conditions in Eqn. (\ref{eqn:type_2_qbound_quite_sparse}), then 
\begin{align*}
\myprob\left[\omega_{u,v}\left(G_n^{0,\qqq}\right) \ge \aK_n\right] \leq \myprob\left[\omega_{u,v}\left(G_n^{0,\qqq}\right) \ge \aK_n \middle| \aF\right] + \myprob[\aF^c]  = O(n^{-3})
\end{align*}

Finally, by applying the union bound, we derive that with high probability, for each of the $O(n^2)$ long-edge $(u,v)$, its edge clique number
\begin{align*}
\omega_{u,v}(G_n^{0,\qqq}) \lesssim \frac{\log{n}}{\log{\left(\log{n} / nr^d\right)}}
\end{align*}
as long as Eqn. (\ref{eqn:type_2_qbound_quite_sparse}) holds. This completes the proof of Part (a) if Theorem \ref{thm:type_2_quite_sparse}.

%\yusu{Minghao: btw, the macro ``myparagraph'' was what I used before to save space. but for this journal version, we don't need that. Please change all of them to the standard ``paragraph" macro, because this standard one does better adjustments around other environments (e.g, figures/equations). I already changed a bunch whenever I see them. }

\myparagraph{\bf Proof of part (b).~} We again try to bound the two terms on the right hand side of Eqn. (\ref{eqn:mainresult_quite_sparse}) from above separately. For case (\romannumeral 1), our result relies on the following lemma.

\begin{lemma}\label{lem:insertioncaseA_quite_sparse_looser}
There exists a constant $C_3 > 0$ depending on the \Bconst{} $\myBC$ such that if $\left(nr^d / \log n \right)^{4 / 15} \leq \qqq < 1$ and $\aK_n = C_3 \floor*{\log_{1 / \qqq}n}$, then we have
\begin{align*}
\myprob\left[ \omega_{u,v}\left(G_n^{0,\qqq}|_{\tilde{A}_{uv}}\right) \ge \aK_n / 2\middle| \aF\right] = O(n^{-3}).
\end{align*}
\end{lemma}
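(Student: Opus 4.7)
The plan is to mimic the proof of Lemma~\ref{lem:insertioncaseA_very_sparse_looser}, the only substantive change being that $N_{\max}$ is no longer a constant but grows slowly with $n$, so the choice of $C_3$ has to be calibrated against $N_{\max}$ using the hypothesis on $\qqq$. First I would start from the bound on $\myE[\aI]$ given in Eqn.~(\ref{eqn:expectationX_quite_sparse}) and repeat the ``push to the extremes'' rearrangement that maximizes $\sum_{i=1}^m x_i^2$ subject to $\sum x_i = k_n$ and $0 \le x_i \le N_{\max}$. This yields the extremal configuration with $r = \lfloor k_n / N_{\max}\rfloor$ coordinates equal to $N_{\max}$, and combined with the multinomial identity $\sum \binom{N_{\max}}{x_1}\cdots\binom{N_{\max}}{x_m} = \binom{m N_{\max}}{k_n}$ it gives the exact analog of Eqn.~(\ref{eqn:mainestimate_very_sparse_looser}):
\[
\myE[\aI] \;<\; \left(\frac{e\,m\,N_{\max}}{k_n}\,\qqq^{\,k_n/2 - 3N_{\max}/2 + 1}\right)^{k_n}.
\]

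Next I would select $C_3 = C_3(\myBC)$ large enough that $k_n := \lfloor \aK_n / (2|\Lambda|)\rfloor - 2$ satisfies both $k_n \ge 8 \log_{1/\qqq} n$ and $k_n \ge 6 N_{\max}$. The first inequality is immediate from $\aK_n = C_3 \lfloor \log_{1/\qqq} n\rfloor$ and $|\Lambda| \le \myBC^2$. For the second, the hypothesis $(nr^d/\log n)^{4/15} \le \qqq$ gives $\log(1/\qqq) \le \tfrac{4}{15}\log(\log n / nr^d)$, hence $\log_{1/\qqq} n \ge \tfrac{15}{4}\cdot \log n / \log(\log n / nr^d)$, which is at least a constant multiple of $N_{\max} = 5 \log n / \log(\log n/(\sigma\theta 2^d nr^d))$ for $n$ large (the two denominators differ only by an additive constant). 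Thus, enlarging $C_3$ by a further constant factor ensures $k_n \ge 6 N_{\max}$.

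With these two lower bounds, the exponent of $\qqq$ in the displayed estimate is at least $k_n/4$ (using $3N_{\max}/2 \le k_n/4$), so $\qqq^{k_n/2 - 3N_{\max}/2 + 1} \le \qqq^{k_n/4} \le n^{-2}$ (using $k_n/4 \ge 2 \log_{1/\qqq} n$). Since $m \le n$ and $N_{\max}/k_n \le 1$, the base inside the $k_n$-th power is at most $e \cdot n \cdot n^{-2} = e/n$, so $\myE[\aI] \le (e/n)^{k_n} = O(n^{-3})$ once $k_n \ge 4$, which is automatic for $n$ large. Markov's inequality then bounds $\myprob[\omega_{u,v}(G_n^{0,\qqq}|_{\tilde P_i}) \ge k_n+2 \mid \aF]$ by $\myE[\aI]$, and summing over the at most $\myBC^2$ indices $i \in \Lambda$ as in Eqn.~(\ref{eqn:caseAunionbound_quite_sparse}) yields the claim.

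The only real obstacle is the comparison between $\log_{1/\qqq} n$ and $N_{\max}$ under the constraint $\qqq \ge (nr^d/\log n)^{4/15}$; the exponent $4/15$ is chosen precisely so that this ratio is bounded below by a constant (after absorbing the logarithm of the constant $\sigma\theta 2^d$), so that the slack $k_n - 3 N_{\max}$ in the exponent of $\qqq$ remains proportional to $k_n$. Once this comparison is in place, the remainder of the argument is a routine adaptation of the very-sparse estimate.
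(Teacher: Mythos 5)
Your proof is correct and follows essentially the same route as the paper's: start from the same extremal-configuration bound $\myE[\aI] < \left(\frac{emN_{\max}}{k_n}\qqq^{k_n/2-3N_{\max}/2+1}\right)^{k_n}$, observe that the hypothesis $(nr^d/\log n)^{4/15}\le\qqq$ forces $\log_{1/\qqq}n$ to be bounded below by a constant multiple of $N_{\max}$ (the paper uses the chain $\log_{1/\qqq}n \ge \frac{15}{4}\log n/\log(\log n/nr^d) > \frac{3}{8}N_{\max}$ for $n$ large, identical in substance to your denominator comparison), then choose $C_3$ so that $k_n$ exceeds both a constant multiple of $\log_{1/\qqq}n$ and $6N_{\max}$, whence the exponent of $\qqq$ is at least $k_n/4$ and the base collapses to something $O(1/n)$. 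The only cosmetic difference is that the paper arranges $k_n \ge 16\log_{1/\qqq}n$ to get $\qqq^{k_n/4}\le n^{-4}$ directly, whereas you use $8\log_{1/\qqq}n$ and $\qqq^{k_n/4}\le n^{-2}$, both of which give $\myE[\aI]=O(n^{-3})$; your calibration of $C_3$ against the non-constant $N_{\max}$ matches the paper's and is the genuine content of the step.
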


\begin{proof}
By a similar argument in Appendix \ref{appendix:lem:insertioncaseA_very_sparse_looser}, we know that 
\begin{align}
\myprob\left[ \omega_{u,v}\left(G_n^{0,\qqq}|_{\tilde{A}_{uv}}\right) \ge \aK_n / 2\middle| \aF\right] \leq  \frac{1}{\sqrt{2\pi}} \left(\frac{emN_{\max}}{k_n}\qqq^{\frac{1}{2}k_n - \frac{3}{2}N_{\max} + 1} \right)^{k_n}\label{eqn:mainestimate_quite_sparse_looser}
\end{align}
where $k_n = \floor*{\aK_n / (2|\Lambda|)}-2$. Pick a constant $C_3$, which only depends on the \Bconst{} $\myBC$, such that $$\frac{C_3 \floor*{\log_{1 /\qqq}n}}{2 |\Lambda|} - 3 \geq 16 \log_{1 /\qqq}n \geq 6 N_{\max}.$$ This can be done since we have $\log n / nr^d \rightarrow \infty$ and $\left(nr^d / \log n \right)^{4 /15} \le \qqq  < 1$ and thus
\begin{align*}
\log_{1 /\qqq}n \geq \frac{15 \log n}{4 \log \left(\log n / nr^d\right)} &= \frac{3}{8} \frac{5 \log n}{ (1/2)\log \left(\log n / nr^d\right)} \\
&> \frac{3}{8} \frac{5\log n}{\log \left(\log n / (\sigma \theta 2^d nr^d) \right)} = \frac{3}{8} N_{\max}.
\end{align*}

Set $\aK_n = C_3 \floor*{\log_{1 / \qqq}n}$. Recall that $k_n = \floor*{\aK / (2 |\Lambda|)} - 2$, thus $k_n \geq 16 \log_{1 / \qqq}n \geq 6 N_{\max}$. Finally, we use Eqn. (\ref{eqn:mainestimate_very_sparse_looser_continue}) (with $k$ being replaced by $k_n$) to complete the proof. 
\end{proof}

Now pick such $C_3$ in Lemma \ref{lem:insertioncaseA_quite_sparse_looser}. We know that the following statement holds.
\begin{align}\label{eqn:caseAalmostfinal_quite_sparse_looser}
\text{If}~ &\left(nr^d / \log n \right)^{4 / 15} \leq \qqq < 1, \nonumber\\ &~~\text{then}~\myprob\left[ \omega_{u,v}\left(G_n^{0,\qqq}|_{\tilde{A}_{uv}}\right) \ge C_3\floor*{\log_{1 /\qqq}n} / 2\middle| \aF\right] = O(n^{-3}). 
\end{align}

For case (\romannumeral 2), we know that if event $\aF$ has already happened, then $|B_{uv}| \leq 2N_{\max}$. Note that if $\left(nr^d / \log n \right)^{C_3 / 42} \leq \qqq < 1$, then we have
\begin{align*}
\frac{C_3\floor*{\log_{1 /\qqq}n}}{2} &> 2\frac{5 \log n}{(1/2)\log \left(\log n / nr^d \right)} + \left(\frac{\log n}{\log \left(\log n / nr^d \right)} -\frac{C_3}{2}\right) \\
&> 2\frac{5\log{n}}{\log{\left(\log{n} / (\sigma \theta 2^d nr^d)\right)}} = 2N_{\max} \geq |B_{uv}|.
\end{align*}
Set $\xi = \min\left\{4 / 15, C_3 / 42 \right\}$. Hence, we obtain that:
\begin{align}\label{eqn:caseBalmostfinal_quite_sparse_looser}
\text{If}~ \left(nr^d / \log n \right)^{\xi} \leq \qqq < 1, ~~\text{then}~\myprob\left[ \omega_{u,v}\left(G_n^{0,\qqq}|_{B_{uv}}\right) \ge C_3\floor*{\log_{1 /\qqq}n} / 2\middle| \aF\right] = 0. 
\end{align}
Thus, combining Eqn. (\ref{eqn:caseBalmostfinal_quite_sparse_looser}), (\ref{eqn:caseAalmostfinal_quite_sparse_looser}) and (\ref{eqn:mainresult_quite_sparse}), we know that if $\left(nr^d / \log n \right)^{\xi} \leq \qqq < 1$, then with high probability, for every long-edge $(u,v)$, its edge clique number $\omega_{u,v}\left( G_n^{0,\qqq}\right) \lesssim \log_{1 / \qqq}n$. This completes the proof of Theorem \ref{thm:type_2_quite_sparse}.
%\end{proof}

%%%%%%%%%%%%%%%%%%%%%%%%%%%%%%%%%%%%%%%%%%%%%%%%%%%%%%%%%%%

%%%%%%%%%%%%%%%%%%%%%%%%%%%%%%%%%%%%%%%%%%%%%%%%%%%%%%%%%%%%%%%%%%%%%%%%%%%%%%%%%%%%%%%

\section{The missing proofs in Section \ref{sec:proof_deletion_only_RGG}}
\label{appendix:thm:deletion_only_dense}

\subsection{Proof of Part (\rom{3}) --- supercritical regime}\label{appendix:sec:deletion_only_dense}
In this section, we discuss the order of $\omega(G_n^{\pp,0})$ in the regime $\sigma n r^d / \log n \rightarrow t \in (0, \infty)$. 

\myparagraph{\bf Proof of upper bound.~} We first focus on the upper bound of $\omega\left(G_n^{\pp,0}\right)$. Let $N$ be a random variable sampled from $Poisson\left(\left(1+\delta\right)n\right)$ for some $\delta > 0$ (say $\delta = 1/2$). Note that $G_N$ is an $r-$neighborhood graph of the Poisson point process $\mathcal{P}_{(1+\delta)n}$ with intensity $(1+\delta)nf$. Completely analogously to the proof of upper bound in Section \ref{sec:deletion_only_quite_sparse}, we have
\begin{align*}
\myprob \left[ \omega\left(G_N^{\pp,0} \right) \geq k_n \right] & < \frac{1}{\sqrt{2\pi}} \left(\frac{(1+\delta)e\sigma\theta nr^d (1-\pp)^{(k_n-1)/2}}{k_n} \right)^{k_n}
\end{align*}
Finally, pick $k_n = 3\log_{1 / (1-\pp)}nr^d$. Since in the supercritical regime $\sigma n r^d / \log n \rightarrow t \in (0, \infty)$, routine calculations show that $\myprob \left[ \omega\left(G_N^{\pp,0} \right) \geq k_n \right] = o(1)$. Hence, a.s.
\begin{align*}
\omega\left(G_n^{\pp,0}\right) \lesssim \log\left(nr^d\right) 
\end{align*}

%\yusu{Minghao, is the fact that $\frac{\sigma n r^d}{\ln n} \rightarrow t \in (0, \infty)$ used anywhere here? } 

\paragraph{\bf Proof of lower bound.~} Now let us move on to the lower bound of $\omega\left(G_n^{\pp,0} \right)$. For this regime, we need slightly stronger condition on the range of $t$. That is, $\sigma nr^d \geq T \log n$ for some constant $T >0$ to be determined. 

Completely analogously to the proof of lower bound in Section \ref{sec:deletion_only_quite_sparse}, let $N$ be a random variable sampled from $Poisson\left((1-\delta')n\right)$ for some $\delta' \in (0,1)$ (say $\delta' = 1 / 2$). Set $k_n$ be an integer to be determined. Now, we have
\begin{align}
\myprob \left[ \omega\left(G_n^{\pp,0} \right) \leq k_n \right] \leq \myprob \left[ \omega\left(G_N^{\pp,0} \right) \leq k_n \right] + e^{-\gamma' n}\nonumber
\end{align}
for some constant $\gamma' > 0$ (depending on $\delta'$) by Lemma \ref{lem:chernoff_hoeffding}. Now fix some constant $\rho \in (0,1)$ (say $\rho = 1 /2$). Recall $W_{1/2} = B_{1/2}(0)$. By Lemma \ref{lem:deletion_partition_lemma}, there exist points $x_1, x_2, \cdots, x_m$ with $m = \Omega\left(r^{-d}\right) \geq 1$ such that the sets $x_i + W_{1/2}$ are disjoint and $$\nu \left(x_i + W_{1/2} \right) \geq \frac{(1-\rho) \sigma \theta}{2^d} r^d$$ for $i = 1, \cdots, m$. Let $\mathbf{X}_i$ be the set of points of $G_N$ falling in $x_i + W_{1/2}$. Then, we have
\begin{align*}
\myprob \left[ \omega\left(G_N^{\pp,0} \right) \leq k_n \right] &\leq \myprob \left[ \omega\left(G_N^{\pp,0} \mid_{\mathbf{X}_1} \right) \leq k_n, \cdots, \omega\left(G_N^{\pp,0} \mid_{\mathbf{X}_m} \right) \leq k_n \right]\\
& = \prod\limits_{i=1}^{m} \myprob \left[ \omega\left(G_N^{\pp,0} \mid_{\mathbf{X}_i} \right) \leq k_n\right]
\end{align*}
Easy to see that all the points falling in any $r/2$-ball span a clique in $G_N$. Thus, for each $i$, we have 
\begin{align*}
\myprob \left[ \omega\left(G_N^{\pp,0} \mid_{\mathbf{X}_i} \right) \leq k_n\right] = \myprob \left[ \omega\left(G\left(|\mathbf{X}_i|, 1-\pp\right) \right) \leq k_n\right].
\end{align*}
Set 
\begin{align*}
\Phi_n := \frac{(1-\rho)(1-\delta')\sigma \theta nr^d}{2^{d+1}}
\end{align*}
which goes to infinity as $n$ grows. Note that $|\mathbf{X}_i| \sim Poisson(\tilde{\lambda})$ where $\tilde{\lambda} := (1-\delta')n \cdot \nu(x_i + W_{1/2}) \geq 2 \Phi_n$ (see Eqn. (\ref{eqn:lambdabound})). %\yusu{Minghao: I added a ref to Eqn (\ref{eqn:lambdabound}). Also, should we change $\lambda$ to $\tilde{\lambda}$ there to be consistent with the main text too? } 
Now pick $k_n := \floor*{\log_{1 / (1-\pp)}\Phi_n} = \Omega\left( \log (nr^d)\right)$. By the law of total probability, we have
\begin{align}
&\myprob \left[ \omega\left(G\left(|\mathbf{X}_i|, 1-\pp\right) \right) \leq k_n\right] \nonumber\\
\leq& \myprob\left[|\mathbf{X}_i| \leq \Phi_n \right] + \sum\limits_{j = \ceil*{\Phi}}^{\infty}\myprob \left[ \omega\left(G\left(j, 1-\pp\right) \right) \leq k_n\right] \myprob\left[|\mathbf{X}_i| = j\right]\nonumber\\
 \leq& \myprob\left[|\mathbf{X}_i| \leq \frac{\tilde{\lambda}}{2} \right] + \sum\limits_{j = \ceil*{\Phi}}^{\infty}\myprob \left[ \omega\left(G\left(j, 1-\pp\right) \right) \leq \floor*{\log_{\frac{1}{1-\pp}}j}\right] \frac{e^{-\tilde{\lambda}} \tilde{\lambda}^j}{j!}\nonumber\\
 <& e^{-\frac{1}{10}\tilde{\lambda}} + \sum\limits_{j = \ceil*{\Phi}}^{\infty} e^{-j} \frac{e^{-\tilde{\lambda}} \tilde{\lambda}^j}{j!}\label{eqn:deletion_only_lower_bound_law_of_total}\\
<& e^{-\frac{1}{10}\tilde{\lambda}} + e^{-\tilde{\lambda}}\sum\limits_{j = 0}^{\infty} \frac{ \left(\tilde{\lambda} / e\right)^j}{j!} \nonumber\\
=& e^{-\frac{1}{10}\tilde{\lambda}} + e^{-\tilde{\lambda}} \cdot e^{\tilde{\lambda} /e} \nonumber\\
<& 2e^{-\frac{1}{10}\tilde{\lambda}}\nonumber  
\end{align}
Inequality (\ref{eqn:deletion_only_lower_bound_law_of_total}) holds due to Lemma \ref{lem:chernoff_hoeffding} and Lemma \ref{lem:ER_lowerbound_well-known}. Now set
\begin{align*}
T := \frac{10 \cdot 2^d}{(1-\rho)(1-\delta') \theta}.
\end{align*}
Note that $\sigma nr^d \geq T \log n$. Then 
\begin{align*}
e^{-\frac{1}{10}\tilde{\lambda}} \leq e^{-\frac{(1-\rho)(1-\delta')\theta}{10 \cdot 2^d}(\sigma nr^d)} \leq e^{-\log n} = n^{-1}
\end{align*}
It follows that $\myprob \left[ \omega\left(G_n^{\pp,0} \right) \leq k_n \right] = o(1)$ with $k_n = \Omega\left( \log (nr^d)\right)$, which concludes the proof of part (\rom{3}) of Theorem \ref{thm:deletion_only_RGG}.

%%%%%%%%%%%%%%%%%%%%%%%%%%%%%%%%%%%%%%%%%%%%%%%%%%%%%%%%%%%%%%%%%%%%%%%%%%%%%%%%%%%%%%%%%%%%%%%%%%%%%%%%%%%%%%%%%%%%%%%%%%%%%%%%%%

\end{document}